\newtheorem{theorem}{Theorem}[section]
\newtheorem{proposition}[theorem]{Proposition}
\newtheorem{lemma}[theorem]{Lemma}
\newtheorem*{question*}{Question}
\theoremstyle{definition}
\newtheorem{definition}[theorem]{Definition}
\newtheorem*{definition*}{Definition}
\theoremstyle{remark}
\newtheorem{remark}[theorem]{Remark}
\newcommand{\eps}{\varepsilon}
\newcommand{\EE}{\mathbb{E}}
\newcommand{\CC}{\mathbb{C}}
\newcommand{\RR}{\mathbb{R}}
\newcommand{\NN}{\mathbb{N}}
\newcommand{\FF}{\mathbb{F}}
\newcommand{\TT}{\mathbb{T}}
\newcommand{\ZZ}{\mathbb{Z}}
\newcommand{\cH}{\mathcal{H}}
\newcommand{\cC}{\mathcal{C}}
\newcommand{\cL}{\mathcal{L}}
\def\dH{\dim_{\mathcal{H}}}
\title{Polynomial Szemerédi for sets with large Hausdorff dimension on the Torus}
\author{Guo-Dong Hong}
\address{Department of Mathematics, California Institute of Technology, Pasadena, CA 91125, USA}
\email{ghong@caltech.edu}
\begin{document}
\maketitle

\begin{abstract}
Let $\mathcal{P}= \{P_1, \cdots, P_{k}\in \RR[y]\}$ be a collection of polynomials with distinct degrees and zero constant terms. We proved that there exists $\epsilon=\epsilon(\mathcal{P})>0$ such that, for any compact set $E \subset \TT$ with $\dH(E)>1-\epsilon$, we can find $y\neq 0$ so that $\{x,x+P_1(y), \cdots,x+P_k(y)\} \subset E$. The proof relies on a suitable version of the Sobolev smoothing inequality with ideas adapted from Peluse \cite{P19}, Durcik and Roos \cite{DR24}, and Krause, Mirek, Peluse, and Wright \cite{KMPW24}. As a byproduct of our Sobolev smoothing inequality, we demonstrated that the divergence set of the pointwise convergence problem for certain polynomial multiple ergodic averages has Hausdorff dimension strictly less than one.
\end{abstract}

\section{Introduction}

\subsection{Polynomial Szemerédi theorem}
A central topic in Ramsey theory is to find patterns for objects of large size. One particular instance that attracts considerable interest is the celebrated Szemerédi theorem \cite{S75}, which states that any subset of $\ZZ$ with positive upper density must contain arbitrarily long arithmetic progressions. Later on, Bergelson and Leibman \cite{BL96} generalized Szemerédi's theorem to more general polynomial progressions. To be more precise, let $\mathcal{P}= \{P_1, \cdots, P_{k} \in \ZZ[y]\}$ be a collection of polynomials with zero constant terms. Bergelson and Leibman proved that any subset of $\NN$ with positive upper density must contain the following progressions:
\[
x,x+P_1(y), \cdots,x+P_k(y)
\]
for some $y \neq 0$.

The purpose of this paper is to address a related problem in the Euclidean setting. In this context, we say that a subset of $\RR$ has a large size if it has a positive Lebesgue measure. It is well known that, by the Lebesgue density theorem, any subset of $\RR$ with positive Lebesgue measure must contain arbitrarily long polynomial progressions with zero constant terms, which can be regarded as a continuous version of Bergelson and Leibman's theorem. A natural follow-up question is whether a similar result can be proven for sparser sets, that is, measure-zero sets. In this circumstance, we say that a measure-zero set has a large size if it has a large Hausdorff dimension. However, a construction due to Keleti \cite{K99} shows that there exists a measure-zero subset of $\RR$ with Hausdorff dimension one that does not contain any three-term arithmetic progression. Therefore, extending Bergelson and Leibman's theorem in this context is generally impossible.

In light of Keleti's construction, Laba and Pramanik \cite{LP09} proposed a Fourier decay condition, which is motivated by the pseudorandomness notion in the discrete setting, together with the large Hausdorff dimension condition on the subset of $\RR$ to ensure the existence of three-term arithmetic progressions. Afterwards, Fraser, Guo, and Pramanik \cite{FGP22} proved the existence of the following nontrivial three-term polynomial progressions
\[x,x+y,x+P(y),\]
where $P\in \RR[y]$ is a polynomial with degree at least two with no constant terms, under similar conditions to those in Laba and Pramanik. And this result was extended to the three-term polynomial progressions 
\[x,x+P_1(y),x+P_2(y),\]
where $P_1, P_2\in \RR[y]$ are linearly independent polynomials with zero constant terms by Chen, Guo, and Li \cite{CGL21} later. However, it is unclear in these papers whether thisFourier decay condition is necessary to ensure the existence of the nonlinear patterns.

Surprisingly, Kuca-Orponen-Sahlsten \cite{KOS23} showed that having a large Hausdorff dimension is the only necessary assumption to guarantee the existence of certain nonlinear patterns. More specifically, they proved that any compact subset $E \subset \RR^2$ with large Hausdorff dimension must contain
\[\underline{x}, \underline{x}+(y,y^2)\]
for some $y \neq 0$. Their results demonstrated the distinction between the linear patterns and the nonlinear patterns. By adapting their method, Bruce-Pramanik \cite{BP23} studied more general two-term nonlinear patterns for subsets in higher dimensions, and Zhu \cite{z24} proved the existence of the quadratic Roth configuration
\[
x,x+y,x+y^2
\]
for subsets of $\RR$ only under having a large Hausdorff dimension. See also Krause \cite{K19} for related results. 
It remains an interesting question to determine whether one can extend their results to identify longer nonlinear patterns in this context, which may be a challenging problem.

The first main result in this paper is the first step toward answering the question above.

\begin{theorem} \label{Thm_Progression}
    Let $\mathcal{P}= \{P_1, \cdots, P_{k}\in \RR[y]\}$ be a collection of polynomials with distinct degrees and zero constant terms. Then there exists $\epsilon=\epsilon(\mathcal{P})>0$ such that the following holds. Let $E \subset \TT$ be any compact set with Hausdorff dimension $\dH(E)>1-\epsilon$. Then there exists $y\neq 0$ so that $\{x,x+P_1(y), \cdots,x+P_k(y)\} \subset E$.
\end{theorem}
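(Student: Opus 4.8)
The plan is to argue by a density-increment / counting strategy at the level of a suitably regularized fractal measure supported on $E$. First I would replace the compact set $E$ with Hausdorff dimension $\dH(E) > 1-\epsilon$ by a Frostman-type probability measure $\mu$ on $E$ satisfying $\mu(I) \lesssim |I|^{1-\epsilon'}$ for all intervals $I$, where $\epsilon' \to 0$ as $\epsilon \to 0$; this is standard via Frostman's lemma, and the goal becomes to show that the multilinear counting form
\[
\Lambda(\mu) = \int \prod_{j=0}^{k} \mu\bigl(x + P_j(y)\bigr)\, \psi(y)\, dx\, dy
\]
(with $P_0 \equiv 0$, $\psi$ a fixed bump supported away from $0$, and $\mu$ mollified at a tiny scale $\delta$ so the integrand makes sense) is bounded below, uniformly as $\delta \to 0$. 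A positive lower bound, together with a standard argument removing the contribution of the mollification and of $y$ near $0$, forces the existence of $x$ and $y \neq 0$ with $x + P_j(y) \in E$ for all $j$.

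The heart of the matter is to control $\Lambda(\mu)$ from below. I would split $\mu = \mu * \phi_R + (\mu - \mu*\phi_R)$ at a medium scale $R$, where $\phi_R$ is an approximate identity. The "smooth part" $\mu * \phi_R$ has density bounded below on a large-measure set (again by Frostman regularity), and for it the counting form is comparable to the corresponding integral for Lebesgue-type densities, which is positive by the continuous Bergelson–Leibman phenomenon (here one uses crucially that the $P_j$ have \emph{distinct degrees}, so the polynomial map $y \mapsto (P_1(y),\dots,P_k(y))$ is genuinely curved and no degeneracy occurs). The "rough part" must be shown to contribute negligibly, and this is exactly where the Sobolev smoothing inequality enters: one estimates the multilinear form in which at least one factor is replaced by $\mu - \mu*\phi_R$ by a negative Sobolev norm of that factor, which is small because of the Fourier/derivative gain coming from the curvature of the $P_j$'s. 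This degree-lowering / PET-induction machinery — iterating van der Corput / Weyl-type inequalities to trade one polynomial factor for a box-type average and ultimately for a genuine smoothing estimate — is the technical engine adapted from Peluse \cite{P19}, Durcik–Roos \cite{DR24}, and Krause–Mirek–Peluse–Wright \cite{KMPW24}, and I would invoke it in the form of the Sobolev smoothing inequality proved in the body of the paper.

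Concretely, the steps in order are: (1) pass to a Frostman measure $\mu$ and mollify at scale $\delta$; (2) reduce Theorem \ref{Thm_Progression} to a uniform-in-$\delta$ lower bound on $\Lambda(\mu)$; (3) perform the two-scale decomposition $\mu = \mu*\phi_R + (\mu - \mu*\phi_R)$ and handle the smooth main term via the continuous polynomial Szemerédi estimate, obtaining a lower bound of size $\gtrsim_{\cP} 1$ depending only on the Frostman constant; (4) bound each error term, in which some factor carries $\mu - \mu*\phi_R$, by the Sobolev smoothing inequality, gaining a power $R^{-c}$ times a controlled power of the Frostman exponent — here choosing $\epsilon$ small enough (hence $\epsilon'$ small) and $R$ large enough makes the total error smaller than half the main term; (5) conclude $\Lambda(\mu) \gtrsim_{\cP} 1 > 0$ and extract the progression. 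The main obstacle is step (4): making the Sobolev smoothing inequality applicable to a bare Frostman measure rather than a bounded function requires carefully interpolating between the $L^\infty$-type smoothing bound and the $L^1$ mass bound on $\mu$, and tracking how the loss in the smoothing exponent competes with the $\epsilon'$-loss in Frostman regularity; getting these two small parameters to cooperate — so that the admissible $\epsilon(\mathcal{P})$ is genuinely positive — is the delicate quantitative point, and it is precisely what forces the "distinct degrees" hypothesis, since otherwise the degree-lowering induction stalls and no smoothing gain is available.
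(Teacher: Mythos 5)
Your proposal is in the right spirit (Frostman measure, counting form, Sobolev smoothing), but it follows the Euclidean-style \emph{two-scale} strategy rather than what the paper actually does, and this creates a real gap.

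The paper's argument on $\TT$ never splits $\mu$ into a smooth part $\mu*\phi_R$ and a rough part $\mu - \mu*\phi_R$ and then plays them off against each other. Instead it proves (Theorem \ref{Thm_Counting}, Proposition \ref{prop_counting_cutoff}) that for Frostman measures
\[
\Lambda_{\mathcal{P};N}(\chi;\mu,\dots,\mu) \;=\; \Big(\int_0^1\chi\Big)\Big(\int_{\TT}\mu\Big)^{k+1} \;+\; O_{\mathcal{P},\chi}\!\Big(N^{-C}\,\|\mu\|_{\cH^{-\epsilon}(\TT)}^{(k+1)c}\Big),
\]
and the ``competition'' is between the fixed positive main term $\big(\int\chi\big)\big(\int\mu\big)^{k+1}$ and an error that decays in $N$; one simply takes $N$ large. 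The Littlewood--Paley decomposition in the paper (together with Lemma \ref{Upperbound}, which controls $\|\Pi_j\mu\|_{\cL^\infty(\TT)}\lesssim 2^{j(1-s+\tau)}$) is a purely internal device used to upgrade the $1$-bounded Sobolev inequality (Theorem \ref{Thm_Sobolev}, Proposition \ref{Prop_Sobolev}) to unbounded Frostman inputs; it is not a ``main term versus rough remainder'' decomposition. This is the ``averaging effect'' of the torus mentioned in the introduction: the common difference $y$ is taken large, which is what makes the main term so clean.

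The concrete gap in your step (3): you assert that the smooth part $\mu*\phi_R$ contributes a main term $\gtrsim_{\mathcal{P}}1$ ``by the continuous Bergelson--Leibman phenomenon.'' That is not automatic here. The density $\mu*\phi_R$ only satisfies $\int_{\TT}\mu*\phi_R = \mu(\TT)$ and $\|\mu*\phi_R\|_{\cL^\infty}\lesssim R^{1-s}\log R$; it is not bounded below on a set of positive measure, and the Lebesgue-density-point argument that makes this step work in $\RR$ (as in Kuca--Orponen--Sahlsten and its descendants) does not apply to a spread-out measure on $\TT$. In fact, proving positivity of $\Lambda(\mu*\phi_R,\dots,\mu*\phi_R)$ on $\TT$ would itself naturally route through the $N\to\infty$ Sobolev asymptotic (since then it equals $(\int\mu*\phi_R)^{k+1}+o(1)=(\int\mu)^{k+1}+o(1)$), at which point your two-scale decomposition becomes redundant. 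Additionally, your step (4) bounds the rough error by a power $R^{-c}$ at a \emph{fixed} averaging scale; but Theorem \ref{Thm_Sobolev} at fixed $N$ gives only $O(1)$ times a negative Sobolev norm, which does not go to zero unless you also send $N\to\infty$ — you have implicitly dropped the $N^{-C}$ gain that is the actual engine of the paper's argument. Finally, the paper's extraction of a \emph{nontrivial} progression (Proposition \ref{measure for the progression} and the Riesz representation argument) is also a nontrivial step that your sketch compresses into ``a standard argument removing the contribution of the mollification and of $y$ near $0$''; the paper handles it by building a nonnegative measure $\nu(\mu_E)$ supported on the configuration set, which requires the smooth-cutoff asymptotic with frequency twists $e(l_1x+\tfrac{l_2}{N}y)$ and a density argument in Fourier series.

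In short: keep the Frostman reduction, the need for an unbounded Sobolev inequality (interpolating $\|\cdot\|_{\cL^\infty}$ and $\|\cdot\|_{\cH^{-\sigma}}$ as you anticipated), and the configuration-measure extraction — but replace the two-scale ``main term vs.\ rough remainder'' decomposition by the torus-averaging strategy: first prove the Sobolev asymptotic for $1$-bounded functions, then use Littlewood--Paley plus a pointwise $\cL^\infty$ bound on each dyadic block of a Frostman measure to upgrade to Frostman inputs, then win by taking $N$ large, not $R$ large.
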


Another way to rephrase Theorem \ref{Thm_Progression} is that any periodic subset of $\RR$ with large Hausdorff dimension must contain certain nontrivial polynomial progressions. In this sense, this result is a simpler problem since we have an additional periodic assumption. According to our proof, the common difference $y$ is comparatively large, which contrasts with all previous results in the literature. The main distinction is that we take advantage of the averaging effect in this periodic setting, which has a stronger consequence, as we will discuss in the next subsection.

\subsection{Sobolev smoothing inequality}
To show the existence of the polynomial progression, it is common to study the following counting operator:
\[
\Lambda_{\mathcal{P};N}(f_0,...,f_k):=  \frac{1}{N} \int_0^N \int_{\TT}f_0(x) \prod_{i=1}^k f_i( x+ P_i(y)) \, dx \,dy,
\]
where $\mathcal{P}= \{P_1, \cdots, P_{k}\in \RR[y]\}$ is a collection of polynomials. The most essential part of this paper is the following version of the Sobolev smoothing inequality.

\begin{theorem} \label{Thm_Sobolev}
    Let $k \in \NN$, $N \geq 1$, and let $\mathcal{P}= \{P_1, \cdots, P_{k}\in \RR[y]\}$ be a collection of polynomials with distinct degrees and zero constant terms.
    Then there exist $\sigma=\sigma(\mathcal{P})>0$ and $C=C(\mathcal{P}), c=c(\mathcal{P})>0$ such that
        \[
        \left|\Lambda_{\mathcal{P};N}(f_0,...,f_k) -\prod_{i=0}^k \int_{\TT} f_i \right| \lesssim_{\mathcal{P}}
        N^{-C} \cdot \min_{i=0,\cdots, k}  \|f_i\|^{c}_{\cH^{-\sigma}(\TT)} ,
        \]
        for any 1-bounded measurable functions $f_0, \cdots, f_k $.
\end{theorem}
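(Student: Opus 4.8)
The plan is to prove the statement by a degree-lowering induction (a "PET-style" / van der Corput scheme combined with Peluse's inverse-theorem strategy), reducing the general polynomial average to an average where only the top-degree behaviour survives, and then exploiting oscillation on $\TT$ to get the power-saving gain. First I would normalize: after subtracting the mean we may assume $\int_\TT f_i = 0$ for the index $i$ achieving the minimum, and by the triangle inequality and the 1-boundedness of the remaining $f_j$'s it suffices to bound $\Lambda_{\mathcal P;N}(f_0,\dots,f_k)$ itself when $f_i$ has zero mean, with a bound of the shape $N^{-C}\|f_i\|_{\cH^{-\sigma}}^{c}$. The core tool is a van der Corput / Cauchy--Schwarz step in the $y$-variable: squaring $\Lambda_{\mathcal P;N}$ and shifting $y\mapsto y+h$ for a small averaging parameter $|h|\le H$ replaces the collection $\mathcal P$ by the "derived" collection $\{P_i(y+h)-P_1(y+h)\}_{i\ge 2}\cup\{P_i(y)-P_1(y)\}_{i\ge 2}$ (after also shifting $x$), which has strictly smaller complexity; iterating this finitely many times (PET induction on the weight vector of $\mathcal P$) brings us to a single linear phase in $y$ of the form $x\mapsto x + c\,y^{d}$ times bounded functions, where $d$ is the top degree. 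One must be careful to track how the $\cH^{-\sigma}$ norm of the distinguished function degrades at each step — this is where the exponents $C,c,\sigma$ get determined, and where one invokes the Sobolev/Bessel-potential interpolation so that an $L^2$-type gain at the bottom of the induction can be upgraded to an $\cH^{-\sigma}$ gain with a (smaller) power of $N$.

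At the base case we are left with estimating, for the top-degree monomial $a y^{d}$ of the appropriate iterated polynomial, an expression like
\[
\frac{1}{N}\int_0^N \int_\TT g_0(x)\, g_1(x + a y^{d})\, dx\, dy
\]
with $g_1$ of zero mean (the other factors absorbed, still 1-bounded). Expanding $g_1$ in a Fourier series on $\TT$ and performing the $y$-integral produces Weyl-type exponential sums $\frac1N\int_0^N e(a n y^{d})\,dy$, which for each nonzero frequency $n$ decay like $\min(1,(N^{d}|an|)^{-1/d})$ by van der Corput's lemma for oscillatory integrals; pairing this decay against the frequency-weighted Fourier coefficients of $g_1$ yields a bound of the form $N^{-C}\|g_1\|_{\cH^{-\sigma}}$ for suitable small $C,\sigma>0$. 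This is precisely the "Sobolev smoothing" phenomenon: the averaging in $y$ over the long interval $[0,N]$ kills every nonzero frequency with a power of $N$, and the negative-order Sobolev norm is exactly strong enough to control the low frequencies that decay slowly.

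The main obstacle is bookkeeping the PET induction quantitatively: each van der Corput step costs a square root (so $c$ shrinks geometrically in the number of steps, which is bounded by a function of $\mathcal P$ alone, hence fine), but it also introduces the new averaging scale $H$ that must be optimized against $N$, and — more delicately — after the induction the distinguished function no longer appears "bare" but convolved/multiplied against shifts of the other (bounded, but not mean-zero) functions. Handling this requires an additional \emph{inverse theorem} in the spirit of Peluse \cite{P19} and Durcik--Roos \cite{DR24}: if the iterated average is large, then the distinguished function correlates with a structured (low-complexity, essentially single-frequency) object, and one then runs a separate, easier estimate for such structured functions, exploiting the torus oscillation as above. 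Interfacing the qualitative inverse theorem with the quantitative $\cH^{-\sigma}$ bound — i.e. extracting a genuine power of $N$ rather than merely $o(1)$ — is the technical heart, and is where the ideas of \cite{KMPW24} on quantitative polynomial averages are adapted to the Bessel-potential setting on $\TT$.
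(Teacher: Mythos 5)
Your high-level plan is in the right spirit — PET/van der Corput to get Gowers-type control, Peluse-style degree lowering, and an oscillatory-integral base case — and you correctly identify that the "technical heart" is turning the qualitative inverse theorem into a genuine $N^{-C}\|f\|^c_{\cH^{-\sigma}}$ bound. But as written the proposal has several genuine gaps that prevent it from being a proof.

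First, the claim that PET induction "brings us to a single linear phase in $y$ of the form $x\mapsto x+cy^{d}$" mischaracterizes what PET delivers. In the paper (Theorems~\ref{Thm_PET_Box} and~\ref{Thm_PET_Gowers}) PET outputs a Gowers $U^s$-box-norm bound for the top-degree function $f_k$, not a reduction to a single phase; the reduction to exponential phases $e(\xi x)$ only happens once the degree-lowering has pushed the Gowers degree down to $U^2$, where \eqref{U^2-inverse theorem} gives a frequency. Second, the mechanism by which the degree-lowering produces an $\cH^{-\sigma}$ gain — rather than the $L^\infty$-type error term one gets in $\FF_p$ — is entirely missing: it is the combination of Peluse's dual-difference interchange (Lemma~\ref{Lma_Dual Difference Interchange}) with the crucial estimate $\int_{\TT^s}\|\triangle_{h_1,\dots,h_s}f\|^2_{\cH^{-\sigma}}\,dh\lesssim\|f\|^c_{U^{s+1}}$ (Lemma~\ref{Lma_PJ}), which is precisely what lets one keep an $\cH^{-\sigma}$ quantity alive through the iteration; your sketch has no substitute for this. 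Third, the argument must be an explicit induction on $k$: to estimate the mean of the dual function $F^i$ one applies the Theorem for the $(k-1)$-tuple $\mathcal P\setminus\{P_i\}$, and to estimate the correlation of $F^i$ with an exponential $e(\xi_i x)$ one needs the "intermediate induction" (Proposition~\ref{Prop_Intermediate induction step}), which is its own downward induction on the number of exponential inputs with a nontrivial linear-algebra base case (Lemma~\ref{Lma_Intermediate induction step}: a coefficient matrix is invertible, giving $|\sum\xi_i|\lesssim_{\mathcal P}\max_j|\sum_i a_{ji}\xi_i|$ so van der Corput applies). None of that scaffolding appears in your sketch, and without it the claim "pairing this decay against the frequency-weighted Fourier coefficients" cannot be executed, because after one dualization the object being paired is no longer a bare 1-bounded function but a dual average of products. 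Finally, your normalization to "$\int_\TT f_i=0$ for the index achieving the minimum" does not by itself reduce the problem: the paper instead splits each $f_i=(f_i-\int f_i)+\int f_i$ and uses the induction hypothesis for $k-1$ polynomials on the constant-term contribution, which is where the $\prod\int f_i$ in the conclusion comes from.
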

The Sobolev smoothing inequality was introduced by Bourgain \cite{B88} in a different context, where he utilized it to obtain a quantitative count of the quadratic Roth configuration
\[
x,x+y,x+y^2.
\]
for subsets of $\RR$ with positive Lebesgue measure.
Subsequently, the Sobolev smoothing inequality corresponding to more general three-term polynomial progressions was extended by Durcik, Guo, and Roos \cite{DGR19} for 
\[x,x+y,x+P(y),\]
where $P\in \RR[y]$ is a polynomial with degree at least two with no constant terms, and by Chen, Guo, and Li \cite{CGL21} for
\[x,x+P_1(y),x+P_2(y),\]
where $P_1, P_2\in \RR[y]$ are polynomials with distinct degrees and zero constant terms. Until recently, Krause, Mirek, Peluse, and Wright \cite{KMPW24} significantly generalized the Sobolev smoothing inequality corresponding to longer-term polynomial progressions
\[
x,x+P_1(y), \cdots,x+P_k(y),
\]
where $\mathcal{P}= \{P_1, \cdots, P_{k}\in \RR[y]\}$ is a collection of polynomials with distinct degrees and zero constant terms. We can regard Theorem \ref{Thm_Sobolev} as a strengthened version of the Sobolev smoothing inequality obtained in \cite{KMPW24} under the extra periodic assumption.

The work in \cite{KMPW24} was inspired by recent advances of the polynomial Szemerédi theorem in the finite field setting. Bourgain and Chang \cite{BC17} initiated the study of the nonlinear Roth theorem in the finite field setting, and they proved the asymptotic formulas of the quantitative count for the following three-term nonlinear progressions
\[
x,x+y,x+y^2 \quad \text{and} \quad x,x+y,x+1/y.
\]
Their results were later independently extended to more general three-term polynomial progressions by Dong, Li, and Sawin \cite{DLS20}, and by Peluse \cite{P18}; and more general three-term rational function progressions by the author and Lim \cite{HL25}. In \cite{P19}, Peluse made a breakthrough by introducing a new method, now known as the degree-lowering method, to deal with longer-term polynomial progressions.
Consider the counting operators for $\mathcal{P}= \{P_1, \cdots, P_{k}\}$ in the finite field setting as follows
\[\Lambda_{\mathcal{P}}( f_0,...,f_{k})=
    \EE_{x,y \in \FF_p}f_0(x) \prod_{i=1}^k f_i( x+ P_i(y)),\]
where $\EE_{x \in \FF_p}$ denotes $\frac{1}{p} \sum_{x \in \FF_p}$. Peluse \cite{P19} proved that given any collection of linearly independent polynomials with zero constant terms, denoted by $\mathcal{P}= \{P_1, \cdots, P_{k}\in \ZZ[t]\}$, there exists $C=C(\mathcal{P})>0$ such that
\begin{equation} \label{sarah's theorem}
    \Lambda_{\mathcal{P}}( f_0,...,f_{k})= \prod_{i=0}^k \EE f_i +O_{\mathcal{P}}(p^{-c}),
\end{equation}
for any 1-bounded functions $f_0, \cdots, f_k $. Theorem \ref{Thm_Sobolev} and the Sobolev smoothing inequality obtained in \cite{KMPW24} can be viewed as a generalization of Peluse's theorem \eqref{sarah's theorem} in the continuous setting. We also refer to Durcik and Roos \cite{DR24}, which provided a new proof of Bourgain's Sobolev smoothing inequality, adapting the techniques from Peluse \cite{P19}. Ideas in \cite{P19}, \cite{KMPW24}, and \cite{DR24} play important parts in the proof of Theorem \ref{Thm_Sobolev}.

When we choose $f_i$ to be the characteristic function of a measurable set $E \subset\TT$, Theorem \ref{Thm_Sobolev} immediately implies the quantitative count of the polynomial progressions in $E$, when $N$ tends to infinity: this averaging effect is the main distinction with the previous setting in $\RR$, as in Bourgain \cite{B88}, so that we can deduce the existence of a nontrivial polynomial progression in a comparatively more straightforward manner.
However, to study the problem of the fractal set, we need to remove the 1-boundedness assumption. In general, we do not know if Theorem \ref{Thm_Sobolev} holds for functions in the negative Sobolev space. But we can find a suitable version of the Sobolev smoothing inequality, which is Theorem \ref{Thm_Counting} below, that has implications for Theorem \ref{Thm_Progression}.
We believe that formulating a proper Sobolev smoothing inequality in this setting and overcoming the difficulties associated with this generalization are the main novelties of this paper.

\subsection{Divergence set of the pointwise convergence problem}
Another perspective to look at the polynomial Szemerédi theorem is through examining the following multiple ergodic averages:
\begin{equation} \label{multiple ergodic average}
    \mathcal{A}_{\mathcal{P};N}(f_1,...,f_k)(x):= 
\frac{1}{N} \int_0^N \prod_{i=1}^{k} f_i(x+P_i(y)) \,dy.
\end{equation}
where $\mathcal{P}= \{P_1, \cdots, P_{k}\in \RR[y]\}$ is a collection of polynomials.
 The final result in this paper addresses the pointwise convergence problem for \eqref{multiple ergodic average}. It turns out that, in the current setting, we can prove a stronger pointwise convergence result.

\begin{theorem} \label{Thm_Convergence}
Let $k \in \NN$ and $\mathcal{P}= \{P_1, \cdots, P_{k}\in \RR[y]\}$ be a collection of polynomials with distinct degrees and zero constant terms.
    Then there exists $\epsilon=\epsilon(\mathcal{P})>0$ such that
    \[
    \dH \{x \in \TT: \lim_{N \to\infty}\mathcal{A}_{\mathcal{P};N}(f_1,...,f_k)(x) \neq \prod_{i=1}^k \int_{\TT} f_i\} \leq 1-\epsilon,
    \]
    for any $f_1, \cdots, f_k \in \cL^{\infty}(\TT)$.
\end{theorem}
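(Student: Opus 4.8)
The plan is to derive the bound on the Hausdorff dimension of the divergence set from a quantitative version of the Sobolev smoothing inequality of Theorem \ref{Thm_Sobolev}, following the standard route from an $L^2$-type maximal/oscillation estimate to an almost-everywhere statement, but upgraded so that the exceptional set is small not just in measure but in dimension. Fix $1$-bounded $f_1,\dots,f_k$ (we may normalize $\|f_i\|_{\cL^\infty}\le 1$ and, replacing $f_i$ by $f_i-\int f_i$, reduce to the case where some factor has mean zero). First I would establish the existence of the limit $\lim_{N\to\infty}\mathcal{A}_{\mathcal{P};N}$ almost everywhere with value $\prod_i\int f_i$: this is essentially known for polynomial multiple ergodic averages, but here one can get it cheaply by combining Theorem \ref{Thm_Sobolev} along a lacunary sequence $N_j=2^j$ (to control $\sum_j \|\mathcal{A}_{\mathcal{P};N_j}-\prod\int f_i\|_{L^2}^2<\infty$ via the $N^{-C}$ gain) with a short argument bridging $N\in[N_j,N_{j+1}]$. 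The point is that the divergence set is therefore contained in the set where this lacunary convergence fails, and \emph{that} set I would control by dimension.

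The heart of the matter is the following: I want to show that for every $\delta>0$ there is $\sigma>0$ so that the $\delta$-dimensional Hausdorff content (or an appropriate Frostman-type capacity) of
\[
E_\lambda := \Big\{x\in\TT:\ \sup_{N\ge 1}\big|\mathcal{A}_{\mathcal{P};N}(f_1,\dots,f_k)(x)-\textstyle\prod_i\int f_i\big|>\lambda\Big\}
\]
is small. Equivalently, by Frostman's lemma, it suffices to show that for any probability measure $\mu$ on $E_\lambda$ with finite energy $I_s(\mu)=\iint |x-y|^{-s}\,d\mu(x)d\mu(y)<\infty$ for some $s>1-\epsilon$, one gets a contradiction (or a bound forcing $s\le 1-\epsilon$). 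The mechanism: such a measure $\mu$ has Fourier dimension-type decay on average, so $\|\mu\|_{\cH^{-\sigma}(\TT)}$ is controlled when $\sigma$ is chosen in terms of $s$; then pairing against $\mathcal{A}_{\mathcal{P};N}-\prod\int f_i$ and invoking Theorem \ref{Thm_Sobolev} (in its dual/localized form, which is what Theorem \ref{Thm_Counting} is advertised to supply) gives a power gain in $N$ against a fixed power of $\|\mu\|_{\cH^{-\sigma}}$; summing over a lacunary set of scales $N$ and running the standard maximal-function-to-weak-type conversion then shows $\int_{E_\lambda} d\mu \lesssim \lambda^{-c}\cdot(\text{small})$, which is incompatible with $\mu$ being a probability measure supported on $E_\lambda$ unless $E_\lambda$ is already dimensionally small. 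Taking a countable intersection over $\lambda=1/m$ bounds the dimension of the full divergence set by $1-\epsilon$.

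The main obstacle — and the reason Theorem \ref{Thm_Sobolev} as literally stated is not quite enough — is that a Frostman measure $\mu$ is emphatically \emph{not} $1$-bounded, so one cannot feed it directly into Theorem \ref{Thm_Sobolev}. This is exactly the gap the paper flags ("we need to remove the 1-boundedness assumption \dots we do not know if Theorem \ref{Thm_Sobolev} holds for functions in the negative Sobolev space"), and the resolution must come from the promised Theorem \ref{Thm_Counting}: I would use it to replace the counting operator involving $\mu$ by one where $\mu$ has been mollified at scale $\sim N^{-\kappa}$, absorbing the error into the Hausdorff content via the energy bound on $\mu$, and only then apply the $1$-bounded smoothing estimate to the mollified object. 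Balancing the mollification scale against the $N^{-C}$ gain in Theorem \ref{Thm_Sobolev} and against the loss incurred in passing from $\|\mu * \phi_{N^{-\kappa}}\|_{\cH^{-\sigma}}$ back to the energy $I_s(\mu)$ is the delicate bookkeeping step; once the exponents $\epsilon,\sigma,\kappa$ are chosen consistently, the maximal estimate and hence the dimension bound follow. A secondary technical point is handling the uniformity over the continuum of scales $N$ (not just the lacunary ones) in the supremum defining $E_\lambda$, which is a routine $\ell^2$-summation / van der Corput-type argument given the polynomial control on $\partial_N \mathcal{A}_{\mathcal{P};N}$, but should be stated carefully since the polynomials $P_i$ have distinct degrees and the relevant oscillatory integral estimates depend on that non-degeneracy.
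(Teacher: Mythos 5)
Your route matches the paper's quite closely: reduce to $1$-bounded $f_i$, apply Frostman's lemma to (a putative large-dimensional piece of) the divergence set, feed the resulting Frostman measure into Theorem \ref{Thm_Counting} --- which is indeed precisely the step that circumvents the $1$-boundedness obstruction you correctly flag --- then run a Chebyshev/Borel--Cantelli argument along a lacunary sequence and bridge to all $N$ by the elementary estimate $\sup_{M\in[N_j,N_{j+1}]}|\mathcal{A}_{\mathcal{P};N_j}-\mathcal{A}_{\mathcal{P};M}|\lesssim\tau$. Two small imprecisions: your set $E_\lambda$ should be defined with $\limsup_{N\to\infty}$ rather than $\sup_{N\ge 1}$ (as written it captures small-$N$ fluctuations and need not be dimensionally small at all), and you should invoke Frostman's lemma to produce a genuine $s$-Frostman measure rather than merely a finite-$s$-energy probability measure, since the hypotheses of Theorem \ref{Thm_Counting} are stated in terms of the Frostman ball condition, not finite energy.

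The one step you gloss over that actually requires a trick: to run Chebyshev you need $\|\mathcal{A}_{\mathcal{P};N}(f_1,\dots,f_k)-\prod_i\int_{\TT} f_i\|_{\cL^1(\mu)}\lesssim N^{-C}$, but ``pairing $\mu$ against $\mathcal{A}_{\mathcal{P};N}-\prod\int f_i$'' controls only the \emph{signed} quantity $\int(\mathcal{A}_{\mathcal{P};N}-\prod\int f_i)\,d\mu = \Lambda_{\mathcal{P};N}(\mu,f_1,\dots,f_k)-\int\mu\cdot\prod\int f_i$, which can be small even when the $\cL^1(\mu)$ norm is large. The paper's Proposition \ref{Prop_norm convergence} resolves this by inserting the pointwise sign $S_N(x)=\sgn\bigl(\mathcal{A}_{\mathcal{P};N}(x)-\prod\int f_i\bigr)$, splitting it into $S_N^{+}$ and $S_N^{-}$, and observing that $S_N^{+}\mu$ and $-S_N^{-}\mu$ are again $s$-Frostman (since $|S_N^{\pm}|\le 1$), so Proposition \ref{Prop_Counting} applies with these in the zeroth slot. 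Without this observation your ``weak-type conversion'' has nothing to feed on. Finally, your guess that Theorem \ref{Thm_Counting} is proved by mollifying $\mu$ at an $N$-dependent scale $N^{-\kappa}$ is not what the paper does (it uses an $N$-independent Littlewood--Paley decomposition and balances the $\cL^\infty$ growth $2^{j(1-s+\tau)}$ of $\Pi_j\mu$ against the decay from the $\cH^{-\sigma}$ norm), but since you use Theorem \ref{Thm_Counting} as a black box this does not affect the logic of Section 5.
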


There is a long history of studying the convergence of \eqref{multiple ergodic average} for both norm convergence and the pointwise convergence problem when the discrete sum average replaces the integral average in a much more general setting. The $\cL^2$-norm convergence of \eqref{multiple ergodic average} with the discrete sum average to the product of integrals was studied by Frantzikinakis and Kra in \cite{FK05}. See also the recent breakthrough paper by Frantzikinakis and Kuca in \cite{FK25} on the norm convergence problem. 
Moreover, Kosz, Mirek, Peluse, and Wright \cite{KMPW25} settled the pointwise almost everywhere convergence of \eqref{multiple ergodic average} with the discrete sum average shortly after. Both papers \cite{FK25} and \cite{KMPW25} in fact proved stronger multidimensional results in a much more general setting; we refer readers to these papers for more detailed literature and the current state of the art.

In another direction, there are only a few results that deal with the continuous average, as in \eqref{multiple ergodic average}. For example, the pointwise almost everywhere convergence of \eqref{multiple ergodic average} for $\mathcal{P}= \{y, y^2\}$ is covered in Christ, Durcik, Kovač, and Roos \cite{CDKR22}, and Frantzikinakis \cite{F23} proved pointwise almost everywhere convergence of multiple ergodic averages with iterates given by a special collection of Hardy field sequences, which covered some cases we consider here. Results on both papers \cite{CDKR22} and \cite{F23} are also proved in a much more general setting.

To the author's knowledge, Theorem \ref{Thm_Convergence} is the first result in the literature to address the pointwise convergence problem beyond the almost everywhere issue, as we utilize the topology of $\mathbb{R}$, which is absent in general probability spaces. It is an interesting question to ask how small the divergence set could be in Theorem \ref{Thm_Convergence}, and we intend to address this issue in future work.

\subsection{Organization of the paper}
In Section 2, we establish some frequently used notation and provide a brief introduction to the necessary background for the Gowers uniformity norm, standard oscillatory integral estimates, and geometric measure theory. In Section 3, we give the proof of the Sobolev smoothing inequality, Theorem \ref{Thm_Sobolev}. In Section 4, we upgrade Theorem \ref{Thm_Sobolev} to Theorem \ref{Thm_Counting}, and we apply it to prove Theorem \ref{Thm_Progression}. Finally, in Section 5, we prove the divergence set of the pointwise convergence problem, Theorem \ref{Thm_Convergence}, by using Theorem \ref{Thm_Counting}.

\subsection{Acknowledgements}
The author would like to thank Zi Li Lim and Polona Durcik for comments
and his advisor, David Conlon, for his
encouragement and support.
The author is supported by the MOE Taiwan-Caltech Fellowship during the conduct of this research.

\section{Notation and preliminaries}
In this section, we present the basic notation and necessary background that will be used throughout the paper.
\subsection{Notation and conventions}
Throughout the paper, all functions we consider are measurable. We write $A\lesssim B$ to mean that there exists an absolute constant $C>0$ such that $A \leq C B$. We write $A \sim B$ if $A\lesssim B \lesssim A$. If the constant depends on other parameter, say $C=C(\mathcal{P})$, then we write  $A \lesssim_{\mathcal{P}} B$ to mean that $A \leq C(\mathcal{P}) B$. We also use the Landau notation. We write $A=O(B)$ if $|A|\lesssim B$, and $A=O_{\mathcal{P}}(B)$ if $|A|\lesssim_{\mathcal{P}} B$. Next, let $1_S$ denote the indicator function of a set $S$, which means $1_S(x)=1$, if $x \in S$; and $1_S(x)=0$, if $x \notin S$. We use $\NN$ to denote the set of positive integers.

Let $\TT \cong \RR/\ZZ$ denote the Torus. Given $x \in \TT$ and $ \xi \in \ZZ$, we set
\[
e(x):=e^{2 \pi i x}.
\]
For any function $f: \TT \rightarrow \CC$, denote the $\cL^p$-norm, for $p \in (1,\infty)$, as 
\[
\|f\|_{\cL^p(\TT)} := \left( \int_{\TT} |f(x)|^p \,dx \right)^{1/p}.
\]
And for $p= \infty$, denote the $\cL^{\infty}$-norm as 
\[
\|f\|_{\cL^{\infty}(\TT)} := \sup_{x \in \TT} |f(x)|.
\]
We call a function $f \in \cL^{\infty}(\TT)$ 1-bounded if $\|f\|_{\cL^{\infty}(\TT)} \leq 1$.
For any $f \in\cL^1(\TT)$, the Fourier coefficient of $f$ is defined as
\[
\widehat{f}(\xi):= \int_{\TT} e(-\xi x) f(x) \, dx.
\]
Similarly, for any function $f: \ZZ \rightarrow \CC$, denote the $l^p$-norm, for $p \in (1,\infty)$, as
\[
\|f\|_{l^p(\TT)} := \left( \sum_{x \in\ZZ} |f(x)|^p \right)^{1/p}.
\]
And for $p= \infty$, denote the $l^{\infty}$-norm as 
\[
\|f\|_{l^{\infty}(\ZZ)} := \sup_{x \in \ZZ} |f(x)|.
\]
Then Parseval's identity reads 
\[
\|f\|_{\cL^2(\TT)}= \|\widehat{f}\|_{l^2(\ZZ)}.
\]
For $\sigma>0$, and for any function $f: \ZZ \rightarrow \CC$, denote the negative Sobolev norm as
\[
\|f\|_{\cH^{-\sigma}(\TT)} := \left( \sum_{\xi \in \ZZ }|\widehat{f}(\xi)|^2 (1+|\xi|^2)^{-\sigma/2}  \right)^{1/2}.
\]
\subsection{Gowers $U^s$-norm}
Given $x,h \in \TT$, define the multiplicative derivative
\[
\triangle_hf(x):=f(x)\overline{f}(x+h),
\]
for any $f \in \cL^{\infty}(\TT)$. And for $h_1, \cdots, h_s \in \TT$, we define
\[
\triangle_{h_1, \cdots, h_s}f(x):=\triangle_{h_1} \cdots \triangle_{h_s}f(x)= \prod_{\underline{w}\in\{0,1\}^s} \cC^{|\underline{w}|}f(x+\underline{w}\cdot \underline{h}),
\]
where $\cC f:= \overline{f}$ is the conjugation operator, and $\underline{h}:=\{h_1, \cdots, h_s\}$.

Next, for any $s \in \NN$, we define the Gowers $U^s$-norm by
\[
\|f\|_{U^s(\TT)} := \left( \int_{\TT^{s+1}}  \triangle_{h_1, \cdots, h_s}f(x) \,dx \, dh_1,...,\,dh_s \right)^{1/2^s}.
\]
For $s=2$, we have a nice formula for the $U^2$-norm in terms of the Fourier coefficient
\[
\|f\|_{U^2(\TT)} = \|\widehat{f}\|_{l^4(\ZZ)}.
\]
We will frequently use the following fact later
\begin{equation} \label{U^2-inverse theorem}
    \|f\|^4_{U^2(\TT)}=\|\widehat{f}\|^4_{l^4(\ZZ)} \leq \|\widehat{f}\|^2_{l^{\infty}(\ZZ)}
    \leq \left|\int_{\TT}f \right|^2 + \sup_{\xi \in \ZZ-\{0\}} \left|\int_{\TT} e(\xi x)f(x) \, dx \right|^2,
\end{equation}
for any 1-bounded function $f$.
$U^s$-norms are seminorms, and are norms for $s\geq 2$. Also, $U^s$-norms satisfy the following monotonicity property
\begin{equation} \label{Monotonicity}
    \|f\|_{U^s(\TT)} \leq \|f\|_{U^{s+1}(\TT)},
\end{equation}
for any 1-bounded function $f$. Moreover, we have the following Gowers-Cauchy-Schwarz inequality
\begin{equation} \label{Gowers-Cauchy-Schwarz inequality}
\left|
    \int_{\TT^{s+1}} \prod_{\underline{w}\in\{0,1\}^s} \cC^{|\underline{w}|}f_{\underline{w}}(x+\underline{w}\cdot \underline{h}) \,dx \, dh_1,...,\,dh_s \right| \leq \prod_{\underline{w}\in\{0,1\}^s} \|f_{\underline{w}}\|_{U^s(\TT)}.
\end{equation}
We refer the readers to \cite[Chapter 11]{TV06} for more properties of the Gowers norm.
\begin{remark}
    Note that all the notions above can be defined when the underlying space is changed to $\RR$. In this case, we have the notion of the Fourier transform, instead of the Fourier coefficient. See \cite[Section 3, 4]{KMPW24} for more discussion.
\end{remark}

\subsection{Oscillatory integral estimates}
In this subsection, we record the classical van der Corput estimates, which can be found, for example, in \cite[Appendix B]{K22}:
\begin{lemma} \label{Lma_Oscillatory integral}
    Let $P(y)= \sum_{i=1}^d a_i y^i \in \RR[y]$. Then 
    \[
    \left|\int_{0}^1 e(P(y))\, dy\right| \lesssim_d (1+\max_i \, |a_i|)^{-1/d}.
    \]
    In particular, we have
    \[
    \left|\frac{1}{N} \int_0^N e(\xi P(y)) \, dy
    \right|
    \lesssim_d  \left|1+\left(N|\xi|\cdot \max_i \, |a_i|\right)\right|^{-1/d}
    \lesssim_P \left(N|\xi|\right)^{-1/d},
    \]
    for any $\xi \in \ZZ-\{0\}$ and $N \geq1$.
\end{lemma}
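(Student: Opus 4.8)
The plan is to reduce the ``in particular'' estimate to the first one by rescaling, and to prove the first estimate from the classical van der Corput $k$-th derivative test combined with an elementary partitioning lemma for polynomials. For the reduction: the substitution $y = Nt$ gives $\frac{1}{N}\int_0^N e(\xi P(y))\,dy = \int_0^1 e(\xi P(Nt))\,dt$, and $\xi P(Nt) = \sum_{i=1}^d (\xi a_i N^i)\,t^i$ is a polynomial in $t$ of degree at most $d$ whose coefficients have maximum modulus $\max_i |\xi a_i N^i| \ge |\xi|\,N\,\max_i|a_i|$, since $N^i \ge N$ for $i \ge 1$ and $N \ge 1$; applying the first estimate to this polynomial and using that $\max_i|a_i|$ is a positive constant depending only on $P$ (if $P \equiv 0$ everything is trivial), together with $|\xi|, N \ge 1$, then yields both displayed bounds.

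For the first estimate I would first note that if $\max_i|a_i| \le 1$ there is nothing to prove, and otherwise normalize by writing $P = A\,Q$ where $A = \max_{1 \le i \le d}|a_i| > 1$ and $Q = \sum_i b_i y^i$ has $|b_i| \le 1$ for all $i$ and $\max_i|b_i| = 1$. The crux is an elementary claim, to be proved by induction on $\deg Q \le d$: there exist $c_d > 0$ and a partition of $[0,1]$ into $O_d(1)$ subintervals such that on each subinterval $I$ one has $|Q^{(k)}(y)| \ge c_d$ for all $y \in I$ for some $k = k(I) \in \{1,\dots,d\}$, with $Q'$ moreover monotone on $I$ whenever $k(I) = 1$. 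Granting this, on each piece $I$ we get $|P^{(k)}(y)| = A\,|Q^{(k)}(y)| \ge c_d A$, so the van der Corput $k$-th derivative estimate gives $\bigl|\int_I e(P(y))\,dy\bigr| \lesssim_d (c_d A)^{-1/k} \le A^{-1/d}$ because $A > 1$ and $k \le d$; summing over the $O_d(1)$ subintervals completes the proof, since $A^{-1/d} \sim_d (1+A)^{-1/d}$.

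To prove the partitioning claim I would induct on the degree, splitting according to the size of the leading coefficient: if $|b_d| \gtrsim_d 1$ then $|Q^{(d)}| = d!\,|b_d|$ is uniformly bounded below and the trivial partition works with $k = d$; if $|b_d|$ is small, then $\max_i|b_i| = 1$ is attained at some index $\le d-1$, and I would either apply the inductive hypothesis to a suitable rescaling of $Q'$ (of degree $\le d - 1$), turning a lower bound on $(Q')^{(k')}$ into one on $Q^{(k'+1)}$ with $k'+1 \ge 2$ (so no monotonicity is needed), or, in the remaining case where $b_2,\dots,b_d$ are all small and hence $|b_1| = 1$, observe that then $|Q'| \ge \tfrac12$ on all of $[0,1]$ and subdivide $[0,1]$ at the at most $d-2$ zeros of $Q''$ into intervals on which $Q'$ is monotone, taking $k = 1$ there. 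I expect the only real work to be bookkeeping — keeping the constant $c_d$ and the number of subintervals uniform through the induction, and correctly invoking the monotonicity hypothesis in the $k = 1$ case of van der Corput — rather than anything conceptual; indeed for this reason one could also simply appeal to \cite[Appendix B]{K22}.
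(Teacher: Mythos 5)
The paper does not actually prove this lemma: it is recorded as a ``classical van der Corput estimate'' with a pointer to \cite[Appendix B]{K22}, so there is no in-paper argument to compare against. Your overall strategy --- deduce the second display from the first via the substitution $y = Nt$, then prove the first by writing $P = AQ$ with $A = \max_i|a_i| > 1$ and partitioning $[0,1]$ so that van der Corput's $k$-th derivative test applies on each piece --- is the standard route to this estimate and is sound. The rescaling step is clean and correct: $\xi P(Nt) = \sum_{i=1}^d (\xi a_i N^i)t^i$ has largest coefficient at least $|\xi|\,N\,\max_i|a_i|$ because every exponent $i$ is $\ge 1$ and $N \ge 1$, and the last inequality in the display then follows because $\max_i|a_i|$ is a fixed positive constant (assuming, as one must, $P \not\equiv 0$) and $|\xi|, N \ge 1$.

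There is one genuine imprecision in the inductive proof of the partitioning claim, and it concerns the constant term. Your $Q = P/A$ has zero constant term, and the normalization ``$|b_i| \le 1$ for all $i$, $\max_i|b_i| = 1$'' is stated for such $Q$. But in the inductive step you apply the claim to a rescaling of $Q'$, and $Q'$ has a nonzero constant term $b_1$. After dividing $Q'$ by the maximum of its degree-$\ge 1$ coefficients --- which in the relevant case is bounded below only by some $\eta_d = O_d(1)^{-1}$ --- the constant term of the rescaled polynomial is $|b_1|/\eta_d$ and can well exceed $1$, so the stated normalization is violated and the induction as written does not close. The fix is straightforward: state the claim so that the normalization constrains only the degree-$\ge 1$ coefficients (the constant term is invisible to every $Q^{(k)}$ with $k \ge 1$, which is all you ever use). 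You should also record the thresholds explicitly, since they interact: for instance ``$|Q'| \ge 1/2$ on $[0,1]$'' in the last subcase requires $\max_{2 \le i \le d}|b_i| \lesssim d^{-2}$, not merely ``small'', and the constant $c_d$ you obtain is a minimum over the three cases of quantities involving $c_{d-1}$, $\eta_d$, and the leading-coefficient threshold. As an alternative that avoids this bookkeeping, a compactness argument works: the normalized polynomials form a compact family on which $(Q,y) \mapsto \max_{1 \le k \le d}|Q^{(k)}(y)|$ is continuous and strictly positive (any common zero of $Q',\dots,Q^{(d)}$ would force $Q$ constant), giving a uniform lower bound, and the superlevel sets of each $|Q^{(k)}|$ have $O_d(1)$ components, from which the desired partition follows. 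Of course, as you note, one may also simply cite \cite[Appendix B]{K22}, which is what the paper does.
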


\subsection{Frostman measure and the Riesz energy}
This subsection aims to introduce the needed background from geometric measure theory. Most of the results in this subsection can be found in \cite[Chapter 2, 3]{M15}.

We first introduce the notion of the Frostman measure, which plays an important role when studying combinatorial problems in fractal sets:
\begin{definition}[Frostman measure]
        Let $s>0$. We call a measure $\mu$ an s-Frostman measure if 
        \begin{equation} \label{definition of Frostman measure}
            \mu (B(x,r)) \leq  r^s, 
            \forall\, x \in \mathbb{R}^d, \, \forall r>0.
        \end{equation}
\end{definition}
\begin{remark}
    We do not assume an s-Frostman measure $\mu$ to be a probability measure. But it follows from the definition that if $\mu$ has a compact support, then it is a finite measure. In particular, if $\text{spt}(\mu) \subset \TT$, we have $\mu(\TT)\leq 1$.
\end{remark}
The following theorem \ref{Frostman's lemma}, known as Frostman's lemma, says that one can view a fractal set as a Frostman measure in practice.
\begin{theorem}[Frostman's lemma] \label{Frostman's lemma}
        Let $s \in (0,d]$. For a Borel set $E \subset \mathbb{R}^d$, $\mathcal{H}^{s}(E)>0$ iff there exists $\mu \in \mathcal{M}(E)$ such that $\mu$ is an s-Frostman measure, where $\mathcal{H}^{s}$ denotes the $s$-dimensional Hausdorff measure, and $\mathcal{M}(E)$ denotes the set of all finite Borel measures with compact support $\text{spt}(\mu) \subset E$.
\end{theorem}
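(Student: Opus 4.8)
\emph{Overview and the ``if'' direction.} The statement splits into the easy direction (the ``mass distribution principle'') and the substantive converse, which I would handle by a dyadic construction followed by a weak-$*$ compactness argument. For the ``if'' direction, suppose $\mu\in\mathcal{M}(E)$ is a nonzero $s$-Frostman measure; since $\text{spt}(\mu)\subset E$ we have $\mu(E)=\mu(\RR^d)>0$. For any cover $E\subset\bigcup_i U_i$ by sets of diameter at most $\delta$, discard the $U_i$ missing $E$, pick $x_i\in U_i\cap E$, and note $U_i\cap E\subset\overline{B}(x_i,\operatorname{diam}U_i)$, so $\mu(U_i\cap E)\le(\operatorname{diam}U_i)^s$ (apply \eqref{definition of Frostman measure} to open balls of slightly larger radius and pass to the limit). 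Summing over $i$ gives $\mu(E)\le\sum_i(\operatorname{diam}U_i)^s$; taking the infimum over such covers and letting $\delta\to0$ yields $\mathcal{H}^s(E)\ge\mu(E)>0$.

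\emph{The ``only if'' direction.} Assume $\mathcal{H}^s(E)>0$. The first move is to reduce to $E$ compact, using the standard fact that every Borel set of positive $\mathcal{H}^s$-measure contains a compact subset of positive (finite) $\mathcal{H}^s$-measure (see \cite{M15}); after a translation and dilation I may assume $E$ is compact with $E\subset Q_0:=[0,1)^d$, and since positive Hausdorff measure forces positive Hausdorff content, $c_0:=\mathcal{H}^s_\infty(E)>0$. Next, for each $N\in\NN$ I would build a measure $\mu_N$ by the familiar two-pass procedure on the dyadic subcubes of $Q_0$: on each generation-$N$ cube meeting $E$ place a constant multiple of Lebesgue measure of total mass $\ell(Q)^s=2^{-Ns}$ (nothing on the others), then sweep upward through generations $N-1,\dots,0$ and, whenever a cube $Q$ has accumulated more than $\ell(Q)^s$, multiply the measure restricted to $Q$ by the factor $\ell(Q)^s/(\text{accumulated mass})<1$. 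By construction $\mu_N(Q)\le\ell(Q)^s$ for every dyadic $Q$ of generation $\le N$; since $s\le d$, mass splits at rate $2^{-d}\le2^{-s}$ per generation, so this bound in fact holds for all dyadic cubes. The crucial property is a uniform lower bound $\mu_N(Q_0)\gtrsim_d c_0$: every $x\in E$ lies in a dyadic cube $Q$ with $\mu_N(Q)=\ell(Q)^s$ exactly --- either its generation-$N$ cube, if that was never shrunk, or else the highest ancestor that got rescaled (no higher ancestor was rescaled, so its mass was set to $\ell(Q)^s$ and never decreased afterwards) --- and extracting from these ``full'' cubes a pairwise disjoint subcover $\{Q_j\}$ of $E$ gives $\mu_N(Q_0)\ge\sum_j\mu_N(Q_j)=\sum_j\ell(Q_j)^s\gtrsim_d\mathcal{H}^s_\infty(E)=c_0$. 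Hence $c_0\lesssim_d\mu_N(Q_0)\le1$ uniformly in $N$.

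\emph{Passing to the limit.} The $\mu_N$ live on the compact set $\overline{Q_0}$ with uniformly bounded mass, so along a subsequence they converge weak-$*$ to a finite measure $\mu$; refining so that the total masses also converge, the Portmanteau theorem applies. Compactness of $\overline{Q_0}$ gives $\mu(\overline{Q_0})\ge\limsup_N\mu_N(\overline{Q_0})\gtrsim_d c_0>0$, so $\mu\ne0$. For any ball $B(x,r)$, cover it by $O_d(1)$ dyadic cubes of side $\lesssim r$; since $B(x,r)$ is open, $\mu(B(x,r))\le\liminf_N\mu_N(B(x,r))\lesssim_d r^s$, so $\mu$ is $s$-Frostman up to a dimensional constant and a suitable multiple $c_d^{-1}\mu$ is genuinely $s$-Frostman. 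And if $y\notin E$ then, $E$ being compact, every sufficiently fine dyadic cube near $y$ misses $E$, so $\mu_N$ vanishes on a fixed neighborhood of $y$ once $N$ is large and hence so does $\mu$; therefore $\text{spt}(\mu)\subset E$ and is compact, so $c_d^{-1}\mu\in\mathcal{M}(E)$ is the desired measure. I expect the main obstacle to be the very first reduction, extracting a compact subset of positive Hausdorff measure: that is the one step that uses genuine regularity properties of $\mathcal{H}^s$, and it is vacuous in the application to Theorem~\ref{Thm_Progression}, where $E$ is already compact.
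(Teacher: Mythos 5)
The paper does not prove Frostman's lemma; it states it as background and cites Mattila \cite[Chapter 2, 3]{M15}, so there is no proof in the paper to compare against. Your argument is a correct and complete rendition of the standard proof (and is, modulo presentation, the one in Mattila): the mass distribution principle for the easy direction; reduction to $E$ compact inside a unit cube using inner regularity of $\mathcal{H}^s$; the two-pass dyadic construction of $\mu_N$ with the ``full cube'' pigeonhole giving the uniform lower bound $\mu_N(Q_0)\gtrsim_d\mathcal{H}^s_\infty(E)$; and weak-$*$ compactness plus Portmanteau to pass to the limit, obtain the Frostman bound on balls up to a dimensional constant, and localize the support. The one step you flag as the potential obstacle --- extracting a compact subset of positive $\mathcal{H}^s$-measure from a Borel set --- is indeed where the real measure-theoretic work is hidden (it is Davies's theorem in full generality), and you are right that it is vacuous in the application of this paper, where $E$ is already compact. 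The only cosmetic remark is that the ``refining so that the total masses also converge'' step is not strictly needed: since all $\mu_N$ live on the fixed compact set $\overline{Q_0}$, weak-$*$ convergence against $C(\overline{Q_0})$ already gives the Portmanteau inequalities you invoke.
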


Next, we introduce the notion of the Riesz energy, which is important from the perspective of harmonic analysis:
\begin{definition}[Riesz energy]
        Let $s>0$. We define the Riesz energy of a Borel measure $\mu$, denoted by $\mathcal{I}_s(\mu)$, as follows:
        \[
            \mathcal{I}_s(\mu)  \equiv
            \int_{\RR^d} \int_{\RR^d} |x-y|^{-s} \, d\mu(y) \, d\mu(x)
            = \int_{\RR^d} k_s * \mu(x) \, d\mu(x),
        \]
        where $k_s$ is the Riesz kernel: 
        $k_s(x)=|x|^{-s}$, $x\in \mathbb{R}^d \setminus \{0\}$.
    \end{definition}
    The study of the Frostman measure is often associated with the Riesz energy via the following proposition.
    \begin{proposition}
        Let $\mu \in\mathcal{M}(\RR^d)$ be an s-Frostman. Then $\mathcal{I}_t(\mu)\lesssim_{s,t,d} \mu(\RR^d), \forall t\in[0,s)$.
    \end{proposition}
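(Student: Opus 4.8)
The plan is to estimate the inner Riesz potential $k_t*\mu(x)=\int_{\RR^d}|x-y|^{-t}\,d\mu(y)$ pointwise, uniformly in $x$, by a dyadic decomposition in the distance $|x-y|$, and then integrate the resulting bound against $d\mu(x)$. By Fubini,
\[
\mathcal{I}_t(\mu)=\int_{\RR^d}\paren{\int_{\RR^d}|x-y|^{-t}\,d\mu(y)}d\mu(x),
\]
so it suffices to control the inner integral for each fixed $x$. First I would split it at distance $1$: on the region $\{|x-y|>1\}$ the kernel satisfies $|x-y|^{-t}\le 1$, so that part contributes at most $\mu(\RR^d)$. On the region $\{|x-y|\le 1\}$ I would decompose into the dyadic annuli $A_j=\set{y:2^{-j-1}<|x-y|\le 2^{-j}}$ for $j\ge 0$: there $|x-y|^{-t}\le 2^{(j+1)t}$, while the $s$-Frostman hypothesis \eqref{definition of Frostman measure} gives $\mu(A_j)\le\mu(B(x,2^{-j}))\le 2^{-js}$. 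Summing, the near part is at most $2^{t}\sum_{j\ge 0}2^{-j(s-t)}$, a geometric series that converges exactly because $t<s$, to a constant $C(s,t)$. Altogether this produces the uniform bound $k_t*\mu(x)\le C(s,t)+\mu(\RR^d)$.

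Integrating this against $d\mu$ then gives $\mathcal{I}_t(\mu)\le\paren{C(s,t)+\mu(\RR^d)}\mu(\RR^d)$, so to reach the stated bound $\mathcal{I}_t(\mu)\lesssim_{s,t,d}\mu(\RR^d)$ I would finish by absorbing the remaining factor of $\mu(\RR^d)$: since the Frostman measures arising here are supported in the torus, covering $\text{spt}(\mu)$ by $O_d(1)$ balls of radius $\lesssim_d 1$ and applying \eqref{definition of Frostman measure} once more yields $\mu(\RR^d)\le C(s,d)$ (indeed $\mu(\TT)\le 1$ when $d=1$, as recorded in the remark above). Plugging this into the second factor gives the claim with a constant depending only on $s,t,d$.

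Since this is a standard energy estimate I do not expect a genuine obstacle; the two places to be careful are that the geometric series converges \emph{only} in the strict range $t<s$ — which is precisely the hypothesis $t\in[0,s)$, the endpoint $t=0$ being trivial — and the mild normalization point that the naive estimate scales like $\mu(\RR^d)^2$, so that the passage to a bound linear in $\mu(\RR^d)$ really uses that $\text{spt}(\mu)$ has bounded diameter (harmless here, since $\mu$ always lives on $\TT$).
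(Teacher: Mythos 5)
Your proof is correct and is the standard dyadic estimate for the Riesz potential; the paper does not give its own argument here (it cites Mattila), so there is nothing internal to compare against. The normalization point you flag at the end is genuine and worth dwelling on: the pointwise bound $k_t*\mu(x)\le C(s,t)+\mu(\RR^d)$ only integrates to a bound quadratic in the total mass, and the proposition \emph{as literally stated} --- for an arbitrary compactly supported $s$-Frostman $\mu$ on $\RR^d$ --- is actually false without a diameter restriction. For instance $\mu=\tfrac12\,\mathrm{Leb}|_{[0,N]}$ on $\RR$ is $1$-Frostman with $\mu(\RR)\sim N$, yet $\mathcal{I}_t(\mu)\sim N^{2-t}$ with $2-t>1$, so $\mathcal{I}_t(\mu)/\mu(\RR)\to\infty$. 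What makes the statement true in the paper's applications, as you observe, is that $\text{spt}(\mu)\subset\TT$, which forces $\mu(\RR^d)\le 1$. Once that is in hand you can in fact skip the split at radius $1$ entirely: with $\text{diam}(\text{spt}\,\mu)\lesssim 1$ the dyadic annuli $A_j$ range only over $j\ge 0$, so $k_t*\mu(x)\lesssim_{s,t}1$ uniformly on the support and the linear bound $\mathcal{I}_t(\mu)\lesssim_{s,t}\mu(\RR^d)$ drops out in one step. So your argument is complete for the setting the paper needs, and your closing caveat correctly identifies the implicit hypothesis the statement as written is relying on.
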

Finally, we relate the Riesz energy and the negative Sobolev norm as follows
        \begin{proposition}
        Let $\mu \in \mathcal{M}(\mathbb{R}^d)$ and $s\in (0,d)$. Then we have
        \[
            \mathcal{I}_s(\mu)=\gamma(d,s)
            \int_{\RR^d} |\widehat{\mu}(\xi)|^2 |\xi|^{s-d} \, d\xi
            \sim \gamma(d,s) \|\mu\|^2_{\cH^{s-d}(\RR^d)},
        \]
        where $\|f\|^2_{\cH^{\sigma}(\RR^d)}:=\int_{\RR^d} |\widehat{f}(\xi)|^2 (1+|\xi|^2)^{\sigma/2} \, d\xi$.
    \end{proposition}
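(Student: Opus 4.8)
The plan is to deduce the identity from the classical formula for the Fourier transform of the Riesz kernel, followed by a Plancherel-type identity for the bilinear form $\mu\mapsto\int(k_s*\mu)\,d\mu$, and to conclude with an elementary comparison of Fourier weights. First I would record that, as a tempered distribution, $\widehat{k_s}(\xi)=\gamma(d,s)|\xi|^{s-d}$ for $0<s<d$, with $\gamma(d,s)=\pi^{s-d/2}\,\Gamma\!\big(\tfrac{d-s}{2}\big)/\Gamma\!\big(\tfrac s2\big)$ in the normalization of the excerpt. This comes from the subordination formula $|x|^{-s}=\frac{\pi^{s/2}}{\Gamma(s/2)}\int_0^\infty t^{s/2-1}e^{-\pi t|x|^2}\,dt$: one pairs both sides against a Schwartz function, interchanges the $t$-integral with the Fourier transform (legitimate when $0<s<d$, where $|x|^{-s}$ and $|\xi|^{s-d}$ are locally integrable), uses the self-duality of Gaussians so that $e^{-\pi t|x|^2}$ transforms to $t^{-d/2}e^{-\pi|\xi|^2/t}$, and substitutes $t\mapsto 1/t$ to recognise the Gamma integral for $|\xi|^{s-d}$; one can equally cite this from \cite[Chapter 3]{M15}.

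Next I would upgrade this to $\mathcal{I}_s(\mu)=\gamma(d,s)\int_{\RR^d}|\widehat\mu(\xi)|^2|\xi|^{s-d}\,d\xi$, read as an identity in $[0,\infty]$. Since $\mu$ is a finite measure with compact support, $\widehat\mu$ is bounded and continuous and $k_s*\mu$ is finite a.e.\ ($k_s\in L^1_{\mathrm{loc}}$ as $s<d$). Mollify by a Gaussian, $\mu_\eps:=\mu*\psi_\eps\in\mathcal{S}(\RR^d)$; the Parseval identity in the sense of tempered distributions gives $\int(k_s*\mu_\eps)\mu_\eps=\gamma(d,s)\int|\widehat\mu|^2|\xi|^{s-d}|\widehat{\psi_\eps}|^2\,d\xi$. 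Letting $\eps\to 0$, the right-hand side increases to $\gamma(d,s)\int|\widehat\mu|^2|\xi|^{s-d}$ by monotone convergence, since $|\widehat{\psi_\eps}|^2\uparrow 1$ and $|\xi|^{s-d}\ge 0$. The left-hand side equals $\int\!\!\int(k_s*\eta_\eps)(a-b)\,d\mu(a)\,d\mu(b)$ for the approximate identity $\eta_\eps=\psi_\eps*\widetilde{\psi_\eps}$; a direct estimate gives the uniform domination $k_s*\eta_\eps\lesssim k_s$ together with $k_s*\eta_\eps\to k_s$ off the origin, so dominated convergence identifies the limit with $\mathcal{I}_s(\mu)$ when this is finite, and Fatou's lemma forces both sides to be $+\infty$ together otherwise (e.g.\ when $\mu$ has an atom).

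For the equivalence with $\|\mu\|^2_{\cH^{s-d}(\RR^d)}$ I would just compare $|\xi|^{s-d}$ to $(1+|\xi|^2)^{(s-d)/2}$ after splitting at $|\xi|=1$. On $\{|\xi|\ge 1\}$ one has $|\xi|^{s-d}\le(1+|\xi|^2)^{(s-d)/2}\le 2^{(d-s)/2}|\xi|^{s-d}$, so the two weighted integrals over that region are comparable. On $\{|\xi|<1\}$ one has $(1+|\xi|^2)^{(s-d)/2}\le 1\le|\xi|^{s-d}$, which alone yields $\gamma(d,s)\|\mu\|^2_{\cH^{s-d}}\le\mathcal{I}_s(\mu)$; for the reverse one bounds $\int_{|\xi|<1}|\widehat\mu|^2|\xi|^{s-d}\le\mu(\RR^d)^2\int_{|\xi|<1}|\xi|^{s-d}\,d\xi\lesssim_{d,s}\mu(\RR^d)^2$ and then uses compactness of $\operatorname{spt}\mu$ (so that $\widehat\mu$ stays close to $\mu(\RR^d)=\widehat\mu(0)$ on a ball whose radius depends only on $\operatorname{diam}\operatorname{spt}\mu$) to get $\mu(\RR^d)^2\lesssim\int_{|\xi|<1}|\widehat\mu|^2\le\|\mu\|^2_{\cH^{s-d}}$; this last constant is absolute once $\operatorname{spt}\mu\subset\TT$, as in the application.

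The main obstacle I anticipate is the limiting argument of the second paragraph: making the Parseval step rigorous even though $k_s*\mu_\eps$ need not lie in $L^2$, verifying the uniform domination $k_s*\eta_\eps\lesssim k_s$, and correctly handling the case $\mathcal{I}_s(\mu)=+\infty$. The Fourier transform of $k_s$ and the weight comparison are otherwise routine.
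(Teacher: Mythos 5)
The paper does not prove this proposition; it quotes it and cites Mattila \cite[Chapter 3]{M15}. Your argument is exactly the standard proof there: Fourier-transform the Riesz kernel via Gaussian subordination, pass to the energy identity by mollification with Parseval plus a monotone/Fatou limiting argument, and then compare the weights $|\xi|^{s-d}$ and $(1+|\xi|^2)^{(s-d)/2}$. Two small remarks. First, the inequality chain you state on $\{|\xi|\ge 1\}$ is written in the wrong direction: since $s-d<0$, from $|\xi|^2\le 1+|\xi|^2\le 2|\xi|^2$ one gets
\[
(1+|\xi|^2)^{(s-d)/2}\ \le\ |\xi|^{s-d}\ \le\ 2^{(d-s)/2}(1+|\xi|^2)^{(s-d)/2}\qquad(|\xi|\ge1),
\]
not the ordering you wrote; the comparability conclusion you draw from it is of course unaffected. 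Second, you correctly flag that the constant in the upper bound $\int_{\RR^d}|\widehat\mu|^2|\xi|^{s-d}\lesssim\|\mu\|^2_{\cH^{s-d}}$ depends on $\operatorname{diam}\operatorname{spt}\mu$ (through the low-frequency control of $|\widehat\mu|$); the paper suppresses this, but it is harmless because the proposition is only invoked in Proposition~\ref{Norm Equivalence}, where $\operatorname{spt}(\mu)\subset\TT$ is assumed, so the constant becomes absolute. The lower bound $\|\mu\|^2_{\cH^{s-d}}\le\int|\widehat\mu|^2|\xi|^{s-d}$ holds with constant $1$ for all $\xi$, as you note.
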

So far, we have only discussed the Frostman measures $\mu$ in $\RR^d$. However, if $\text{spt}(\mu) \subset [0,1) \cong \TT$, similar results in this subsection also hold if we consider the Fourier coefficients of $\mu$ instead of the Fourier transform of $\mu$. We will frequently use the following Proposition \ref{Norm Equivalence} as the conclusion of this subsection.
\begin{proposition} \label{Norm Equivalence}
    Let $s \in(0,1]$, $t \in (0,s)$, and $\mu$ be an s-Frostman measure with $\text{spt}(\mu) \subset \TT$. Then we have
    \[
     \|\mu\|^2_{\cH^{t-1}(\TT)} \lesssim_{s,t}1.
    \]
\end{proposition}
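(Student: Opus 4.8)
The plan is to prove the torus counterpart of the identity $\mathcal{I}_t(\mu)\sim\|\mu\|^2_{\cH^{t-1}(\RR)}$ recorded above --- with the Fourier coefficients of $\mu$ in place of its Fourier transform, as anticipated in the paragraph preceding the statement --- and to feed into it the Riesz energy bound for Frostman measures. By the proposition relating an $s$-Frostman measure to its Riesz energy, together with the remark that $\mathrm{spt}\,\mu\subset\TT$ forces $\mu(\TT)\le1$, the Euclidean energy obeys $\mathcal{I}_t(\mu)\lesssim_{s,t}\mu(\RR)\le1$. Identifying $\TT$ with $[-\tfrac12,\tfrac12)$, for any $x,y\in\mathrm{spt}\,\mu$ the torus distance $\mathrm{dist}_\TT(x,y)$ is at most the Euclidean distance $|x-y|$, so the torus Riesz energy satisfies
\[
\mathcal{I}^{\TT}_t(\mu):=\int_\TT\!\int_\TT \mathrm{dist}_\TT(x,y)^{-t}\,d\mu(x)\,d\mu(y)\;\le\;\mathcal{I}_t(\mu)\;\lesssim_{s,t}\;1 .
\]

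Next I would take the Fourier expansion of the torus energy. Since $t<s\le1$, the torus Riesz kernel $g_t(x):=\mathrm{dist}_\TT(x,0)^{-t}$ lies in $\cL^1(\TT)$, and the torus analogue of the energy-Fourier formula recorded above (proved by the usual mollification, cf.\ \cite[Chapter 2, 3]{M15}) gives $\mathcal{I}^{\TT}_t(\mu)=\sum_{\xi\in\ZZ}\widehat{g_t}(\xi)\,|\widehat{\mu}(\xi)|^2$. The key point is a one-sided lower bound $\widehat{g_t}(\xi)\ge\tfrac12\gamma(1,t)\,|\xi|^{t-1}$ for $|\xi|$ large. On the fundamental domain $[-\tfrac12,\tfrac12)$ the function $g_t$ agrees with the Euclidean Riesz kernel $k_t(x)=|x|^{-t}$, so $\widehat{g_t}(\xi)=\widehat{k_t}(\xi)-\int_{|x|\ge1/2}|x|^{-t}e(-\xi x)\,dx$, the integrals involving $k_t$ being understood as oscillatory integrals. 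Using the distributional identity $\widehat{k_t}(\xi)=\gamma(1,t)\,|\xi|^{t-1}$ with $\gamma(1,t)>0$ (the same positive constant appearing in the proposition above), and one integration by parts to bound the truncation tail by $O_t(|\xi|^{-1})$, one obtains $\widehat{g_t}(\xi)=\gamma(1,t)|\xi|^{t-1}+O_t(|\xi|^{-1})$; since $t-1>-1$ the main term dominates, which gives the claimed bound for all $|\xi|\ge\xi_0=\xi_0(t)$.

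Finally I would assemble the estimate: splitting the Parseval sum at $|\xi|=\xi_0$ and using the previous step,
\[
\sum_{|\xi|\ge\xi_0}|\xi|^{t-1}|\widehat{\mu}(\xi)|^2\;\lesssim_t\;\sum_{|\xi|\ge\xi_0}\widehat{g_t}(\xi)\,|\widehat{\mu}(\xi)|^2\;\le\;\mathcal{I}^{\TT}_t(\mu)+\sum_{|\xi|<\xi_0}|\widehat{g_t}(\xi)|\,|\widehat{\mu}(\xi)|^2\;\lesssim_{s,t}\;1,
\]
where in the last step I used $\mathcal{I}^{\TT}_t(\mu)\lesssim_{s,t}1$, $|\widehat{g_t}(\xi)|\le\|g_t\|_{\cL^1(\TT)}$, $|\widehat{\mu}(\xi)|\le\mu(\TT)\le1$, and that there are only $O_t(1)$ frequencies with $|\xi|<\xi_0$; those same frequencies also contribute $\sum_{|\xi|<\xi_0}(1+|\xi|^2)^{(t-1)/2}|\widehat{\mu}(\xi)|^2\lesssim_t1$. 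Since $(1+|\xi|^2)^{(t-1)/2}\sim_t|\xi|^{t-1}$ for $|\xi|\ge1$, adding the two bounds yields $\|\mu\|^2_{\cH^{t-1}(\TT)}=\sum_{\xi\in\ZZ}(1+|\xi|^2)^{(t-1)/2}|\widehat{\mu}(\xi)|^2\lesssim_{s,t}1$, which is the claim. I expect the only genuinely delicate step to be the kernel estimate in the second paragraph: one needs not merely that $|\widehat{g_t}(\xi)|$ is of order $|\xi|^{t-1}$ but that it has the correct sign for large $|\xi|$, which is why it is convenient to peel off the Euclidean Riesz kernel (whose Fourier transform carries the explicit positive constant $\gamma(1,t)$) and treat the truncation tail as a lower-order error. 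Everything else is a routine combination of the Frostman energy estimate and Parseval's identity.
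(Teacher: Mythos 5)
The paper gives no proof of this proposition; it is stated as a corollary of the preceding discussion (the Frostman-to-Riesz-energy bound and the Fourier-analytic characterization of the Riesz energy, transferred to $\TT$). Your overall strategy — bound a torus Riesz energy of $\mu$, then pass to the Sobolev norm via the Fourier coefficients of the torus Riesz kernel $g_t(x)=\mathrm{dist}_\TT(x,0)^{-t}$ — is precisely the argument the paper has in mind, and the second and third paragraphs (the lower bound $\widehat{g_t}(\xi)\gtrsim_t |\xi|^{t-1}$ for large $|\xi|$ by peeling off the Euclidean kernel, and the final assembly) are correct, if somewhat more elaborate than strictly necessary.

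There is, however, a genuine error in the first step. You write that $\mathrm{dist}_\TT(x,y)\le|x-y|$, which is correct, and conclude $\mathcal{I}^{\TT}_t(\mu)\le\mathcal{I}_t(\mu)$. But the Riesz kernel $r\mapsto r^{-t}$ is \emph{decreasing}, so a smaller distance gives a \emph{larger} kernel value: $\mathrm{dist}_\TT(x,y)^{-t}\ge|x-y|^{-t}$, hence $\mathcal{I}^{\TT}_t(\mu)\ge\mathcal{I}_t(\mu)$. The inequality goes the wrong way, and since the ratio $|x-y|/\mathrm{dist}_\TT(x,y)$ is unbounded near the endpoints of the fundamental domain, there is no cheap two-sided comparison. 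The conclusion $\mathcal{I}^{\TT}_t(\mu)\lesssim_{s,t}1$ is nevertheless true, but you should prove it directly rather than via the Euclidean energy. The clean fix is the standard dyadic/layer-cake estimate applied to the torus: for each $x$,
\[
\int_{\TT}\mathrm{dist}_\TT(x,y)^{-t}\,d\mu(y)=\int_0^\infty \mu\bigl(B_\TT(x,\lambda^{-1/t})\bigr)\,d\lambda,
\]
and a torus ball of radius $r\le\tfrac12$ is an arc of length $2r$ which, in the Euclidean picture where the Frostman condition \eqref{definition of Frostman measure} applies, is covered by at most two intervals each contained in a Euclidean ball of radius $r$; hence $\mu(B_\TT(x,r))\lesssim r^s$, and $\mu(B_\TT(x,r))\le\mu(\TT)\le1$ for all $r$. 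Since $s/t>1$ the $\lambda$-integral converges with a constant depending only on $s,t$, and integrating in $x$ gives $\mathcal{I}^{\TT}_t(\mu)\lesssim_{s,t}\mu(\TT)\le1$. With this replacement the rest of your argument goes through.
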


\section{Sobolev smoothing inequality}
The purpose of this section is to prove the Sobolev smoothing inequality, Theorem \ref{Thm_Sobolev}.
Relevant Sobolev smoothing inequalities have appeared in \cite{KMPW24} and \cite{DR24}, and are motivated by \cite{P19}. 
The current context, Theorem \ref{Thm_Sobolev}, is more similar to \cite[Theorem 2.1]{P19}, and can be viewed as a generalization to the continuous setting. However, several differences and difficulties are associated with such generalization, which we will explain in more detail in the following subsections. The overall idea still follows from \cite{P19}, but overcoming the technicalities for such generalization is the main contribution of this section.

\subsection{PET induction}
The goal of this subsection is to prove the following version of PET induction, which is suitable for further analysis later.
\begin{theorem} \label{Thm_PET_Gowers}
    Let $\delta \in (0,1]$, $ k \in \NN$, and $N \geq1$. Let $\mathcal{P}= \{P_1, \cdots, P_{k}\in \RR[y]\}$ be a collection of polynomials such that $1 \leq \text{deg}(P_1) < \cdots < \text{deg}(P_k)$. Then there exists $s=s(\mathcal{P})\in \NN$ such that
    \[
    \left|\Lambda_{\mathcal{P};N}(f_0,...,f_k)  \right| ^{O_{\mathcal{P}}(1)}\lesssim_{\mathcal{P}}
        \|f_k\|_{{U^s}(\TT)} ,
    \]
     for any 1-bounded functions $f_0, \cdots, f_k$.
\end{theorem}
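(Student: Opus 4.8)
The plan is to run a PET (Polynomial Exhaustion Technique) induction scheme, in the spirit of Bergelson--Leibman and as adapted to the Gowers-norm setting in \cite{P19} and \cite{KMPW24}, reducing the control of $\Lambda_{\mathcal{P};N}$ by a single Gowers norm $\|f_k\|_{U^s(\TT)}$ of the highest-degree function. First I would set up a suitable notion of the \emph{weight} (or \emph{type}) of a polynomial system: to a finite tuple of polynomials one associates a vector recording, roughly, the number of polynomials of each degree (after discarding a distinguished one and, in each van der Corput step, replacing $P_i(y)$ by the differenced polynomial $P_i(y+h)-P_i(y)$ or $P_i(y+h)-P_k(y+h)$ depending on bookkeeping). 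The key combinatorial fact, which is the standard engine of PET, is that this weight strictly decreases in a well-order under each van der Corput step, so the process terminates after $O_{\mathcal{P}}(1)$ steps in a system of linear polynomials; this determines the number $s=s(\mathcal{P})$.

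The analytic heart is a single van der Corput / Cauchy--Schwarz step. Starting from $\Lambda_{\mathcal{P};N}(f_0,\dots,f_k)$, I would first use Cauchy--Schwarz in $x$ (or the translation-invariance $x\mapsto x - P_k(y)$ to normalize the highest-degree term onto $f_k$, as in the finite-field argument) to eliminate $f_0$ and pass to an average of the form $\frac{1}{N^2}\int_0^N\int_0^N \int_\TT \prod_i \bar f_i(\dots) f_i(\dots)\,dx\,dy\,dy'$, then change variables $y' = y + h$ and average over $h$; after a further Cauchy--Schwarz this produces a multiplicative derivative $\triangle_h$ applied to each $f_i$ together with a new polynomial system of strictly smaller weight, where the inner-most polynomial attached to $f_k$ has had its degree dropped by one. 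Iterating, I arrive at an expression
\[
|\Lambda_{\mathcal{P};N}(f_0,\dots,f_k)|^{O_{\mathcal{P}}(1)} \lesssim_{\mathcal{P}} \int_{\TT^{s}} \left| \frac{1}{N}\int_0^N \int_\TT \triangle_{h_1,\dots,h_{s-1}} f_k(x + c\, y + \cdots)\, \prod (\text{lower-complexity factors}) \right| dx\,dh_1\cdots dh_{s-1} \,+\, (\text{error}),
\]
at which point the innermost polynomial in $y$ attached to $f_k$ is linear (say $= c_h\, y$ for a nonzero constant $c_h$ depending on the $h_j$'s), and the $y$-average over $[0,N]$ together with one last Cauchy--Schwarz in the $h_j$ variables converts the linear oscillation into one more multiplicative derivative, yielding exactly $\|f_k\|_{U^s(\TT)}^{2^s}$ up to the $O_{\mathcal{P}}(1)$ power.

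Two points need care, and I expect the main obstacle to be the first. \textbf{(a)} In the continuous (torus) setting the van der Corput steps are over the \emph{long} interval $[0,N]$ rather than a finite group, so each Cauchy--Schwarz in a shift variable $h$ runs over $[0,N]$ (or $[-N,N]$) and I must track that the number of such variables, and the resulting loss, remain $O_{\mathcal{P}}(1)$ uniformly in $N\ge 1$; crucially the normalizing factors $1/N$ must be matched correctly so that no power of $N$ is lost (this is why the statement has no $N$-dependence on the right-hand side). I would handle this by keeping every average as a genuine probability average in each variable, so that $1$-boundedness is preserved at each step and Cauchy--Schwarz introduces no constants beyond absolute ones. \textbf{(b)} At the terminal linear stage one must verify that the leading coefficient $c_h$ of the $y$-polynomial attached to $f_k$ is a nonzero polynomial in the $h_j$'s (so that it is nonzero for a.e.\ choice of $h_j$), which is where the hypothesis $\deg(P_1) < \cdots < \deg(P_k)$ enters: distinctness of degrees guarantees the differencing never annihilates the top term prematurely and the final linear coefficient is (up to a nonzero constant) $\prod_j h_j^{\deg P_k - 1}$ or similar, hence not identically zero; on the exceptional measure-zero set one bounds trivially by $1$. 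Assembling (a) and (b) with the PET termination count gives the claimed inequality with $s = s(\mathcal{P})$ and exponent $O_{\mathcal{P}}(1)$.
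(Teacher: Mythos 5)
There is a genuine gap in your proposal, and it is precisely at the point you flag as ``(a)'' but then wave past. In the continuous torus setting, after the van der Corput/Cauchy--Schwarz steps the shift variables $h_1,\dots,h_s$ do \emph{not} live in $\TT$: they are real numbers ranging over intervals $[-H_i,H_i]$ whose lengths scale like $\delta^{O_\mathcal{P}(1)}N^{\deg P_k}$. What PET therefore produces is not $\|f_k\|_{U^s(\TT)}$ but a Gowers $U^s$-\emph{box} norm, in which the $h_j$-averages are taken with respect to Fej\'er-kernel measures $d\nu_{[H_j]}$ on real intervals rather than Haar measure on $\TT$. Your fix (``keep every average a probability average so $1$-boundedness is preserved'') controls the constants in each Cauchy--Schwarz step, but it does nothing to convert those real-interval averages into $\TT$-averages; the final expression is simply not $\|f_k\|_{U^s(\TT)}^{2^s}$.

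The paper handles this in two pieces. First it quotes the PET output directly from \cite[Theorem 6.10]{KMPW24} (so it does not re-run the induction at all), obtaining a bound $\delta^{O_\mathcal{P}(1)}\lesssim_\mathcal{P}\|f_k\|_{\Box^s_{[H_1],\dots,[H_s]}(\TT)}$ with $H_i\simeq\delta^{O_\mathcal{P}(1)}N^{\deg P_k}$. The genuinely new ingredient is Lemma \ref{Lma_Box_to_Gowers}, a Fourier-analytic comparison
\[
\|f\|^{2^s}_{\Box^s_{[H_1],\dots,[H_s]}(\TT)}\lesssim\left(\frac{M^s}{\prod_{i=1}^sH_i}\right)\|f\|^{2^s}_{U^s(\TT)},\qquad 0<H_i<M,\ s\ge2,
\]
proved via Parseval against the Fej\'er kernels, an $\cL^1$-bound $\|\widehat{\nu_{[H_i]}}\|_{\cL^1}=1/H_i$, a periodicity/splitting of $[-M,M]^s$ into unit boxes, and the Gowers--Cauchy--Schwarz inequality. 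Choosing $M\sim N^{\deg P_k}$, the loss $M^s/\prod H_i\sim\delta^{-O_\mathcal{P}(1)}$ gets absorbed into the exponent of $\delta$, which is why the final inequality has no $N$ on the right. Your proposal contains no analogue of this step, and without it the argument does not close: you would be stuck with a box norm over intervals of size $N^{\deg P_k}$, not the torus Gowers norm the theorem asserts. (Your point (b), about the leading coefficient being generically nonzero thanks to distinct degrees, is a real ingredient of the underlying PET of \cite{KMPW24}, but it lives inside the black-boxed Theorem \ref{Thm_PET_Box}, not in this theorem's proof.)
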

\begin{remark}
    There are two distinctions between the finite field setting and the current continuous setting. The first difference is that in the current continuous setting, the $y$-variable lies in an interval, which is only an approximate group. This causes additional technicalities when applying the Cauchy-Schwarz inequality. The second difference is that in the conclusion of Theorem \ref{Thm_PET_Gowers}, there is no error term in the upper bound, which is essential for studying the Sobolev smoothing inequality of the unbounded "functions" in Section \ref{section 4}.
\end{remark}
 To deal with the approximate group structure of an interval, the needed machinery had been developed in \cite{KMPW24}, and we can use their results as a black box. We first introduce a variant of the Gowers norm.

\begin{definition}[Gowers $U^s$-box norm]
    Given any interval $I \subset \mathbb{R}$, define the Fejér kernel
    \[
    \kappa_I(x)=\frac{1}{|I|^2} 1_I \ast 1_{-I}(x),
    \] 
    and set
    \[
    d\nu_I(x):=\kappa_I(x) \, dx.
    \]
    Then for any integers $s\geq 1$, and intervals $I_1,...I_s$, the Gowers $U^s$-box norm of $f$ is defined by
    \[
    \|f\|_{\Box^s_{I_1,...I_s}(\TT)} ^{2^s} :=  \int_{\mathbb{R}^{s}} \int_{\TT}
    \triangle_{h_1,...,h_s}f(x)\, dx \,d\nu_{I_1}(h_1), ...,  d\nu_{I_s}(h_s).
    \]
\end{definition}

Here we record a variant PET induction result proved in \cite[Theorem 6.10]{KMPW24}

\begin{theorem} \label{Thm_PET_Box}
    Let $\delta \in (0,1]$, $ k \in \NN$, and $N \geq1$. Let $\mathcal{P}= \{P_1, \cdots, P_{k}\in \RR[y]\}$ be a collection of polynomials such that $1 \leq \text{deg}(P_1) < \cdots < \text{deg}(P_k)$, and let $f_0, \cdots, f_k$ be 1-bounded. Suppose that
    \[
    \delta \leq
    \left|\Lambda_{\mathcal{P};N}(f_0,...,f_k)  \right| .
    \]
    Then there exists\footnote{In practice, we may assume $s\geq 2$ by the monotonicity of the Gowers $U^s$-box norm.} $s=s(\mathcal{P})\in \NN$ such that
    \[
    \delta^{O_{\mathcal{P}}(1)}
    \lesssim_{\mathcal{P}}
    \|f_k\|_{\Box^s_{[H_1], \cdots,[H_s]}(\TT)},
    \]
    where $H_i \simeq \delta^{O_{\mathcal{P}}(1)} N^{\text{deg}(P_k)}$ for $i= 1, \cdots, s$.
\end{theorem}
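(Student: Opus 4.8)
The plan is to run a quantitative PET (``polynomial exhaustion technique'') induction in the spirit of Bergelson--Leibman \cite{BL96}, eliminating the functions $f_0,\dots,f_{k-1}$ one at a time by Cauchy--Schwarz and van der Corput, until only iterated multiplicative derivatives of $f_k$ survive, all of whose shifts are affine-linear in the remaining $y$-variable. First I would rescale $y\mapsto Ny$, so that
\[
\Lambda_{\mathcal{P};N}(f_0,\dots,f_k)=\int_0^1\!\int_{\TT}f_0(x)\prod_{i=1}^k f_i\big(x+\widetilde P_i(y)\big)\,dx\,dy,\qquad \widetilde P_i(y):=P_i(Ny),
\]
where $\widetilde P_i$ has degree $d_i:=\deg P_i$, leading coefficient of size $\simeq N^{d_i}$, and all coefficients $O_{\mathcal{P}}(N^{d_i})$. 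I would then set up the usual bookkeeping device of a \emph{polynomial system}: a finite list of pairs $(g,Q)$, $g$ a $1$-bounded function and $Q\in\RR[y]$, studied through an averaged integral $\frac1{|J|}\int_J\int_\TT\prod(g\text{ at shift }Q)\,dx\,dy$ over an interval $J\subset\RR$, with the extra shift variables $h_1,\dots,h_m$ introduced along the way carrying Fejér weights $d\nu_{[H]}$. To each system attach the PET \emph{weight vector} recording, degree by degree, how many of the pairwise differences $Q-Q'$ have each leading degree; this takes values in a well-ordered set, and a system is \emph{minimal} precisely when, after freezing the $h$-variables, every surviving $Q$ attached to $f_k$ is affine-linear in the innermost variable with leading coefficient of controlled size --- on a minimal system the residual $y$-integral is trivial and one reads off a $\Box^s$-norm of $f_k$. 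Because $d_k$ is strictly the largest degree, the pair $(f_k,\widetilde P_k)$ is always ``extremal'' in this ordering, which guarantees that $f_k$ (and only its derivatives) survives to the end.

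The single reduction step is: given a non-minimal system with $|\Lambda|\ge\delta$, normalise by $x\mapsto x-Q_{j_0}(y)$ so that some non-$f_k$ pair has polynomial $0$, apply Cauchy--Schwarz in $x$ to delete that function (its $1$-boundedness is exactly what lets it vanish at no cost), and then apply the van der Corput inequality in $y$: for a parameter $\eta=\delta^{O_{\mathcal{P}}(1)}$ and $\|F\|_{\cL^\infty}\le1$,
\[
\Big|\tfrac1{|J|}\int_J F\Big|^2\ \lesssim\ \tfrac1{\eta|J|}\int_{|h|\le\eta}\Big|\int_J F(y)\overline{F(y+h)}\,dy\Big|\,dh\ +\ \eta^2,
\]
the error being absorbable since $|\Lambda|\ge\delta$ once the fixed power is large enough. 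Re-standardising, this produces a new system with one more shift variable $h$ carrying the Fejér weight coming from the overlap $J\cap(J-h)$, with strictly smaller weight vector and with the lower bound degraded only by a fixed power of $\delta$. Since $J$ is merely an approximate group, $y$ and $y+h$ do not both lie in $J$ over the whole range, and it is exactly the autocorrelation $\mathbf 1_J\ast\mathbf 1_{-J}$ produced here that, upon iteration, becomes the measures $d\nu_I$ in the definition of $\|\cdot\|_{\Box^s}$.

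Iterating, the weight vector descends in the well-order at each stage, so after $s=s(\mathcal{P})\in\NN$ steps --- with $s$ bounded purely in terms of the degrees $d_1<\dots<d_k$ via the length of the PET tower --- the system is minimal, and one obtains $\delta^{O_{\mathcal{P}}(1)}\lesssim_{\mathcal{P}}$ an average over $h_1,\dots,h_s$ (and the now-trivial $y$-integral) of $\int_\TT\triangle_{h_1,\dots,h_s}f_k(x)\,dx$. The passage from this $h$-averaged derivative expression to the literal box norm is a change of variables matched to the Fejér weights together with one application of the Gowers--Cauchy--Schwarz inequality \eqref{Gowers-Cauchy-Schwarz inequality} (and, if needed, the monotonicity \eqref{Monotonicity} to take $s\ge2$). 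Undoing the rescaling $y\mapsto Ny$ converts the $h_i$-variables, which range over intervals of length $\simeq\delta^{O_{\mathcal{P}}(1)}$ in the rescaled picture, into intervals of length $\simeq\delta^{O_{\mathcal{P}}(1)}N^{d_k}$: the factor $N^{d_k}$ is forced because the polynomial differences that survive to the last step carry the leading coefficient of $\widetilde P_k$, of size $\simeq N^{d_k}$, and $d_k=\deg P_k$ dominates. This is exactly the asserted bound with $H_i\simeq\delta^{O_{\mathcal{P}}(1)}N^{\deg P_k}$.

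The main obstacle is not the combinatorics of the weight vector, which is classical PET, but the two intertwined technical points in the reduction step: carrying out each Cauchy--Schwarz over an \emph{interval} rather than a group, so that the Fejér weights are generated correctly and the van der Corput error terms remain absorbable by $\delta^{O_{\mathcal{P}}(1)}$ at every stage; and tracking all interval lengths and polynomial coefficients through the entire tower so that the final $s$ shift-intervals come out \emph{uniformly} of length $\simeq\delta^{O_{\mathcal{P}}(1)}N^{\deg P_k}$. It is precisely this uniform control that makes the output usable as a genuine box norm, and the clean formulation of this machinery is exactly what \cite[Theorem 6.10]{KMPW24} provides, so I would invoke it there rather than re-deriving it.
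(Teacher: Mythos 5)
Your proposal is correct and follows essentially the same route as the paper: the paper does not re-derive this statement either, but treats it as the torus analogue of \cite[Theorem 6.10]{KMPW24} (noting in a remark that the passage from $\RR$ to the compact group $\TT$ is essentially identical, and that distinct degrees place $\mathcal{P}$ in the class covered there via \cite[Remark 6.9]{KMPW24}). Your PET sketch --- rescaling, weight-vector descent via Cauchy--Schwarz/van der Corput, Fejér weights from interval autocorrelations, and the final box norm on $f_k$ at scale $\delta^{O_{\mathcal{P}}(1)}N^{\deg P_k}$ --- accurately summarizes the machinery you then, like the paper, invoke as a black box.
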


\begin{remark}
    We explain here the differences between Theorem \ref{Thm_PET_Box} and \cite[Theorem 6.10]{KMPW24}. Firstly, our definition of the Gowers $U^s$-box norm differs from the one defined in \cite{KMPW24} by the ambient space. In our setting, the ambient space is $\TT$, while in \cite{KMPW24}, the ambient space is, for example, $\RR$. However, we can use an essentially identical argument as in \cite[Theorem 6.10]{KMPW24} to prove Theorem \ref{Thm_PET_Box}. In fact, the argument in our setting can be further simplified since $\TT$ is an abelian compact group. Secondly, the original statement in \cite[Theorem 6.10]{KMPW24} holds for a broader class of collections of polynomials, and the authors pointed out in \cite[Remark 6.9]{KMPW24} that collections of polynomials with distinct degrees satisfy the condition in \cite[Theorem 6.10]{KMPW24}. Finally, the assumption of distinct degrees is essential for the current formulation, which is the primary reason we focus on a collection of polynomials with distinct degrees. One can likely prove a different version of the PET induction for a collection of linearly independent polynomials, but it requires additional ideas than those presented in \cite{KMPW24}. We hope to address this issue in the future.
\end{remark}

Finally, we need to pass from the Gowers $U^s$-box norm to the Gowers $U^s$-norm, and this is achieved in the following lemma. A similar idea has been presented in \cite[Lemma 5.1]{KMPW24}, albeit in a different context.

\begin{lemma} \label{Lma_Box_to_Gowers}
    Let $M \in \NN$, and $0 < H_i < M$, for $i= 1, \cdots, s$. Let $f $ be 1-bounded. Then
    \[
    \|f\|^{2^s}_{\Box^s_{[H_1], \cdots,[H_s]}(\TT)}
    \lesssim \left(\frac{M^s}{\prod_{i=1}^s H_i} \right) \|f\|^{2^s}_{U^s(\TT)}
    \]
    for any $s \geq 2$.
\end{lemma}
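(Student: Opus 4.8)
The plan is to compare the two averages term by term in the defining integrals, exploiting that the Fejér kernel $\kappa_{[H_i]}$ is a probability density supported on $(-H_i,H_i)$ while the $U^s$-norm uses a uniform average over $[0,1]\cong\TT$ in each shift variable. First I would recall that on $\TT$ the $U^s$-norm can be written by averaging each $h_i$ uniformly over the whole circle, and that $\triangle_{h_1,\dots,h_s}f(x)$ is a $1$-bounded function of $(x,h_1,\dots,h_s)$. The key observation is that the nonnegativity structure of the Gowers inner product — more precisely, the fact that after expanding $\triangle_{h_1,\dots,h_s}f$ and integrating out $x$ and all but one of the $h_j$, one obtains a quantity that is nonnegative in the remaining variable — lets us dominate a localized (Fejér-weighted) average by a constant multiple of the global average, the constant being the reciprocal of the mass that the Fejér kernel places relative to Lebesgue measure on an interval of length comparable to $1$.

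The cleanest route is induction on $s$, peeling off one shift variable at a time. Fix all variables except $h_s$; the inner quantity
\[
G(h_s):=\int_{\TT^{s}} \triangle_{h_1,\dots,h_{s-1}}\big(\triangle_{h_s}f\big)(x)\, dx\, d\nu_{[H_1]}(h_1)\cdots d\nu_{[H_{s-1}]}(h_{s-1})
\]
is, by the Gowers--Cauchy--Schwarz machinery and the case of the inequality for $U^{s-1}$, a nonnegative function of $h_s$ (it is, up to the inductive hypothesis, essentially a $U^{s-1}$-type expression of $\triangle_{h_s}f$). Then, writing $\kappa_{[H_s]}(h_s)\le \frac{1}{H_s}\,\ind_{(-H_s,H_s)}(h_s)\le \frac{M}{H_s}\,\ind_{\TT}(h_s)$ once we pass to the circle (using $H_s<M$ and periodizing appropriately, noting $H_s\ge 1$ so the interval wraps controllably), we get
\[
\int_{\RR}\kappa_{[H_s]}(h_s)\,G(h_s)\,dh_s \;\lesssim\; \frac{M}{H_s}\int_{\TT} G(h_s)\,dh_s,
\]
and iterating over $i=s,s-1,\dots,1$ produces the factor $\prod_{i=1}^s (M/H_i)=M^s/\prod H_i$ with $\int_{\TT^{s+1}}\triangle_{h_1,\dots,h_s}f(x)\,dx\,dh_1\cdots dh_s=\|f\|_{U^s(\TT)}^{2^s}$ emerging at the end. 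One must be a little careful that $\kappa_{[H_i]}$ is a kernel on $\RR$ while $U^s$ lives on $\TT$: since $\triangle_{h_1,\dots,h_s}f(x)$ depends on each $h_i$ only through its class mod $1$, the $\RR$-integral against $\kappa_{[H_i]}$ equals the $\TT$-integral against the periodization $\sum_{n\in\ZZ}\kappa_{[H_i]}(\cdot+n)$, and this periodized kernel is pointwise $\le M/H_i$ on $\TT$ when $H_i<M$ (there are at most $\lceil 2H_i\rceil\le 2M$ lattice translates contributing, each bounded by $1/H_i$, and one absorbs the resulting $O(1)$ into the $\lesssim$).

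The main obstacle — really the only subtle point — is justifying the pointwise monotonicity step: replacing the localized average of $G(h_s)$ by the global average is legitimate precisely because $G\ge 0$, and establishing $G\ge 0$ requires recognizing the partially-integrated Gowers expression as a genuine $2^{s-1}$-th power (an $L^{2^{s-1}}$ norm to the $2^{s-1}$) of a single function obtained by fixing $h_s$, which is the standard fact underlying the Gowers--Cauchy--Schwarz inequality \eqref{Gowers-Cauchy-Schwarz inequality}. Concretely, for $s\ge 2$ one writes $\|f\|_{\Box^s}^{2^s}=\int \|\triangle_{h_s}f\|_{\Box^{s-1}_{[H_1],\dots,[H_{s-1}]}(\TT)}^{2^{s-1}}\,d\nu_{[H_s]}(h_s)$, applies the inductive hypothesis inside the integral, and then uses that $h_s\mapsto \|\triangle_{h_s}f\|_{U^{s-1}(\TT)}^{2^{s-1}}$ is nonnegative and integrates to $\|f\|_{U^s(\TT)}^{2^s}$ over $\TT$; the base case $s=1$ (or $s=2$, matching the footnote's convention) is the one-variable statement $\int \kappa_{[H_1]}(h)\,\|\triangle_h f\|_{L^2(\TT)}^2\,dh\lesssim (M/H_1)\int_{\TT}\|\triangle_h f\|_{L^2(\TT)}^2\,dh$, which is immediate from the kernel bound since the integrand is $\ge 0$. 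Everything else is bookkeeping of the $2^s$-th powers.
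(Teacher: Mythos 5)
Your overall idea — exploit nonnegativity of the partially-averaged Gowers object to replace each Fejér weight by $(M/H_i)$ times Lebesgue measure on $\TT$ — is a genuinely different route from the paper's (which cuts off to $[-M,M]^s$, passes to the Fourier transform on $\RR^s$, uses $\|\widehat{\nu_{[H_i]}}\|_{L^1(\RR)}=1/H_i$, and bounds $\sup|\widehat{F}|$ by $(2M)^s\|f\|_{U^s}^{2^s}$ via Gowers--Cauchy--Schwarz applied to modulated copies of $f$). Your route is more elementary and avoids both the cutoff and the modulation step. However, as written it has a real gap in the induction, and the base case you state is not the statement you need.

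The gap is in the step ``apply the inductive hypothesis inside the integral.'' You write $\|f\|_{\Box^s}^{2^s}=\int_{\RR}\|\triangle_{h_s}f\|_{\Box^{s-1}_{[H_1],\dots,[H_{s-1}]}}^{2^{s-1}}\,d\nu_{[H_s]}(h_s)$ and then invoke the lemma for $s-1$ to replace the inner $\Box^{s-1}$ by $\frac{M^{s-1}}{\prod_{i<s}H_i}U^{s-1}$. For $s=2$ this needs the $s=1$ version of the lemma, namely $\|g\|_{\Box^1_{[H_1]}}^2\lesssim \frac{M}{H_1}\|g\|_{U^1(\TT)}^2$, and that inequality is \emph{false}: in Fourier terms $\|g\|_{\Box^1_{[H_1]}}^2=\sum_{\xi}|\widehat g(\xi)|^2\widehat{\kappa_{[H_1]}}(\xi)$ while $\|g\|_{U^1}^2=|\widehat g(0)|^2$, so taking $g(x)=e(x)$ gives a nonzero left side and a vanishing right side whenever $H_1\notin\ZZ$. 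Relatedly, the ``one-variable base case'' you state, $\int\kappa_{[H_1]}(h)\|\triangle_h f\|_{L^2}^2\,dh\lesssim(M/H_1)\int_{\TT}\|\triangle_h f\|_{L^2}^2\,dh$, is about $\int_\TT|\triangle_h f(x)|^2\,dx$ rather than $\bigl|\int_\TT\triangle_h f\bigr|^2$; it is trivially true for $1$-bounded $f$ but is not the $s=1$ or $s=2$ instance of the lemma and does not feed the induction.

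The fix keeps your peeling philosophy but dispenses with the inductive hypothesis. After the first peel use Fubini rather than the IH: the object $|\int_\TT\triangle_{h_1,\dots,h_{s-1}}f|^2=\int_\TT\int_\TT\triangle_{h_1,\dots,h_s}f\,dx\,dh_s$ is a square, hence pointwise nonnegative in $h_{s-1}$, so you may peel $\nu_{[H_{s-1}]}$ next; after each peel another Fubini gives $\int_\TT\cdots\int_\TT\bigl|\int_\TT\triangle_{h_1,\dots,h_{s-1}}f\bigr|^2\,dh_{j+1}\cdots dh_{s-1}$, still pointwise nonnegative in $h_j$, so the process iterates down to $j=1$. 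After all $s$ peels you land on $\int_{\TT^{s-1}}|\int_\TT\triangle_{h_1,\dots,h_{s-1}}f|^2\,dh_1\cdots dh_{s-1}=\|f\|_{U^s(\TT)}^{2^s}$, with the factor $\prod_i(M/H_i)$ accumulated along the way, and the argument is self-contained for every $s\geq 2$ (for $s=1$ the very first integrand $\int_\TT\triangle_{h_1}f$ is not nonnegative, which is exactly why the lemma excludes $s=1$). Your periodization bound $\|\tilde\kappa_{[H_i]}\|_{L^\infty(\TT)}\lesssim M/H_i$ (valid for all $0<H_i<M$, not just $H_i\geq 1$) is the correct replacement weight; note in fact $\|\tilde\kappa_{[H_i]}\|_{L^\infty(\TT)}\lesssim\max(1,1/H_i)$, which is even stronger when $H_i\geq 1$, but the lemma only asks for $M/H_i$.
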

\begin{proof}[Proof of Lemma \ref{Lma_Box_to_Gowers}]
Set
\[
F(h_1,...,h_s):=1_{\{h_i \in [-M,M]: \, i=1,\cdots,s\}}(h_1,...,h_s)\cdot
\int_{\TT} \triangle_{h_1, \cdots, h_s}f(x) \,dx .
\]
Then we have
\[
\|f\|^{2^s}_{\Box^s_{[H_1], \cdots,[H_s]}(\TT)}=
\int_{\RR^{s}} F(h_1,...,h_s) \,d\nu_{[H_1]}(h_1), ...,  d\nu_{[H_s]}(h_s)
\]
\[
=\int_{\RR^{s}} \widehat{F}(\xi_1,...,\xi_s) \overline{\widehat{\nu_{[H_1]}}(\xi_1)} \cdots \overline{\widehat{\nu_{[H_s]}}(\xi_s)} \, d\xi_1, ...,\,d\xi_s,
\]
where the last line follows from Parseval's identity. Therefore,
\[
\|f\|^{2^s}_{\Box^s_{[H_1], \cdots,[H_s]}(\TT)} \leq 
\prod_{i=1}^s \|\widehat{\nu_{[H_i]}}\|_{\cL^1(\RR)} \cdot \sup_{\xi_1,\cdots,\xi_s \in \RR} \left| \widehat{F}(\xi_1,...,\xi_s) \right|.
\]
First, note that 
\[
\|\widehat{\nu_{[H_i]}}\|_{\cL^1(\RR)}=\frac{1}{H_i^2}\|\widehat{1_{[H_i]}}\|^2_{\cL^2(\RR)}=\frac{1}{H_i^2}\|1_{[H_i]}\|^2_{\cL^2(\RR)}=\frac{1}{H_i}.
\]
Next, if we set
\[
f_{\underline{w}}(x)=
\left\{
\begin{matrix}
    e\left((\sum_{i=1}^s \xi_i)x\right)f(x), &\text{if}\, w_j=0, \forall j\in \{1,\cdots,s\}. \\
    e(\xi_ix)f(x), &\text{if}\, w_i=1,\, \text{and}\, w_j=0, \forall j\in \{1,\cdots,s\}-\{i\}.\\
    f(x), &\text{otherwise.}\\
\end{matrix}
\right.
\]
Then, we have
\[
\left|
\widehat{F}(\xi_1,...,\xi_s) \right|
= \left|
\int_{[-M,M]^s} \int_{\TT} e(-\sum_{i=1}^s \xi_i h_i) \cdot\prod_{\underline{w}\in\{0,1\}^s} \cC^{|\underline{w}|}f(x+\underline{w}\cdot \underline{h})  \,dx \, dh_1,...,\,dh_s \right|
\]
\[
=\left| \int_{[-M,M]^s} \int_{\TT}  \prod_{\underline{w}\in\{0,1\}^s} \cC^{|\underline{w}|}f_{\underline{w}}(x+\underline{w}\cdot \underline{h})  \,dx \, dh_1,...,\,dh_s\right|
\]
\[
\leq \sum_{i_1=0}^{2M-1} \cdots \sum_{i_s=0}^{2M-1}
\left| \int_{-M+i_1}^{-M+i_1+1}\cdots \int_{-M+i_s}^{-M+i_s+1}
\int_{\TT}  \prod_{\underline{w}\in\{0,1\}^s} \cC^{|\underline{w}|}f_{\underline{w}}(x+\underline{w}\cdot \underline{h})  \,dx \, dh_1,...,\,dh_s\right|
\]
\[
=(2M)^s \cdot \left| 
\int_{\TT^{s+1}}  \prod_{\underline{w}\in\{0,1\}^s} \cC^{|\underline{w}|}f_{\underline{w}}(x+\underline{w}\cdot \underline{h})  \,dx \, dh_1,...,\,dh_s\right|
\]
\[
\leq (2M)^s \cdot \prod_{\underline{w}\in\{0,1\}^s} \|f_{\underline{w}}\|_{U^s(\TT)},
\]
where we use the Gowers-Cauchy-Schwarz inequality \eqref{Gowers-Cauchy-Schwarz inequality} in the last step.\\
Finally, note that when $s\geq 2$, we have
\[
\|f_{\underline{w}}\|_{U^s(\TT)}= \|f\|_{U^s(\TT)},
\]
for every $\underline{w}\in\{0,1\}^s$, since $\|e(\xi\cdot)\|_{U^s(\TT)}=1$ for any $\xi \in \RR$. Combining everything so far, we have shown that
\[
    \|f\|^{2^s}_{\Box^s_{[H_1], \cdots,[H_s]}(\TT)}
    \leq \left(\frac{(2M)^s}{\prod_{i=1}^s H_i} \right) \|f\|^{2^s}_{U^s(\TT)},
\]
and hence, the proof is complete.
\end{proof}

Now, we are ready to prove Theorem \ref{Thm_PET_Gowers}.
\begin{proof}[Proof of Theorem \ref{Thm_PET_Gowers}]
    We set
    \[
    \delta =
    \left|\Lambda_{\mathcal{P};N}(f_0,...,f_k)  \right|,
    \]
    and assume that $ \delta \neq 0$; otherwise, the conclusion is obvious. By Theorem \ref{Thm_PET_Box}, there exists $s\geq 2$ such that
    \[
    \delta^{O_{\mathcal{P}}(1)}
    \lesssim_{\mathcal{P}}
    \|f_k\|_{\Box^s_{[H_1], \cdots,[H_s]}(\TT)},
    \]
    where $H_i \simeq \delta^{O_{\mathcal{P}}(1)} N^{\text{deg}(P_k)}$ for $i= 1, \cdots, s$. Next, by Lemma \ref{Lma_Box_to_Gowers},
     \[
    \|f_k\|^{2^s}_{\Box^s_{[H_1], \cdots,[H_s]}(\TT)}
    \lesssim \left(\frac{N^{s\cdot\text{deg}(P_k)}}{\prod_{i=1}^s H_i} \right) \|f_k\|^{2^s}_{U^s(\TT)},
    \]
    where we choose $M \sim N^{\text{deg}(P_k)}$, since $H_i \lesssim N^{\text{deg}(P_k)}$. Combining the two inequalities above, we have
    \[
    \delta^{O_{\mathcal{P}}(1)} \lesssim_{\mathcal{P}}  \left(\frac{N^{s\cdot\text{deg}(P_k)}}{\prod_{i=1}^s H_i} \right) \|f_k\|^{2^s}_{U^s(\TT)}.
    \]
    Therefore,
    \[
    \delta^{O_{\mathcal{P}}(1)} \sim \delta^{O_{\mathcal{P}}(1)} \cdot \left(\frac{N^{s\cdot\text{deg}(P_k)}}{\prod_{i=1}^s H_i} \right)^{-1} \lesssim_{\mathcal{P}} \|f_k\|^{2^s}_{U^s(\TT)},
    \]
    and this completes the proof.
\end{proof}

\subsection{Preparatory lemmas}
In this subsection, we introduce the dual function, which plays a crucial role in the study of the polynomial Szemerédi theorem, and present several useful lemmas that will be utilized throughout this section.

Define the i-th dual function as follows:
\[
    F^i(x)=F^i_{\mathcal{P};N}(x):= \frac{1}{N} \int_0^N \prod_{\substack{j=0,\cdots, k \\ j \neq i}} f_j(x+P_j(y)-P_i(y)) \,dy,
\]
and set
    \[
    F^i_y(x):=\prod_{\substack{j=0,\cdots, k \\ j \neq i}} f_j(x+P_j(y)-P_i(y)),
    \]
    where we denote $P_0(y)=0$ for the sake of simplicity.
From the definition of the dual function, we have 
\begin{equation} \label{identity for the pairing}
    \Lambda_{\mathcal{P};N}(f_0,...,f_k)= \int_{\TT} f_i(x)F^i(x) \, dx.
\end{equation}
The following lemma is known as the dualization step, which replaces the use of the Hahn-Banach decomposition in \cite{P19}. To the author's knowledge, this idea first appeared in \cite{PP24}. The proof is an easy consequence of the Cauchy-Schwarz inequality when applied to \eqref{identity for the pairing}.
\begin{lemma} \label{Lma_Dual control}
    Let $k \in \NN$. Then for any $i \in \{0,\cdots,k\}$,
    \[
    |\Lambda_{\mathcal{P};N}(f_0,...,f_k)|^2 \leq 
    \Lambda_{\mathcal{P};N}(f_0,...,\overline{F^i},...,f_k),
    \]
    for any 1-bounded functions $f_0, \cdots, f_k$.
\end{lemma}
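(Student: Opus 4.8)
The plan is to apply the Cauchy--Schwarz inequality directly to the pairing identity \eqref{identity for the pairing}, exactly in the spirit of the dualization steps of \cite{P19} and \cite{PP24}. Fix $i \in \{0,\dots,k\}$. By \eqref{identity for the pairing} we have $\Lambda_{\mathcal{P};N}(f_0,\dots,f_k) = \int_{\TT} f_i(x)\,F^i(x)\,dx$, so Cauchy--Schwarz in $\cL^2(\TT)$ together with the fact that $f_i$ is $1$-bounded and $\TT$ has total mass $1$ (hence $\|f_i\|_{\cL^2(\TT)} \leq 1$) yields
\[
\left|\Lambda_{\mathcal{P};N}(f_0,\dots,f_k)\right|^2 \leq \|f_i\|_{\cL^2(\TT)}^2 \,\|F^i\|_{\cL^2(\TT)}^2 \leq \|F^i\|_{\cL^2(\TT)}^2 .
\]

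The second step is to recognize $\|F^i\|_{\cL^2(\TT)}^2 = \int_{\TT} \overline{F^i(x)}\,F^i(x)\,dx$ as the counting operator with $\overline{F^i}$ inserted into the $i$-th slot. The key observation is that the $i$-th dual function $F^i$ is built only from the inputs $f_j$ with $j \neq i$ --- the $i$-th slot does not appear in the product defining $F^i$ --- so replacing $f_i$ by $\overline{F^i}$ leaves the $i$-th dual function unchanged. Applying \eqref{identity for the pairing} to the tuple $(f_0,\dots,\overline{F^i},\dots,f_k)$ at index $i$ therefore produces exactly $\int_{\TT} \overline{F^i(x)}\,F^i(x)\,dx = \|F^i\|_{\cL^2(\TT)}^2$, which is moreover manifestly nonnegative, so it equals $\Lambda_{\mathcal{P};N}(f_0,\dots,\overline{F^i},\dots,f_k)$. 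Combining the two displays gives the claim.

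There is no real obstacle here: the whole content is the interplay between \eqref{identity for the pairing} and Cauchy--Schwarz. The only point requiring a little care is the bookkeeping of conjugates --- one must insert $\overline{F^i}$ rather than $F^i$ into slot $i$ so that the pairing reconstructs $\int_{\TT}|F^i|^2$ rather than $\int_{\TT}(F^i)^2$ --- together with the elementary check that the dual function $F^i$ genuinely does not depend on the $i$-th input, which is immediate from its defining formula.
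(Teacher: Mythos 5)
Your proof is correct and is exactly the route the paper indicates: Cauchy--Schwarz applied to the pairing identity \eqref{identity for the pairing}, using $\|f_i\|_{\cL^2(\TT)}\leq 1$ and the observation that $F^i$ does not depend on the $i$-th input. No differences to note.
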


The following lemma, the dual difference interchange step, is one of the essential discoveries in \cite{P19}. We will use the following continuous version, which is almost identical to the original proof in the finite field setting, as in \cite[Lemma 5.1]{P19}, with repeated applications of the Cauchy-Schwarz inequality. 

\begin{lemma} \label{Lma_Dual Difference Interchange}
    Let $F_y(x)=F(x,y)\in \cL(\TT \times \RR)$ be 1-bounded.
    Set
    \[
    F(x):= \frac{1}{N} \int_0^N F_y(x) \, dy.
    \]
    Then for all $s \geq 2$,
    \[
    \|F\|^{2^{2s-2}}_{U^s(\TT)} \leq 
    \int_{\TT^{s-2}} \| \frac{1}{N} \int_0^N \triangle_{h_1, \cdots, h_{s-2}}F_y(\cdot) \, dy \|^4_{U^2(\TT)} \, dh_1,..., dh_{s-2}.
    \]
\end{lemma}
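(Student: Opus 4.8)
The plan is to mimic, in the continuous setting, the iterated Cauchy--Schwarz argument of Peluse \cite[Lemma~5.1]{P19}. The feature that makes the transfer painless is that the difference variables $h_1,\dots,h_{s-2}$ get integrated over all of $\TT$, so — in contrast with the PET step — the normalized integral $\frac1N\int_0^N\cdot\,dy$ behaves exactly like an average on a probability space as far as Cauchy--Schwarz and Jensen's inequality are concerned, and there is no approximate-group issue. I would induct on $s\ge 2$, proving the statement in the slightly stronger form in which $F$ is permitted to be an average $F=|Q|^{-1}\int_Q F_{\bu}\,d\bu$ of $1$-bounded functions $\{F_{\bu}\}_{\bu\in Q}$ over a box $Q=[0,N]^m$; the case $m=1$ is the one asserted, but the inductive step doubles $m$, so this generality is needed and it is obtained verbatim. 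The base case $s=2$ is the equality $\|F\|_{U^2(\TT)}^4=\bigl\|\frac1N\int_0^N F_y\,dy\bigr\|_{U^2(\TT)}^4$.

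For the inductive step from $s-1$ to $s$, first unfold one derivative by the nested structure of the Gowers norm, $\|F\|_{U^s(\TT)}^{2^s}=\int_\TT\|\triangle_h F\|_{U^{s-1}(\TT)}^{2^{s-1}}\,dh$. Since $\triangle_h$ is multiplicative, $\triangle_h F(x)=F(x)\overline{F(x+h)}$ is itself an average of $1$-bounded functions, now over the doubled box: $\triangle_h F=|Q|^{-2}\int_{Q\times Q}F_{\bu}(\cdot)\,\overline{F_{\bv}(\cdot+h)}\,d\bu\,d\bv$. Applying the inductive hypothesis to it, and using that $\triangle_{h_1,\dots,h_{s-3}}$ is multiplicative, conjugation-compatible and translation-equivariant, the inner average it produces simplifies to $\triangle_h A_{\underline{h}'}$, where $\underline{h}'=(h_1,\dots,h_{s-3})$ and $A_{\underline{h}'}:=|Q|^{-1}\int_Q\triangle_{h_1,\dots,h_{s-3}}F_{\bu}\,d\bu$ is again a dual-type object. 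Feeding this back and performing the exponent arithmetic exactly as in \cite{P19} (raise to a power and use Jensen, then the inductive hypothesis, then square and apply Cauchy--Schwarz) leads to
\[
\|F\|_{U^s(\TT)}^{2^{2s-2}}\ \le\ \int_{\TT^{s-2}}\bigl\|\triangle_{h_{s-2}}\bigl[A_{\underline{h}'}\bigr]\bigr\|_{U^2(\TT)}^{8}\,dh_1\cdots dh_{s-2}.
\]
It then remains to push the last derivative $\triangle_{h_{s-2}}$ \emph{inside} the average defining $A_{\underline{h}'}$, trading the exponent $8$ for $4$; that is, to prove the ``diagonal collapse'' inequality
\[
\int_\TT\|\triangle_h A\|_{U^2(\TT)}^{8}\,dh\ \le\ \int_\TT\Bigl\|\tfrac1N{\textstyle\int_0^N}\triangle_h A_y\,dy\Bigr\|_{U^2(\TT)}^{4}\,dh
\]
for any $1$-bounded family $(A_y)_{y\in[0,N]}$ with $A=\frac1N\int_0^N A_y\,dy$. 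Applying it with $A_y=\triangle_{h_1,\dots,h_{s-3}}F_y$, so that $\frac1N\int_0^N\triangle_{h_{s-2}}A_y\,dy=\frac1N\int_0^N\triangle_{h_1,\dots,h_{s-2}}F_y\,dy$, then closes the induction.

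The diagonal collapse inequality I would prove by Fourier-expanding in the variable $h$. Standard identities for the $U^2$ norm (e.g.\ $\|g\|_{U^2(\TT)}^4=\int_\TT\bigl|\int_\TT g(x)\overline{g(x+t)}\,dx\bigr|^2 dt$) together with the multiplicativity of $\triangle$ show that both $h\mapsto\|\triangle_h A\|_{U^2(\TT)}^4$ and $h\mapsto\bigl\|\frac1N\int_0^N\triangle_h A_y\,dy\bigr\|_{U^2(\TT)}^4$ are positive-definite functions of $h$ — their $h$-Fourier coefficients are manifestly nonnegative — say $\sum_\eta C_\eta e(-\eta h)$ and $\sum_\eta D_\eta e(-\eta h)$ with $C_\eta,D_\eta\ge 0$. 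Integrating in $h$, the left side of the claimed inequality becomes $\sum_\eta C_\eta C_{-\eta}\le\sum_\eta C_\eta^2$, while the right side becomes $D_0$. A vertex-wise Cauchy--Schwarz in the two averaging variables gives $|\widehat{\triangle_t A}(\xi)|^2\le\frac1{N^2}\int_{[0,N]^2}\bigl|\widehat{A_y\,\overline{A_{y'}(\cdot+t)}}(\xi)\bigr|^2\,dy\,dy'$ pointwise in $\xi,t$, whence $C_0=\int_\TT\|\triangle_t A\|_{U^2(\TT)}^4\,dt\le D_0$; and the crude bounds $\sup_\eta C_\eta\le C_0$ and $\sum_\eta C_\eta=\int_\TT\|\triangle_t A\|_{\cL^2(\TT)}^4\,dt\le 1$ yield $\sum_\eta C_\eta^2\le C_0\sum_\eta C_\eta\le C_0\le D_0$, as needed.

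I expect the diagonal collapse inequality to be the technical heart of the argument; everything else is the bookkeeping of Cauchy--Schwarz applications and exponents, which follows \cite{P19} once one accepts $\frac1N\int_0^N\cdot\,dy$ in the role of $\EE_{b\in\FF_p}$. The only genuinely new (and harmless) feature compared with the finite-field proof is the need to carry the induction for box-averages over $[0,N]^m$.
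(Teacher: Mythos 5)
Your proposal is a genuine proof, and a genuinely different one from what the paper cites: the paper simply defers to Peluse's finite-field argument \cite[Lemma~5.1]{P19}, which runs $s-2$ successive Cauchy--Schwarz steps on the Gowers inner product, each step squaring the exponent and collapsing one set of averaging variables to the diagonal. You instead organize the argument as an induction on $s$ over box-averages, with a standalone ``diagonal collapse'' lemma at the bottom. The inductive bookkeeping ($\|F\|_{U^s}^{2^{2s-2}} \le \int_{\TT^{s-2}} \|\triangle_{h_{s-2}} A_{\underline{h}'}\|_{U^2}^8\,d\underline{h}$ via two applications of Jensen and the multiplicativity of $\triangle$) is correct, and the base case $s=2$ is indeed an identity. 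This modular structure is a real, if modest, advantage over the direct iterated Cauchy--Schwarz, and your observation that $\frac1N\int_0^N$ behaves as a genuine probability average here --- so no approximate-group issues arise --- is exactly right.

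There are two points where the write-up of the diagonal-collapse step is thinner than it should be, one cosmetic and one substantive. First, the positive-definiteness detour is unnecessary: since $\|\triangle_hA\|_{U^2}^4 \le \|\triangle_hA\|_{\cL^2}^4 \le 1$, one has $\int_\TT\|\triangle_hA\|_{U^2}^8\,dh \le \int_\TT\|\triangle_hA\|_{U^2}^4\,dh = C_0$ directly, so the chain $\sum C_\eta^2 \le C_0 \sum C_\eta \le C_0$ can simply be replaced by $1$-boundedness, and only the inequality $C_0 \le D_0$ remains to be proved. Second, and more importantly, the justification of $C_0 \le D_0$ --- ``vertex-wise Cauchy--Schwarz gives $|\widehat{\triangle_tA}(\xi)|^2 \le \frac1{N^2}\int\int|\widehat{A_y\overline{A_{y'}(\cdot+t)}}(\xi)|^2\,dy\,dy'$, whence $C_0 \le D_0$'' --- is not self-evident: the right-hand side of that pointwise bound involves the full two-parameter average in $(y,y')$, whereas $D_0$ involves the diagonal $y=y'$, and the deduction is not a one-line consequence. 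It does in fact go through, because after summing in $\xi$ and integrating in $t$ the upper bound turns out to \emph{equal} $D_0$ (one must recognize $\frac1{N^2}\int\int|\widehat{A_y\overline{A_{y'}(\cdot+t)}}(\xi)|^2\,dy\,dy'$ as a $u$-Fourier coefficient, apply Parseval in $u$, and then use the $t\leftrightarrow u$ symmetry of the resulting two-variable kernel), but none of this is visible from the word ``whence.'' A cleaner route to $C_0 \le D_0$ is the one Peluse's argument implicitly uses: write
\[
C_0 \;=\; \|A\|_{U^3}^8 \;=\; \int_{\TT^2}\Bigl|\int_\TT \triangle_{h_1,h_2}A(x)\,dx\Bigr|^2\,dh_1\,dh_2,
\]
expand $\triangle_{h_1,h_2}A(x) = \frac{1}{N^4}\int_{[0,N]^4}\prod_{\underline{w}\in\{0,1\}^2}\cC^{|\underline{w}|}A_{y_{\underline w}}(x+\underline w\cdot\underline h)\,d\underline y$ as a four-fold average, and apply Jensen in $\underline y$ inside the absolute value; the resulting expression is precisely $D_0$. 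I would recommend either inserting that one-line Jensen computation or filling in the Parseval/symmetry step --- as currently phrased, the crux of the diagonal collapse is asserted rather than proved.
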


The final lemma below distinguishes between the finite field setting and the continuous setting, playing another vital role in proving the Sobolev inequality of the unbounded "functions" in Section \ref{section 4}.
The current formulation is an adaptation of \cite[Lemma 4]{DR24}.

\begin{lemma} \label{Lma_PJ}
    Let $s \in \NN$. Then for any $\sigma>0$, there exists $c=c(s, \sigma)>0$ such that
    \[
    \int_{\TT^{s}} \|\triangle_{h_1, \cdots, h_{s}}f\|^{2}_{\cH^{-\sigma}(\TT)} \, dh_1,...,dh_{s}
    \lesssim \|f\|^c_{U^{s+1}(\TT)},
    \]
    for any 1-bounded function $f$.
\end{lemma}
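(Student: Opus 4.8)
The plan is to unfold the $\cH^{-\sigma}$ norm on the Fourier side, average in the shift variables $\underline h$, and collapse the quantity in question to a single weighted sum of \emph{nonnegative} Fourier coefficients of the ``height function'' $g(t):=\|\triangle_t f\|_{U^s(\TT)}^{2^s}$; an elementary dyadic split in the frequency variable then finishes the proof.

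Since every term is nonnegative, Tonelli's theorem gives
\[
\int_{\TT^{s}} \|\triangle_{\underline h}f\|^{2}_{\cH^{-\sigma}(\TT)} \, d\underline h
= \sum_{\xi \in \ZZ} (1+|\xi|^2)^{-\sigma/2} \int_{\TT^s} \bigl|\widehat{\triangle_{\underline h}f}(\xi)\bigr|^{2} \, d\underline h .
\]
The key computation is that the inner average is itself a Fourier coefficient of $g$. Writing $\bigl|\widehat{\triangle_{\underline h}f}(\xi)\bigr|^2=\int_{\TT^2} e(-\xi(x-x'))\,\triangle_{\underline h}f(x)\,\overline{\triangle_{\underline h}f(x')}\,dx\,dx'$, substituting $x'=x+t$, and using the identity $\triangle_{\underline h}f(x)\,\overline{\triangle_{\underline h}f(x+t)}=\triangle_{h_1,\dots,h_s,t}f(x)$ (pairing $\triangle_{\underline h}f$ with a translate of itself produces exactly one more multiplicative derivative, the mechanism behind the Gowers-norm recursion), one obtains, after a harmless reordering of the bounded integrals,
\[
\int_{\TT^s} \bigl|\widehat{\triangle_{\underline h}f}(\xi)\bigr|^{2}\,d\underline h=\widehat{g}(-\xi),
\qquad\text{with}\quad g(t):=\|\triangle_t f\|_{U^s(\TT)}^{2^s}.
\]
Since the left-hand side is manifestly nonnegative, $\widehat{g}(-\xi)\ge 0$ for every $\xi\in\ZZ$; moreover $g\in\cL^\infty(\TT)$ is nonnegative because $f$ is $1$-bounded.

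It then remains to estimate $\sum_{\xi\in\ZZ}(1+|\xi|^2)^{-\sigma/2}\widehat{g}(-\xi)$ from two facts. First, since $g\ge 0$, $0\le\widehat{g}(-\xi)\le\widehat{g}(0)=\int_\TT g=\|f\|_{U^{s+1}(\TT)}^{2^{s+1}}=:\theta$. Second, summing the displayed identity over $\xi$ and invoking Parseval, $\sum_{\xi\in\ZZ}\widehat{g}(-\xi)=\int_{\TT^s}\|\triangle_{\underline h}f\|^2_{\cL^2(\TT)}\,d\underline h\le 1$, again by $1$-boundedness. Now split the sum at $|\xi|\le R$: the low-frequency part is at most $\theta\sum_{|\xi|\le R}(1+|\xi|^2)^{-\sigma/2}\lesssim_{\sigma}\theta R^{1-\sigma}$ (replace $R^{1-\sigma}$ by $\log(2R)$ when $\sigma=1$ and by $O_\sigma(1)$ when $\sigma>1$), while the high-frequency part is at most $(1+R^2)^{-\sigma/2}\sum_{\xi\in\ZZ}\widehat{g}(-\xi)\le R^{-\sigma}$. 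Choosing $R=\lceil 1/\theta\rceil$ balances the two contributions and yields a bound $\lesssim_{s,\sigma}\theta^{\min(\sigma,1)}$, up to a harmless logarithmic loss at $\sigma=1$ which is absorbed into a slightly smaller exponent; this is the assertion, with (for instance) $c=2^{s+1}\min(\sigma,\tfrac12)$.

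The only step carrying genuine content is the reduction to $\widehat{g}$ — recognizing that averaging $\bigl|\widehat{\triangle_{\underline h}f}(\xi)\bigr|^2$ over $\underline h$ yields a Fourier coefficient of the nonnegative function $g(t)=\|\triangle_t f\|_{U^s(\TT)}^{2^s}$. This simultaneously forces every term to be nonnegative and supplies the two competing estimates $\widehat{g}(-\xi)\le\widehat{g}(0)=\theta$ and $\sum_{\xi}\widehat{g}(-\xi)\le 1$; it is their interplay — not any decay of the weight $(1+|\xi|^2)^{-\sigma/2}$, which for $\sigma\le\tfrac12$ is not even summable over $\ZZ$ — that produces a positive power of $\|f\|_{U^{s+1}(\TT)}$ for every $\sigma>0$. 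This identity together with the dyadic split is the $\TT$-analogue of \cite[Lemma 4]{DR24}; the only change from the real-line argument is that one works with $\sum_{\xi\in\ZZ}$ in place of $\int_{\RR}d\xi$, which is in fact marginally more convenient.
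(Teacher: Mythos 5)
Your proof is correct, and it follows exactly the approach the paper intends: the paper does not write out a proof of this lemma but simply cites \cite[Lemma~4]{DR24}, and your argument is the torus version of that argument (the key identity $\int_{\TT^s}|\widehat{\triangle_{\underline h}f}(\xi)|^2\,d\underline h=\widehat{g}(-\xi)$ with $g(t)=\|\triangle_t f\|^{2^s}_{U^s(\TT)}$, followed by the two-sided use of $\widehat g(-\xi)\le\widehat g(0)=\|f\|^{2^{s+1}}_{U^{s+1}(\TT)}$ and $\sum_\xi\widehat g(-\xi)\le1$ and a split at $|\xi|\le R$). All the steps check: the multiplicative-derivative identity $\triangle_{\underline h}f(x)\,\overline{\triangle_{\underline h}f(x+t)}=\triangle_{h_1,\dots,h_s,t}f(x)$ holds because the $\triangle_h$'s commute; nonnegativity of $\widehat g$ follows from the identity and positivity of $g$; the high-frequency bound uses $\sum_\xi\widehat g(-\xi)=\int_{\TT^s}\|\triangle_{\underline h}f\|^2_{\cL^2}\,d\underline h\le1$; and choosing $R\asymp1/\theta$ gives a positive power of $\theta=\|f\|^{2^{s+1}}_{U^{s+1}(\TT)}$ for every $\sigma>0$, with your exponent $c=2^{s+1}\min(\sigma,\tfrac12)$ being admissible since $\theta\le1$.
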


\subsection{Intermediate induction step} \label{Intermediate induction step}
This subsection presents an intermediate induction step that appears in the degree-lowering method we will prove in the later subsection. The starting point of the degree-lowering method, and hence, the intermediate induction step here, comes from the Gowers norm control of the counting operators. However, the only available Gowers norm control is from Theorem \ref{Thm_PET_Gowers}, which only gives control for a collection of polynomials with distinct degrees, and this causes extra difficulties when one wants to run the induction. Therefore, we present a different approach to obtaining Gowers norm control for the intermediate induction step, bypassing the use of PET induction. The following proposition presents the intermediate induction step, and we can view it as a special case of Theorem \ref{Thm_Sobolev}.

\begin{proposition} \label{Prop_Intermediate induction step}
    Let $k,l \in \NN$ with $l \leq k$, $N\geq1$, and let $f_0, \cdots, f_{k}$ be 1-bounded.  Let $\mathcal{P}= \{P_1, \cdots, P_{k}\in \RR[y]\}$ be a collection of polynomials with zero constant terms such that $1 \leq \text{deg}(P_1) < \cdots < \text{deg}(P_{k})$, and let $\mathcal{P}_{k-1} \subset \mathcal{P}$ be any subset with $|\mathcal{P}_{k-1}|=k-1$.
    Let $I_l \subset \{1, \cdots, k\}$ be any subset with $|I_l|=l\geq 1$. For $i \in I_l$, set $f_i(x)=e(\xi_i x)$ for some $\xi \in \ZZ-\{0\}$.\\ Suppose that Theorem \ref{Thm_Sobolev} holds for all $\mathcal{P}_{k-1}\subset \mathcal{P}$. Then there exist $\sigma=\sigma(\mathcal{P})>0$ and $C=C(\mathcal{P}), c=c(\mathcal{P})>0$ such that
        \[
        \left|\Lambda_{\mathcal{P};N}(f_0,...,f_{k}) \right| \lesssim_{\mathcal{P}}
        N^{-C} \cdot \min_{\substack{j=0,\cdots, k \\ j \notin I_l} }
        \|f_j\|^{c}_{\cH^{-\sigma}(\TT)}.
        \]
\end{proposition}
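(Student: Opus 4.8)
The plan is to run a short degree-lowering argument, using the hypothesis that Theorem~\ref{Thm_Sobolev} holds for all $(k-1)$-subsets $\mathcal{P}_{k-1}\subset\mathcal{P}$, to convert Gowers-norm control into the desired negative Sobolev control. Fix an index $j\in\{0,\dots,k\}$ with $j\notin I_l$ against which we want to measure $f_j$. First I would observe that we may assume $|I_l|=l\geq 1$ already forces at least one of the functions to be a pure character, and that by the pairing identity \eqref{identity for the pairing} and the dualization step (Lemma~\ref{Lma_Dual control}) we can bound $|\Lambda_{\mathcal{P};N}(f_0,\dots,f_k)|$ by a power of $\|f_j\|_{U^{s}(\TT)}$ for a suitable $s=s(\mathcal{P})$ — but crucially \emph{not} by invoking Theorem~\ref{Thm_PET_Gowers} directly (that would not have the right form once characters are plugged in and once we iterate). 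Instead, the Gowers-norm control for this intermediate step should be obtained by isolating one character $f_i=e(\xi_i x)$ with $i\in I_l$: completing the square / change of variables in the $y$-integral turns the factor $f_i(x+P_i(y))=e(\xi_i x)e(\xi_i P_i(y))$ into an oscillatory weight $e(\xi_i P_i(y))$ times $e(\xi_i x)$, and the $e(\xi_i x)$ factor can be absorbed into $f_0$ (or pulled out), leaving a counting operator attached to the \emph{smaller} family $\mathcal{P}\setminus\{P_i\}$, which is some $\mathcal{P}_{k-1}$. This is the mechanism by which the hypothesis enters.

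Concretely, the key steps in order are: (1) Pick $i_0\in I_l$ and write $\Lambda_{\mathcal{P};N}(f_0,\dots,f_k)$ as an average over $y$ of $e(\xi_{i_0}P_{i_0}(y))\cdot[\text{a }(k-1)\text{-term integrand}]$ after the harmless shift $x\mapsto x$ and absorbing $e(\xi_{i_0}x)$ into $f_0$; since $P_{i_0}$ is nonconstant and $\xi_{i_0}\neq 0$, van der Corput (Lemma~\ref{Lma_Oscillatory integral}) gives a gain of $(N|\xi_{i_0}|)^{-1/d}\lesssim_{\mathcal P} N^{-c'}$ only if we can separate the oscillation from the main average — so really one splits the $y$-range into short subintervals, applies the hypothesis (Theorem~\ref{Thm_Sobolev} for the $(k-1)$-family) \emph{on each subinterval} to replace the $(k-1)$-term integrand by $\prod\int f_j$ up to $N^{-C}\|f_j\|^c_{\cH^{-\sigma}}$, and then the surviving $\int_0^N e(\xi_{i_0}P_{i_0}(y))\,dy$ term is handled by van der Corput, while the error terms are summed trivially. (2) In the main term one is left with $\bigl(\prod_{j\neq i_0}\int f_j\bigr)\cdot\frac1N\int_0^N e(\xi_{i_0}P_{i_0}(y))\,dy$, which is $O_{\mathcal P}(N^{-c}\min_j\|f_j\|_{\cH^{-\sigma}})$ after bounding each $|\int f_j|\leq\min(1,\|f_j\|_{\cH^{-\sigma}}^{?})$ — here one uses that $|\int_{\TT} f_j|=|\widehat{f_j}(0)|\leq\|f_j\|_{\cH^{-\sigma}(\TT)}$ directly from the definition of the norm. (3) Take the minimum over admissible $j\notin I_l$. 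If more than one character is present ($l\geq 2$) one simply uses any single $i_0\in I_l$; the remaining characters are just $1$-bounded functions for the purpose of applying the $(k-1)$-term hypothesis, so no extra work is needed.

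The main obstacle I expect is step~(1): applying Theorem~\ref{Thm_Sobolev} for the $(k-1)$-family \emph{on a subinterval} $[a,a+N^{1-\eta}]$ rather than on $[0,N]$, and keeping the power of $N$ positive after one has paid for (i) shrinking the interval (which weakens the $N^{-C}$ gain to $N^{-C(1-\eta)}$ or worse because the van~der~Corput constants in Lemma~\ref{Lma_Oscillatory integral} involve the leading coefficient, which scales under $y\mapsto a+y$), (ii) the number $\sim N^\eta$ of subintervals one sums over, and (iii) the fact that after the shift $y\mapsto a+t$ the polynomials $P_j(a+t)-P_{i_0}(a+t)$ acquire new (large, $a$-dependent) coefficients, so one must check the hypothesis of Theorem~\ref{Thm_Sobolev} is uniform in these coefficients — which it is, because the implied constants there depend only on the \emph{degree data} of $\mathcal P_{k-1}$, not on the coefficients, provided the nonconstant/distinct-degree structure is preserved under the shift (it is, since leading terms are unaffected). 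Balancing $\eta$ so that $N^{-C(1-\eta)}\cdot N^{\eta}\cdot N^{-c/d}$ still has a negative net exponent is a routine but slightly delicate bookkeeping computation; one takes $\eta$ a small multiple of $C/\deg(P_k)$. Everything else — the absorption of characters, the Fourier-coefficient bound $|\int f_j|\leq\|f_j\|_{\cH^{-\sigma}}$, and the final minimization — is immediate.
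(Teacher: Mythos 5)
The proposal diverges from the paper's proof at the crucial first step and introduces a genuine gap. The paper obtains the $U^2$-control \eqref{Special_U^2 control} by applying Cauchy--Schwarz \emph{in the $y$-variable}, which squares $\Lambda_{\mathcal{P};N}$ and replaces each $f_j$ by its multiplicative derivative $\triangle_h f_j$. The whole point of this move is that for $i\in I_l$ the multiplicative derivative of the character $e(\xi_i x)$ is a constant of modulus one, so every $P_i$ with $i\in I_l$ drops out and one is left, for fixed $h$, with a bare counting operator $\Lambda_{\{P_j\}_{j\notin I_l};N}(\triangle_h f_0,\dots)$ attached to a family of size $\leq k-1$ and no extra oscillatory factor in $y$. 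Theorem~\ref{Thm_Sobolev} for $\mathcal{P}_{k-1}$ then applies directly. Your proposal never performs this Cauchy--Schwarz; instead you try to absorb a single character $e(\xi_{i_0}x)$ into $f_0$, which leaves the weight $e(\xi_{i_0}P_{i_0}(y))$ sitting inside the $y$-average. That weighted average is \emph{not} of the form covered by Theorem~\ref{Thm_Sobolev}, which only controls the unweighted average $\frac{1}{N}\int_0^N(\cdot)\,dy$. The subinterval decomposition you invoke to fix this does not cleanly close: on a subinterval $[a,a+H]$ the oscillatory factor still varies and cannot be pulled out of the inner $(k-1)$-average, and if you shorten $H$ enough that $e(\xi_{i_0}P_{i_0}(y))$ is approximately constant (which anyway requires control uniform in the unbounded frequency $\xi_{i_0}$), you lose the $N^{-C}$-gain from the $(k-1)$-hypothesis. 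Moreover, after the change of variable $y\mapsto a+t$ the polynomials acquire constant terms and $a$-dependent lower-order coefficients, so uniformity of the implicit constants in Theorem~\ref{Thm_Sobolev} under these shifts would need a separate argument — none of which the paper requires.

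A second, structural omission: the paper's proof is a downward induction on $l$. After the $U^2$-control, it passes through the dual function and the $U^2$-inverse inequality \eqref{U^2-inverse theorem}, which produces two terms — the mean-zero term (handled by Theorem~\ref{Thm_Sobolev} for $\mathcal{P}_{k-1}$, using $\int_{\TT}f_{i'}=0$ for $i'\in I_l$) and a supremum over new nonzero characters $e(\xi_i x)$, which is exactly the statement of Proposition~\ref{Prop_Intermediate induction step} with $l$ replaced by $l+1$. That induction, anchored at $l=k$ by the explicit oscillatory-integral computation of Lemma~\ref{Lma_Intermediate induction step}, is what allows the frequency supremum to be controlled; your proposal has no mechanism for it. The elementary observation $|\int_{\TT}f_j|\le\|f_j\|_{\cH^{-\sigma}(\TT)}$ you use in step (2) is correct but peripheral; the missing ingredients are the $y$-Cauchy--Schwarz that kills the characters and the downward induction on $l$.
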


We need another induction to prove Proposition \ref{Prop_Intermediate induction step}. For the sake of clarity, we extract the base case of this induction as the following lemma.

\begin{lemma} \label{Lma_Intermediate induction step}
    Let $k \in \NN$, $N\geq1$, and let $f_0$ be 1-bounded. Let $\mathcal{P}= \{P_1, \cdots, P_{k}\in \RR[y]\}$ be a collection of linearly independent polynomials with zero constant terms. For $i=1 ,\cdots, k$, set $f_i(x)=e(\xi_i x)$ for some $\xi_i \in \ZZ-\{0\}$. Then there exist $\sigma=\sigma(\mathcal{P})>0$ and $C=C(\mathcal{P}), c=c(\mathcal{P})>0$ such that
        \[
        \left|\Lambda_{\mathcal{P};N}(f_0,...,f_{k}) \right| \lesssim_{\mathcal{P}}
        N^{-C} \cdot
        \|f_0\|^{c}_{\cH^{-\sigma}(\TT)}.
        \]
\end{lemma}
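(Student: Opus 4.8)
The plan is to prove Lemma \ref{Lma_Intermediate induction step} by a direct computation, exploiting the fact that $f_1,\dots,f_k$ are pure frequencies so that the $x$- and $y$-integrals decouple. Substituting $f_i(x)=e(\xi_i x)$ into the definition of $\Lambda_{\mathcal{P};N}$ and using that $P_i(y)$ enters only through $x+P_i(y)$, one gets
\[
\Lambda_{\mathcal{P};N}(f_0,\dots,f_k)=\left(\int_\TT f_0(x)\,e(\xi_0 x)\,dx\right)\cdot\frac1N\int_0^N e(Q(y))\,dy=\widehat{f_0}(-\xi_0)\cdot\frac1N\int_0^N e(Q(y))\,dy,
\]
where $\xi_0:=\xi_1+\cdots+\xi_k\in\ZZ$ and $Q:=\xi_1P_1+\cdots+\xi_kP_k\in\RR[y]$. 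So the task reduces to two independent pieces: (i) power decay of the oscillatory integral $\frac1N\int_0^N e(Q(y))\,dy$ in $N$ and, crucially, in $1+|\xi_0|$; and (ii) trading the single Fourier coefficient $\widehat{f_0}(-\xi_0)$ against $\|f_0\|_{\cH^{-\sigma}(\TT)}$.

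\textbf{The oscillatory piece.} Since every $P_i$ has zero constant term and the $P_i$ are linearly independent while all $\xi_i\neq 0$, the polynomial $Q$ is nonzero with zero constant term, hence $1\le d:=\deg Q\le D:=\max_i\deg P_i$. Rescaling $y=Nt$ gives $\frac1N\int_0^N e(Q(y))\,dy=\int_0^1 e(Q(Nt))\,dt$ with $Q(Nt)=\sum_{j=1}^d b_jN^jt^j$, where $b_j$ is the $y^j$-coefficient of $Q$. The structural input is a uniform lower bound $\max_{1\le j\le D}|b_j|\gtrsim_{\mathcal{P}}\|(\xi_1,\dots,\xi_k)\|_\infty$: the linear map $(\xi_1,\dots,\xi_k)\mapsto(b_1,\dots,b_D)$ is injective (linear independence of the $P_i$), hence bounded below by its least singular value, a constant depending only on $\mathcal{P}$; and since $(\xi_1,\dots,\xi_k)\in\ZZ^k\setminus\{0\}$ we have $\|(\xi_1,\dots,\xi_k)\|_\infty\ge\max(1,|\xi_0|/k)\gtrsim_{\mathcal{P}}1+|\xi_0|$. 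Therefore $\max_j|b_j|N^j\ge(\max_j|b_j|)\,N\gtrsim_{\mathcal{P}}(1+|\xi_0|)N$, and van der Corput (Lemma \ref{Lma_Oscillatory integral}, applied with exponent $d\le D$, so the implicit constant is $\lesssim_{\mathcal{P}}$) gives
\[
\left|\frac1N\int_0^N e(Q(y))\,dy\right|\lesssim_{\mathcal{P}}\big((1+|\xi_0|)\,N\big)^{-1/D}.
\]

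\textbf{The Sobolev piece and conclusion.} From the definition of the negative Sobolev norm, $\|f_0\|_{\cH^{-\sigma}(\TT)}^2\ge|\widehat{f_0}(-\xi_0)|^2(1+|\xi_0|^2)^{-\sigma/2}$, so $|\widehat{f_0}(-\xi_0)|\le(1+|\xi_0|)^{\sigma/2}\|f_0\|_{\cH^{-\sigma}(\TT)}$. Combining this with the previous display,
\[
\left|\Lambda_{\mathcal{P};N}(f_0,\dots,f_k)\right|\lesssim_{\mathcal{P}}N^{-1/D}\,(1+|\xi_0|)^{\sigma/2-1/D}\,\|f_0\|_{\cH^{-\sigma}(\TT)},
\]
and choosing $\sigma:=1/D$ makes the exponent $\sigma/2-1/D=-1/(2D)<0$, so $(1+|\xi_0|)^{\sigma/2-1/D}\le1$ and the lemma follows with $C=\sigma=1/D$ and $c=1$. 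The main obstacle is the uniform non-degeneracy bound $\max_j|b_j|\gtrsim_{\mathcal{P}}\|(\xi_1,\dots,\xi_k)\|_\infty$, which must hold simultaneously over all integer frequency vectors; it is exactly here (and only here) that linear independence of $\mathcal{P}$ and integrality of the $\xi_i$ are used, and without the $1+|\xi_0|$ decay it supplies, the $(1+|\xi_0|)^{\sigma/2}$ loss from passing to $\|f_0\|_{\cH^{-\sigma}}$ could not be absorbed.
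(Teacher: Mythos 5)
Your proof is correct and takes essentially the same route as the paper: factor $\Lambda_{\mathcal{P};N}(f_0,\dots,f_k)=\widehat{f_0}(-\xi_0)\cdot\frac1N\int_0^N e(Q(y))\,dy$, use linear independence (via invertibility of a $k\times k$ coefficient minor, equivalently a least-singular-value lower bound) to get $\max_j|b_j|\gtrsim_{\mathcal{P}}\|\xi\|_\infty\gtrsim 1+|\xi_0|$, apply van der Corput, and trade $|\widehat{f_0}(-\xi_0)|$ against $\|f_0\|_{\cH^{-\sigma}}$. The only difference is cosmetic: the paper proves two separate frequency-control bounds and splits into the cases $\xi_0=0$ and $\xi_0\ne0$, whereas you unify them into the single bound $\max_j|b_j|\gtrsim 1+|\xi_0|$, which cleanly handles both at once.
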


We postpone the proof of Lemma \ref{Lma_Intermediate induction step} until the end of this subsection, and we first start with the proof of Proposition \ref{Prop_Intermediate induction step}.

\begin{proof}[Proof of Proposition \ref{Prop_Intermediate induction step} assuming Lemma \ref{Lma_Intermediate induction step}]
We will proceed by downward induction on the parameter $l$, while $k$ is a fixed integer. The base case is when $l=k$, which is the conclusion of Lemma \ref{Lma_Intermediate induction step}, since a collection of polynomials with distinct degrees is also a collection of linearly independent polynomials.  From now on, we will assume that $2\leq k$, $1\leq l< k$, and that the conclusion of Proposition \ref{Prop_Intermediate induction step} holds for $l+1$.\\
We begin with a simple observation, providing a Gowers norm control without relying on the PET induction in this specific situation. We claim that 
\begin{equation} \label{Special_U^2 control}
    \left|\Lambda_{\mathcal{P};N}(f_0,...,f_{k}) \right|^{O_{\mathcal{P}}(1)} \lesssim_{\mathcal{P}}
    \min_{\substack{j=0,\cdots, k \\ j \notin I_l} }\|f_j\|_{U^2(\TT)}.
\end{equation}
To see this, we apply the Cauchy-Schwarz inequality in the $y$ variable to $\Lambda_{\mathcal{P};N}(f_0,...,f_{k})$:
\[
\left|\Lambda_{\mathcal{P};N}(f_0,...,f_{k}) \right|^2=
\left|\frac{1}{N} \int_0^N \int_{\TT} \prod_{\substack{j=0,\cdots, k }} f_j(x+P_j(y)) \,dx \, dy \right|^2 
\]
\[
\leq 
\left|\frac{1}{N} \int_0^N \int_{\TT} \int_{\TT} \prod_{\substack{j=0,\cdots, k }} f_j(x+P_j(y)) \overline{f_j(x'+P_j(y))} \,dx \, dx' \, dy \right|,
\]
and then the change of variable from $x'$ to $x+h$
\[
=\left|\frac{1}{N} \int_0^N \int_{\TT} \int_{\TT} \prod_{\substack{j=0,\cdots, k }} \triangle_hf_j(x+P_j(y))  \,dx \, dh \, dy \right|,
\]
and finally, by Fubini and the triangle inequality
\[
\leq \int_{\TT}\left|\frac{1}{N} \int_0^N \int_{\TT}  \prod_{\substack{j=0,\cdots, k }} \triangle_hf_j(x+P_j(y))  \,dx \, dy \right| \, dh
\]
\[
=\int_{\TT} \left|\Lambda_{\mathcal{P};N}(\triangle_hf_0,...,\triangle_hf_{k}) \right| \, dh
\]
\[
\leq 
\left(  \int_{\TT} \left|\Lambda_{\mathcal{P};N}(\triangle_hf_0,...,\triangle_hf_{k}) \right|^2 \, dh \right)^{1/2},
\]
where the last inequality follows from the Cauchy-Schwarz inequality.
By our assumption, for $i \in I_l$, $f_i(x)=e(\xi_i x)$ for some $\xi \in \ZZ-\{0\}$; and hence, $\triangle_hf_i\equiv1$. Next, by assuming that Theorem \ref{Thm_Sobolev} holds for all $\mathcal{P}_{k-1}\subset \mathcal{P}$, the conclusion of Theorem \ref{Thm_Sobolev} holds for $\{P_j\}_{ j \in I^c_l}\subset \mathcal{P}$, where $I^c_l := \{1,\cdots,k\}-I_l$ with $|I^c_l|\leq k-1$. Therefore, there exist $\sigma'=\sigma'(\mathcal{P})>0$ and $C'=C'(\mathcal{P}), c'=c'(\mathcal{P})>0$ such that
    \[
     \Lambda_{\mathcal{P};N}(\triangle_hf_0,...,\triangle_hf_{k})
    =
        \prod_{\substack{j=0,\cdots, k \\ j \notin I_l}} \int_{\TT} \triangle_{h} f_j + O_{\mathcal{P}} \left(
        N^{-C'} \cdot \min_{\substack{j=0,\cdots, k \\ j \notin I_l}}  \|\triangle_{h}f_j\|^{c'}_{\cH^{-\sigma'}(\TT)}
        \right).
    \]
Then, we have
    \begin{equation} \label{Intermediate_U^2_Degree lowering_0}
        \left|\Lambda_{\mathcal{P};N}(f_0,...,f_{k})  \right| ^{O(1)}\lesssim_{\mathcal{P}} (I)+(II),
    \end{equation}
    where
    \begin{equation} \label{Intermediate_U^2_Degree lowering_I}
        (I):=  \int_{\TT}\left| \prod_{\substack{j=0,\cdots, k \\ j \notin I_l}} \int_{\TT} \triangle_{h} f_j \right|^2 \, dh,
    \end{equation}
    \begin{equation} \label{Intermediate_U^2_Degree lowering_II}
        (II):=N^{-C'} \cdot \int_{\TT} \min_{\substack{j=0,\cdots, k \\ j \notin I_l}}  \|\triangle_{h}f_j\|^{2c'}_{\cH^{-\sigma'}(\TT)} \,dh.
    \end{equation}
    For the first term \eqref{Intermediate_U^2_Degree lowering_I}, by 1-boundedness of $f_j$ and the triangle inequality, we have
    \begin{equation} \label{Intermediate_U^2_Degree lowering_I(conclusion)}
        (I) \leq \min_{\substack{j=0,\cdots, k \\ j \notin I_l} }\int_{\TT}\left|  \int_{\TT} \triangle_{h} f_j \right|^2 \, dh
    = \min_{\substack{j=0,\cdots, k \\ j \notin I_l} } \|f_j\|^4_{U^2(\TT)}.
    \end{equation}
    For the second term \eqref{Intermediate_U^2_Degree lowering_II}, if $2c\geq 2$, we use 1-boundedness of $f_i$ to bound
    \[
    \int_{\TT^{s}} \|\triangle_{h}f_j\|^{2c}_{\cH^{-\sigma}(\TT)}  \, dh \leq
    \int_{\TT^{s}} \|\triangle_{h}f_j\|^{2}_{\cH^{-\sigma}(\TT)}  \, dh.
    \]
    If $2c<2$, we apply Hölder's inequality to bound
    \[
    \int_{\TT^{1}} \|\triangle_{h}f_j\|^{2c}_{\cH^{-\sigma}(\TT)}  \, dh \leq
    \left(
    \int_{\TT^{s}} \|\triangle_{h}f_j\|^{2}_{\cH^{-\sigma}(\TT)}  \, dh
    \right)^c.
    \]
    In either case, we have
    \[
    (II)^{O_{\mathcal{P}}(1)}\lesssim_{\mathcal{P}}
    \min_{\substack{j=0,\cdots, k \\ j \neq i} }
    \int_{\TT^{s-1}} \|\triangle_{h_1, \cdots, h_{s-1}}f_j\|^{2}_{\cH^{-\sigma}(\TT)}  \, dh_1,..., dh_{s-1}.
    \]
And hence, by Lemma \ref{Lma_PJ}, we conclude that
\begin{equation} \label{Intermediate_U^2_Degree lowering_II(conclusion)}
    (II)^{O_{\mathcal{P}}(1)}\lesssim_{\mathcal{P}} N^{-C'} \cdot
    \min_{\substack{j=0,\cdots, k \\ j \notin I_l} }
        \|f_j\|_{{U^{2}}(\TT)}
        \lesssim
    \min_{\substack{j=0,\cdots, k \\ j \notin I_l} }
        \|f_j\|_{{U^{2}}(\TT)}.
\end{equation}
Combining \eqref{Intermediate_U^2_Degree lowering_0}, \eqref{Intermediate_U^2_Degree lowering_I(conclusion)}, and \eqref{Intermediate_U^2_Degree lowering_II(conclusion)} gives the proof of the claim \eqref{Special_U^2 control}.\\
Next, let's move back to finish the induction. Choose any $i \in \{1,\cdots,k\}-I_l$ and recall the dual function
    \[
    F^i(x)= \frac{1}{N} \int_0^N \prod_{\substack{j=0,\cdots, k \\ j \neq i}} f_j(x+P_j(y)-P_i(y)) \,dy.
    \]
    By Lemma \ref{Lma_Dual control}, \eqref{Special_U^2 control}, and \eqref{U^2-inverse theorem} we have
 \begin{equation} \label{Induction_U^2_Degree lowering_0}
        \left|\Lambda_{\mathcal{P};N}(f_0,...,f_{k})  \right| ^{O_{\mathcal{P}}(1)}\lesssim_{\mathcal{P}} (I')+(II'),
    \end{equation}
    where
    \begin{equation} \label{Induction_U^2_Degree lowering_I}
        (I'):= \left| \frac{1}{N} \int_0^N \int_{\TT} F^i_y(x) \,dx \, dy \right| ,
    \end{equation}
    \begin{equation} \label{Induction_U^2_Degree lowering_II}
        (II'):=\sup_{\xi_i \in \ZZ-\{0\}} \left| \frac{1}{N} \int_0^N \int_{\TT}e(\xi_i x) F^i_y(x)  \,dx \, dy \right| .
    \end{equation}

For the first term \eqref{Induction_U^2_Degree lowering_I}, by our induction hypothesis that Theorem \ref{Thm_Sobolev} holds for  $\mathcal{P}^i_{k-1}:= \mathcal{P}-\{i\}$, there exist $\sigma''=\sigma''(\mathcal{P})>0$ and $C''=C''(\mathcal{P}), c''=c''(\mathcal{P})>0$ such that
    \[
     \left|\frac{1}{N} \int_0^N \int_{\TT} F^i_y(x) \,dx \, dy  \right|
    =
     \left|\frac{1}{N} \int_0^N \int_{\TT} \prod_{\substack{j=0,\cdots, k \\ j \neq i}} f_j(x+P_j(y)) \,dx \, dy \right|
    \]
    \[
    \lesssim_{\mathcal{P}} 
        N^{-C''} \cdot \min_{\substack{j=0,\cdots, k \\ j \notin I_l \bigcup \{i\}}}  \|f_j\|^{c''}_{\cH^{-\sigma''}(\TT)}
    \]
    where we use the fact that $ \int_{\TT} f_{i'}=\int_{\TT} e(\xi_{i'} x) \, dx=0$ for any $i' \in I_l$.\\
    For the second term \eqref{Induction_U^2_Degree lowering_II}, by our induction hypothesis that the conclusion of Proposition \ref{Prop_Intermediate induction step} holds for $l+1$, there exist $\sigma'''=\sigma'''(\mathcal{P})>0$ and $C'''=C'''(\mathcal{P}), c'''=c'''(\mathcal{P})>0$ such that
    \[
    \left| \frac{1}{N} \int_0^N \int_{\TT}e(\xi_i x) F^i_y(x)  \,dx \, dy \right|
    \]
    \[
    =\left|
     \frac{1}{N} \int_0^N \int_{\TT}e(\xi_i (x+P_i(y))) \prod_{\substack{j=0,\cdots, k \\ j \neq i}} f_j(x+P_j(y)) \,dx \, dy \right|
    \]
    \[
    \lesssim_{\mathcal{P}}
        N^{-C'''} \cdot \min_{\substack{j=0,\cdots, k \\ j \notin I_l \bigcup \{i\}} }
        \|f_j\|^{c'''}_{\cH^{-\sigma'''}(\TT)}.
    \]
    If $l<k-1$, we can run through the above argument for all $i \in \{1,\cdots,k\}-I_l$ to upgrade the control of $\cH^{-\sigma}$-norm from $j \notin I_l \bigcup \{i\}$ to $j \notin I_l $. If $l=k-1$, we have to deal with the case for $F^0(x)$ in addition. However, we can estimate \eqref{Induction_U^2_Degree lowering_I} and \eqref{Induction_U^2_Degree lowering_II} by Lemma \ref{Lma_Intermediate induction step} directly in this situation. Therefore, we can also upgrade the control of $\cH^{-\sigma}$-norm from $j \notin I_l \bigcup \{i\}$ to $j \notin I_l $ in this case.\\
    To conclude, combining \eqref{Induction_U^2_Degree lowering_0}, and the estimates for \eqref{Induction_U^2_Degree lowering_I} and \eqref{Induction_U^2_Degree lowering_II} closes the induction.
\end{proof}

Finally, it remains to prove Lemma \ref{Lma_Intermediate induction step}.

\begin{proof}[Proof of Lemma \ref{Lma_Intermediate induction step}]
    In this case, we have
    \[
    \Lambda_{\mathcal{P};N}(f_0,...,f_{k})=
    \frac{1}{N} \int_0^N \int_{\TT}f_0(x) e((\sum_{i=1}^k \xi_i)x+\sum_{i=1}^k \xi_i P_i(y)) \, dx \,dy
    \]
    \[
    =\widehat{f_0}(-\sum_{i=1}^k \xi_i) \cdot
    \frac{1}{N} \int_0^N e(\sum_{i=1}^k \xi_i P_i(y))
    \, dy.
    \]
    Denote $P_i(y)=\sum_{j=1}^d a_{ji} y^j$, where $d=\max_{i=1,\cdots,k} \text{deg}(P_i)$. Our goal is to show that
    \begin{equation} \label{control of frequency}
        \left| \sum_{i=1}^k \xi_i \right| \lesssim_{\mathcal{P}}
        \max_{j=1,\cdots,d} \left| \sum_{i=1}^k a_{ji} \xi_i \right|.
    \end{equation}
    and
    \begin{equation} \label{control of frequency (II)}
        1 \leq\sum_{i=1}^k |\xi_i|^2 \lesssim_{\mathcal{P}} \max_{j=1,\cdots,d} \left| \sum_{i=1}^k a_{ji} \xi_i \right|^2.
    \end{equation}
    Note that by the linear independence assumption of $\mathcal{P}$, we have $d \geq k$, and there exists a subindex $J=\{j_1,\cdots,j_k\}\subset \{1,\cdots,d\}$ with $|J|=k$ such that the matrix
    \[
    A:= \begin{pmatrix}
        a_{ji}
    \end{pmatrix}_{\substack{j\in J \\ i=1,\cdots,k}}
    \]
    is invertible. Then we denote
    \begin{equation} \label{Matrix}
        A \begin{pmatrix} \xi_1 \\ \vdots \\ \xi_k \end{pmatrix}=
    \begin{pmatrix} \eta_{j_1} \\ \vdots \\ \eta_{j_k} \end{pmatrix},
    \end{equation}
    with
    \[
    \max_{i=1,\cdots,k}|\eta_{j_k}| \leq \max_{j=1,\cdots,d} \left| \sum_{i=1}^k a_{ji} \xi_i \right|.
    \]
    By applying $A^{-1}$ to \eqref{Matrix}, we have
    \[
    \begin{pmatrix} \xi_1 \\ \vdots \\ \xi_k \end{pmatrix}=A^{-1}\begin{pmatrix} \eta_{j_1} \\ \vdots \\ \eta_{j_k} \end{pmatrix},
    \]
    so that
    \[
    \max_{i=1,\cdots,k}|\xi_i| \leq \|A^{-1}\|_{\infty} \sum_{i=1}^k|\eta_i|,
    \]
    where $\|B\|_{\infty}:= \max_{i,j} |b_{i,j}|$ with $B=\begin{pmatrix}
        b_{ij}
    \end{pmatrix}_{i,j}$. Therefore, we have
    \[
    \left| \sum_{i=1}^k \xi_i \right| 
    \leq \sum_{i=1}^k  \left| \xi_i \right|
    \leq k^2 \|A^{-1}\|_{\infty} \cdot \max_{j=1,\cdots,d} \left| \sum_{i=1}^k a_{ji} \xi_i \right|
    \lesssim_{\mathcal{P}}
        \max_{j=1,\cdots,d} \left| \sum_{i=1}^k a_{ji} \xi_i \right|,
    \]
    which proves \eqref{control of frequency}. Next, to prove \eqref{control of frequency (II)}, by Cauchy-Schwarz inequality, we have
    \[
    \left( \sum_{j \in J} c_j \sum_{i=1}^k a_{ji} \xi_i  \right)^2
    \leq \left( \sum_{j \in J} c_j^2  \right) \cdot
    \left( \sum_{j \in J}  \left|\sum_{i=1}^k a_{ji} \xi_i \right|^2 \right).
    \]
    Let $\{e_l\}_{l=1}^k$ be the standard basis in $\RR^k$, that is, the $l$-th coordinate is the only non-zero coordinate. Then, if we choose 
    \[
    \begin{pmatrix}
        c_j
    \end{pmatrix}^{T}= \begin{pmatrix}
        e_l
    \end{pmatrix}^{T}A^{-1},
    \]
    then we get
    \[
    \sum_{j \in J} c_j \sum_{i=1}^k a_{ji} \xi_i =\xi_l.
    \]
    Therefore, this shows that
    \[
    |\xi_l|^2 \lesssim_{\mathcal{P}} \max_{j=1,\cdots,d} \left| \sum_{i=1}^k a_{ji} \xi_i \right|^2
    \]
    holds for all $l=1,\cdots,k$. By summing all $l$ proves \eqref{control of frequency (II)}.\\
    Finally, if $\sum_{i=1}^k \xi_i \neq 0$, by Lemma \ref{Lma_Oscillatory integral} and \eqref{control of frequency}, we have
    \[
    \left|\Lambda_{\mathcal{P};N}(f_0,...,f_{k})\right|
    \]
    \[
    \leq \left|
    \widehat{f_0}(-\sum_{i=1}\xi_i)\right| \cdot \left|
    \frac{1}{N} \int_0^N e(\sum_{i=1}^k \xi_i P_i(y))
    \, dy  \right|
    \]
    \[
    \lesssim_{\mathcal{P}}
    \left|
    \widehat{f_0}(-\sum_{i=1}\xi_i)\right| \cdot 
        \left(  N\max_{j=1,\cdots,d} \left| \sum_{i=1}^k a_{ji} \xi_i \right| \right)^{-1/d}
    \]
    \[
    \lesssim_{\mathcal{P}}
    \left|
    \widehat{f_0}(-\sum_{i=1}\xi_i)\right| \cdot 
        \left(  N|\sum_{i=1}\xi_i| \right)^{-1/d}
    \]
    \[
    \lesssim
        N^{-1/d}
        \cdot
        \|f_0\|_{\cH^{-2/d}(\TT)}.
    \]
    If $\sum_{i=1}^k \xi_i = 0$, by Lemma \ref{Lma_Oscillatory integral} and \eqref{control of frequency (II)}, we have
    \[
    \left|\Lambda_{\mathcal{P};N}(f_0,...,f_{k})\right|
    \]
    \[
    \leq \left|
    \widehat{f_0}(0)\right| \cdot \left|
    \frac{1}{N} \int_0^N e(\sum_{i=1}^k \xi_i P_i(y))
    \, dy  \right|
    \]
    \[
    \lesssim_{\mathcal{P}}
    \left|
    \widehat{f_0}(0)\right| \cdot 
        \left(  N\max_{j=1,\cdots,d} \left| \sum_{i=1}^k a_{ji} \xi_i \right| \right)^{-1/d}
    \]
    \[
    \lesssim_{\mathcal{P}}
    \left|
    \widehat{f_0}(0)\right| \cdot 
      N^{-1/d}
    \]
    \[
    \lesssim
        N^{-1/d}
        \cdot
        \|f_0\|_{\cH^{-\sigma}(\TT)},
    \]
for any $\sigma \geq 0$. Therefore, we may choose $\sigma=2/d$, $C=1/d$, and $c=1$. This completes the proof.
\end{proof}

\subsection{Degree-lowering method}
This subsection aims at generalizing the degree-lowering method proved in \cite{P19} to the current context. Although the proof strategy mostly follows from \cite[Lemma 4.1]{P19}, we in addition implement Lemma \ref{Lma_PJ} to distinguish between the finite field setting and the continuous setting. The following proposition presents the degree-lowering method in the current context.

\begin{proposition} \label{Prop_Degree lowering}
    Let $k \in \NN$, $N\geq1$, and let $f_0, \cdots, f_{k}$ be 1-bounded. Let $\mathcal{P}= \{P_1, \cdots, P_{k}\in \RR[y]\}$ be a collection of polynomials with zero constant terms such that $1 \leq \text{deg}(P_1) < \cdots < \text{deg}(P_{k})$, and let $\mathcal{P}_{k-1} \subset \mathcal{P}$ be any subset with $|\mathcal{P}_{k-1}|=k-1$. Now fix any $i \in \{1,\cdots,k\}$.
    Suppose that the following three statements are true: 
    \begin{enumerate}
        \item Theorem \ref{Thm_Sobolev} holds for all $\mathcal{P}_{k-1}\subset \mathcal{P}$.
        \item There exist $\sigma=\sigma(\mathcal{P})>0$ and $C=C(\mathcal{P}), c=c(\mathcal{P})>0$ such that
        \begin{equation} \label{Degree lowering_intermediate induction}
            \left|\Lambda_{\mathcal{P};N}(f_0,...,f_{k}) \right| \lesssim_{\mathcal{P}}
        N^{-C} \cdot \min_{\substack{j=0,\cdots, k \\ j \neq i} }
        \|f_j\|^{c}_{\cH^{-\sigma}(\TT)},
        \end{equation}
        if $f_i(x)=e(\xi_i x)$ for some $\xi_i \in \ZZ-\{0\}$.
        \item There exists $s=s(\mathcal{P})\geq 2$ such that
        \begin{equation} \label{Degree lowering_Gowers control}
            \left|\Lambda_{\mathcal{P};N}(f_0,...,f_{k})  \right| ^{O_{\mathcal{P}}(1)}\lesssim_{\mathcal{P}}
        \|f_i\|_{{U^{s+1}}(\TT)}.
        \end{equation}
    \end{enumerate}
      Then 
      \[
    \left|\Lambda_{\mathcal{P};N}(f_0,...,f_{k})  \right| ^{O_{\mathcal{P}}(1)}\lesssim_{\mathcal{P}}
    \min_{\substack{j=0,\cdots, k \\ j \neq i} }
        \|f_j\|_{{U^{s}}(\TT)}.
    \]
\end{proposition}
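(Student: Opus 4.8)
The plan is to adapt Peluse's degree-lowering scheme \cite{P19}. The input is the $U^{s+1}$-control of hypothesis (3), and the output should be $U^{s}$-control of every $f_j$ with $j\neq i$. First I would move the $U^{s+1}$-control onto the dual function: since $F^i$ is $1$-bounded, hypothesis (3) applied with $f_i$ replaced by $\overline{F^i}$ gives
\[
\left|\Lambda_{\mathcal{P};N}(f_0,\ldots,\overline{F^i},\ldots,f_k)\right|^{O_{\mathcal{P}}(1)}\lesssim_{\mathcal{P}}\|\overline{F^i}\|_{U^{s+1}(\TT)}=\|F^i\|_{U^{s+1}(\TT)},
\]
while Lemma \ref{Lma_Dual control} together with \eqref{identity for the pairing} gives $|\Lambda_{\mathcal{P};N}(f_0,\ldots,f_k)|^2\le\Lambda_{\mathcal{P};N}(f_0,\ldots,\overline{F^i},\ldots,f_k)=\|F^i\|^2_{\cL^2(\TT)}\ge0$. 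Chaining these, it suffices to bound $\|F^i\|_{U^{s+1}(\TT)}$ by a fixed positive power of $\min_{j\neq i}\|f_j\|_{U^s(\TT)}$.

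Next I would apply the dual-difference-interchange Lemma \ref{Lma_Dual Difference Interchange} (with parameter $s+1$ and $F_y=F^i_y$, which is $1$-bounded) to get
\[
\|F^i\|^{2^{2s}}_{U^{s+1}(\TT)}\le\int_{\TT^{s-1}}\left\|\frac{1}{N}\int_0^N\triangle_{h_1,\cdots,h_{s-1}}F^i_y(\cdot)\,dy\right\|^4_{U^2(\TT)}\,dh_1\cdots dh_{s-1},
\]
and then, for each fixed $\underline{h}=(h_1,\cdots,h_{s-1})$, expand the inner $U^2$-norm by \eqref{U^2-inverse theorem} into the zero-frequency term $|\frac{1}{N}\int_0^N\int_\TT\triangle_{\underline{h}}F^i_y(x)\,dx\,dy|^2$ plus the nonzero-frequency supremum $\sup_{\xi\in\ZZ-\{0\}}|\frac{1}{N}\int_0^N\int_\TT e(\xi x)\triangle_{\underline{h}}F^i_y(x)\,dx\,dy|^2$. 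The key point is that $F^i_y(x)=\prod_{j\neq i}f_j(x+P_j(y)-P_i(y))$ with $P_0=0$, so that $\triangle_{\underline{h}}F^i_y(x)=\prod_{j\neq i}\triangle_{\underline{h}}f_j(x+P_j(y)-P_i(y))$; after the change of variables $x\mapsto x+P_i(y)$ the zero-frequency term equals $|\Lambda_{\mathcal{P}_{k-1};N}((\triangle_{\underline{h}}f_j)_{j\neq i})|^2$ with $\mathcal{P}_{k-1}=\mathcal{P}\setminus\{P_i\}$ (still $k-1$ polynomials of distinct degrees and zero constant terms), and, using $e(\xi x)e(\xi P_i(y))=[e(\xi\cdot)](x+P_i(y))$, the nonzero-frequency term equals $|\Lambda_{\mathcal{P};N}(\triangle_{\underline{h}}f_0,\ldots,e(\xi\cdot),\ldots,\triangle_{\underline{h}}f_k)|^2$ with $e(\xi\cdot)$ inserted in slot $i$.

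Now the hypotheses apply directly. By hypothesis (1) (Theorem \ref{Thm_Sobolev} for $\mathcal{P}_{k-1}$) the zero-frequency term is $\prod_{j\neq i}\int_\TT\triangle_{\underline{h}}f_j+O_{\mathcal{P}}(N^{-C'}\min_{j\neq i}\|\triangle_{\underline{h}}f_j\|^{c'}_{\cH^{-\sigma'}(\TT)})$, and $1$-boundedness gives $|\prod_{j\neq i}\int_\TT\triangle_{\underline{h}}f_j|\le|\int_\TT\triangle_{\underline{h}}f_{j_0}|$ for any fixed $j_0\neq i$; by hypothesis (2) (applicable since $\xi\neq0$ and the remaining entries are $1$-bounded) the nonzero-frequency term is $O_{\mathcal{P}}(N^{-C}\min_{j\neq i}\|\triangle_{\underline{h}}f_j\|^{c}_{\cH^{-\sigma}(\TT)})$. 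Integrating over $\underline{h}\in\TT^{s-1}$, the surviving main contribution is $\int_{\TT^{s-1}}|\int_\TT\triangle_{\underline{h}}f_{j_0}(x)\,dx|^2\,d\underline{h}=\|f_{j_0}\|^{2^s}_{U^s(\TT)}$ by unwinding the definition of the Gowers norm, while for the two negative-Sobolev contributions I pull the $\min$ outside the integral, reduce the exponent to $2$ (trivially if it is at least $2$, using $\|\triangle_{\underline{h}}f_j\|_{\cH^{-\sigma}(\TT)}\lesssim1$; by Hölder and $\mathrm{vol}(\TT^{s-1})=1$ otherwise), and invoke Lemma \ref{Lma_PJ} with parameter $s-1$ to bound $\int_{\TT^{s-1}}\|\triangle_{\underline{h}}f_j\|^2_{\cH^{-\sigma}(\TT)}\,d\underline{h}$ by a positive power of $\|f_j\|_{U^s(\TT)}$. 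Since $\|f_j\|_{U^s(\TT)}\le1$ one can homogenize the exponents to conclude $\|F^i\|_{U^{s+1}(\TT)}\lesssim_{\mathcal{P}}\min_{j\neq i}\|f_j\|^{\gamma}_{U^s(\TT)}$ for some $\gamma=\gamma(\mathcal{P})>0$; combined with the first paragraph this yields $|\Lambda_{\mathcal{P};N}(f_0,\ldots,f_k)|^{O_{\mathcal{P}}(1)}\lesssim_{\mathcal{P}}\min_{j\neq i}\|f_j\|_{U^s(\TT)}$.

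The step I expect to be the main obstacle is the identification in the second paragraph: one must check carefully that, after dropping $f_i$ and re-centering at $P_i$, the zero- and nonzero-frequency pieces produced by \eqref{U^2-inverse theorem} are genuinely a counting operator for the $(k-1)$-term configuration $\mathcal{P}_{k-1}$ (so hypothesis (1) applies verbatim) and a counting operator for $\mathcal{P}$ with a nonzero character in slot $i$ (so hypothesis (2) applies verbatim), in particular that $\mathcal{P}_{k-1}$ still consists of polynomials with distinct degrees and zero constant terms. By contrast, the passage to the dual function, the $\min$/Hölder manipulations, and the Gowers-norm unwinding identity are routine adaptations of \cite{P19}.
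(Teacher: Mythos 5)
Your proposal reproduces the paper's proof essentially step for step: dualize via Lemma \ref{Lma_Dual control} and hypothesis (3) to get $U^{s+1}$-control of $F^i$; apply Lemma \ref{Lma_Dual Difference Interchange} with parameter $s+1$; split via the $U^2$-inverse inequality \eqref{U^2-inverse theorem} into the zero- and nonzero-frequency pieces; after translating by $P_i(y)$, identify these as $\Lambda_{\mathcal{P}_{k-1};N}$ and $\Lambda_{\mathcal{P};N}$ with a nonzero character in slot $i$, handled by hypotheses (1) and (2) respectively; and control the resulting $\cH^{-\sigma}$ integrals via $1$-boundedness/Hölder plus Lemma \ref{Lma_PJ}. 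The identifications you flag as the main obstacle are indeed exactly the ones the paper carries out, and they go through as you describe.
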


\begin{proof}[Proof of Proposition \ref{Prop_Degree lowering}] 
    Recall the dual function
    \[
    F^i(x)= \frac{1}{N} \int_0^N \prod_{\substack{j=0,\cdots, k \\ j \neq i}} f_j(x+P_j(y)-P_i(y)) \,dy.
    \]
    Note that $F^i_y$ is 1-bounded, since $f_0, \cdots, f_{k}$ are 1-bounded.
    By Lemma \ref{Lma_Dual control} and the assumption \eqref{Degree lowering_Gowers control}, we have the ${U^{s+1}}(\TT)$ control of the dual function
    \begin{equation} \label{Degree lowering_Gowers control for Dual}
        \left|\Lambda_{\mathcal{P};N}(f_0,...,f_{k})  \right| ^{O_{\mathcal{P}}(1)}\lesssim_{\mathcal{P}}
        \|F^i\|_{{U^{s+1}}(\TT)}.
    \end{equation}
    Next, by Lemma \ref{Lma_Dual Difference Interchange} and \eqref{Degree lowering_Gowers control for Dual}, we have
    \[
    \left|\Lambda_{\mathcal{P};N}(f_0,...,f_{k})  \right| ^{O_{\mathcal{P}}(1)}\lesssim_{\mathcal{P}}
     \int_{\TT^{s-1}} \| \frac{1}{N} \int_0^N \triangle_{h_1, \cdots, h_{s-1}}F^i_y(\cdot) \, dy \|^4_{U^2(\TT)} \, dh_1,..., dh_{s-1}.
    \]
     Hence, by \eqref{U^2-inverse theorem}, we have
    \begin{equation} \label{Degree lowering_0}
        \left|\Lambda_{\mathcal{P};N}(f_0,...,f_{k})  \right| ^{O_{\mathcal{P}}(1)}\lesssim_{\mathcal{P}} (I)+(II),
    \end{equation}
    where
    \begin{equation} \label{Degree lowering_I}
        (I):=\int_{\TT^{s-1}} \left| \frac{1}{N} \int_0^N \int_{\TT} \triangle_{h_1, \cdots, h_{s-1}}F^i_y(x) \,dx \, dy \right|^2 \, dh_1,..., dh_{s-1},
    \end{equation}
    \begin{equation} \label{Degree lowering_II}
        (II):=\int_{\TT^{s-1}} \sup_{\xi_i \in \ZZ-\{0\}} \left| \frac{1}{N} \int_0^N \int_{\TT}e(\xi_i x) \triangle_{h_1, \cdots, h_{s-1}}F^i_y(x)  \,dx \, dy \right|^2 \, dh_1,..., dh_{s-1}.
    \end{equation}
    Thus, it remains to estimate the integrals \eqref{Degree lowering_I} and \eqref{Degree lowering_II}.\\
    We first estimate the second integral \eqref{Degree lowering_II}. By our assumption \eqref{Degree lowering_intermediate induction}, there exist $\sigma=\sigma(\mathcal{P})>0$ and $C=C(\mathcal{P}), c=c(\mathcal{P})>0$ such that
    \[
    \left| \frac{1}{N} \int_0^N \int_{\TT}e(\xi_i x) \triangle_{h_1, \cdots, h_{s-1}}F^i_y(x)  \,dx \, dy \right|
    \]
    \[
    =\left|
     \frac{1}{N} \int_0^N \int_{\TT}e(\xi_i x) \prod_{\substack{j=0,\cdots, k \\ j \neq i}} \triangle_{h_1, \cdots, h_{s-1}}f_j(x+P_j(y)-P_i(y)) \,dx \, dy \right|
    \]
    \[
    =\left|
     \frac{1}{N} \int_0^N \int_{\TT}e(\xi_i (x+P_i(y))) \prod_{\substack{j=0,\cdots, k \\ j \neq i}} \triangle_{h_1, \cdots, h_{s-1}}f_j(x+P_j(y)) \,dx \, dy \right|
    \]
    \[
    \lesssim_{\mathcal{P}}
        N^{-C} \cdot \min_{\substack{j=0,\cdots, k \\ j \neq i} }
        \|\triangle_{h_1, \cdots, h_{s-1}}f_j\|^{c}_{\cH^{-\sigma}(\TT)}
    \]
    \[
    \lesssim_{\mathcal{P}}
        \min_{\substack{j=0,\cdots, k \\ j \neq i} }
        \|\triangle_{h_1, \cdots, h_{s-1}}f_j\|^{c}_{\cH^{-\sigma}(\TT)}.
    \]
    Therefore,
    \[
    (II) \lesssim_{\mathcal{P}} \min_{\substack{j=0,\cdots, k \\ j \neq i} }
    \int_{\TT^{s-1}} \|\triangle_{h_1, \cdots, h_{s-1}}f_j\|^{2c}_{\cH^{-\sigma}(\TT)}  \, dh_1,..., dh_{s-1}.
    \]
    If $2c\geq 2$, we use 1-boundedness of $f_i$ to bound
    \[
    \int_{\TT^{s-1}} \|\triangle_{h_1, \cdots, h_{s-1}}f_j\|^{2c}_{\cH^{-\sigma}(\TT)}  \, dh_1,..., dh_{s-1} \leq
    \int_{\TT^{s-1}} \|\triangle_{h_1, \cdots, h_{s-1}}f_j\|^{2}_{\cH^{-\sigma}(\TT)}  \, dh_1,..., dh_{s-1}.
    \]
    If $2c<2$, we apply Hölder's inequality to bound
    \[
    \int_{\TT^{s-1}} \|\triangle_{h_1, \cdots, h_{s-1}}f_j\|^{2c}_{\cH^{-\sigma}(\TT)}  \, dh_1,..., dh_{s-1} \leq
    \left(
    \int_{\TT^{s-1}} \|\triangle_{h_1, \cdots, h_{s-1}}f_j\|^{2}_{\cH^{-\sigma}(\TT)}  \, dh_1,..., dh_{s-1}
    \right)^c.
    \]
    In either case, we have
    \[
    (II)^{O_{\mathcal{P}}(1)}\lesssim_{\mathcal{P}}
    \min_{\substack{j=0,\cdots, k \\ j \neq i} }
    \int_{\TT^{s-1}} \|\triangle_{h_1, \cdots, h_{s-1}}f_j\|^{2}_{\cH^{-\sigma}(\TT)}  \, dh_1,..., dh_{s-1}.
    \]
And hence, by Lemma \ref{Lma_PJ}, we conclude that
\begin{equation} \label{Degree lowering_II(conclusion)}
    (II)^{O_{\mathcal{P}}(1)}\lesssim_{\mathcal{P}}
    \min_{\substack{j=0,\cdots, k \\ j \neq i} }
        \|f_j\|_{{U^{s}}(\TT)}.
\end{equation}
    Next, we estimate the first integral \eqref{Degree lowering_I}. By our assumption that Theorem \ref{Thm_Sobolev} holds for  $\mathcal{P}^i_{k-1}:= \mathcal{P}-\{i\}$, there exist $\sigma'=\sigma'(\mathcal{P})>0$ and $C'=C'(\mathcal{P}), c'=c'(\mathcal{P})>0$ such that
    \[
     \frac{1}{N} \int_0^N \int_{\TT} \triangle_{h_1, \cdots, h_{s-1}}F^i_y(x) \,dx \, dy 
    \]
    \[
    =
     \frac{1}{N} \int_0^N \int_{\TT} \prod_{\substack{j=0,\cdots, k \\ j \neq i}} \triangle_{h_1, \cdots, h_{s-1}}f_j(x+P_j(y)-P_i(y)) \,dx \, dy 
    \]
    \[
    =
     \frac{1}{N} \int_0^N \int_{\TT} \prod_{\substack{j=0,\cdots, k \\ j \neq i}} \triangle_{h_1, \cdots, h_{s-1}}f_j(x+P_j(y)) \,dx \, dy 
    \]
    \[
    =
        \prod_{\substack{j=0,\cdots, k \\ j \neq i}} \int_{\TT} \triangle_{h_1, \cdots, h_{s-1}} f_i + O_{\mathcal{P}} \left(
        N^{-C'} \cdot \min_{\substack{j=0,\cdots, k \\ j \neq i}}  \|\triangle_{h_1, \cdots, h_{s-1}}f_j\|^{c'}_{\cH^{-\sigma'}(\TT)}
        \right).
    \]
We can estimate the second term as above for \eqref{Degree lowering_II}.
Therefore, it suffices to estimate the integral
\[
\int_{\TT^{s-1}} \left| \prod_{\substack{j=0,\cdots, k \\ j \neq i}} \int_{\TT} \triangle_{h_1, \cdots, h_{s-1}} f_j \right|^2 \, dh_1,..., dh_{s-1},
\]
By 1-boundedness of $f_i$, we have
\[
\int_{\TT^{s-1}} \left| \prod_{\substack{j=0,\cdots, k \\ j \neq i}} \int_{\TT} \triangle_{h_1, \cdots, h_{s-1}} f_j \right|^2 \, dh_1,..., dh_{s-1}
\]
\[
\leq \min_{\substack{j=0,\cdots, k \\ j \neq i}} \int_{\TT^{s-1}} \left| \int_{\TT} \triangle_{h_1, \cdots, h_{s-1}} f_j \right|^2 \, dh_1,..., dh_{s-1}
\]
\[
= \min_{\substack{j=0,\cdots, k \\ j \neq i}} \|f_j\|^{2^s}_{U^s(\TT)}.
\]
To conclude, we also have
\begin{equation} \label{Degree lowering_I(conclusion)}
    (I)^{O_{\mathcal{P}}(1)}\lesssim_{\mathcal{P}}
    \min_{\substack{j=0,\cdots, k \\ j \neq i} }
        \|f_j\|_{{U^{s}}(\TT)}.
\end{equation}
Finally, combining inequalities \eqref{Degree lowering_0}, \eqref{Degree lowering_II(conclusion)}, and \eqref{Degree lowering_I(conclusion)} completes the proof.

\end{proof}

\subsection{Proof of the Sobolev smoothing inequality} \label{Proof of Theorem Thm_Sobolev}
Now, we are ready to prove Theorem \ref{Thm_Sobolev}.
\begin{proof}[Proof of Theorem \ref{Thm_Sobolev}]
We will proceed by induction.
We first prove the base case for $k=1$. By applying Parseval's identity to obtain
\[
    \Lambda_{\mathcal{P};N}(f_0,f_1)= \frac{1}{N}
    \int_0^N
    \int_{\TT} f_0(x)f_1(x+P_1(y)) \,dx \,dy
    \]
    \[
    =\int_{\TT}f_0 \cdot \int_{\TT}f_1 + \sum_{\xi \neq 0}\widehat{f_0}(\xi) \widehat{f_1}(-\xi) \left( \frac{1}{N} \int_0^N e(-\xi P_1(y)) \, dy \right).
    \]
    Next, by Lemma \ref{Lma_Oscillatory integral}, we have
    \[
    |\frac{1}{N} \int_0^N e(-\xi P_1(y))\, dy |\lesssim_{\mathcal{P}}(|\xi|N)^{-1/(\text{deg}(P_1))},
    \]
    Finally, we apply the Cauchy-Schwarz inequality to conclude that
    \[
    |\sum_{\xi \neq 0}\widehat{f_0}(\xi) \widehat{f_1}(-\xi) \left( \frac{1}{N} \int_0^N e(-\xi P_1(y)) \, dy \right)| \lesssim_{\mathcal{P}}
    N^{-1/(\text{deg}(P_1))} \cdot \|f_0\|_{\cH^{-\sigma}(\TT)} \cdot \|f_1\|_{\cH^{-\sigma}(\TT)} ,
    \]
    for $\sigma=1/(\text{deg}(P_1))$. Note that by 1-boundedness of $f_i$,
    \[
    \|f_0\|_{\cH^{-\sigma}(\TT)} \cdot \|f_1\|_{\cH^{-\sigma}(\TT)}
    \leq \min_{i=0,1}  \|f_i\|_{\cH^{-\sigma}(\TT)},
    \]
    Therefore, we have shown that
    \begin{equation} \label{two-term case}
        \Lambda_{\mathcal{P};N}(f_0,f_1)= \int_{\TT}f_0 \cdot \int_{\TT}f_1 +O_{\mathcal{P}} \left(
        N^{-1/(\text{deg}(P_1))} \cdot \min_{i=0,1}  \|f_i\|_{\cH^{-\sigma}(\TT)} \right),
    \end{equation}
    and this finishes the proof of Theorem \ref{Thm_Sobolev} for $k=1$.\\
    From now on, we will assume that $k \geq 2$ and that Theorem \ref{Thm_Sobolev} holds for $k-1$. We claim that there exist $\sigma'=\sigma'(\mathcal{P})>0$ and $C'=C'(\mathcal{P}), c'=c'(\mathcal{P})>0$ such that
        \begin{equation} \label{Proof_U^1}
            \left|\Lambda_{\mathcal{P};N}(f_0,...,f_k)  \right| ^{O_{\mathcal{P}}(1)}\lesssim_{\mathcal{P}}
        \min_{i=0,\cdots, k}  \|f_i\|_{U^1(\TT)}+
        N^{-C'} \cdot \min_{i=0,\cdots, k}  \|f_i\|^{c'}_{\cH^{-\sigma'}(\TT)} ,
        \end{equation}
        for any 1-bounded functions $f_0, \cdots, f_k $. We will first use the claim \eqref{Proof_U^1} to finish the proof of Theorem \ref{Thm_Sobolev}, and then prove the claim \eqref{Proof_U^1} later.\\
        For $i \in \{1,\cdots,k \}$, we write
        \[
        \Lambda_{\mathcal{P};N}(f_0,...,f_k)=
        \]
        \begin{equation} \label{Proof_decomposition}
            \Lambda_{\mathcal{P};N}(f_0,...,f_i-\int_{\TT}f_i,...,f_k)+\Lambda_{\mathcal{P};N}(f_0,...,\int_{\TT}f_i,...,f_k).
        \end{equation}
        For the first part of \eqref{Proof_decomposition}, we apply the claim \eqref{Proof_U^1} to conclude that
        \begin{equation} \label{Proof_decomposition_1}
            \left|\Lambda_{\mathcal{P};N}(f_0,...,f_i-\int_{\TT}f_i,...,f_k)  \right| ^{O_{\mathcal{P}}(1)}\lesssim_{\mathcal{P}}
        N^{-C'} \cdot \min_{\substack{j=0,\cdots, k \\ j \neq i} }
        \|f_j\|^{c'}_{\cH^{-\sigma'}(\TT)}.
        \end{equation}
        For the second part of \eqref{Proof_decomposition}, we apply the induction hypothesis for $\mathcal{P}^i_{k-1}:= \mathcal{P}-\{i\}$ to conclude that there exist $\sigma''=\sigma''(\mathcal{P})>0$ and $C''=C''(\mathcal{P}), c''=c''(\mathcal{P})>0$ such that
        \begin{equation} \label{Proof_decomposition_2}
            \Lambda_{\mathcal{P};N}(f_0,...,\int_{\TT}f_i,...,f_k)=
            \prod_{i=0}^k \int_{\TT} f_i  +O_{\mathcal{P}} \left(
        N^{-C''} \cdot \min_{\substack{j=0,\cdots, k \\ j \neq i} }
        \|f_j\|^{c''}_{\cH^{-\sigma''}(\TT)} \right).
        \end{equation}
        Combining \eqref{Proof_decomposition}, \eqref{Proof_decomposition_1}, and \eqref{Proof_decomposition_2} for $i \in \{1,\cdots,k \}$ completes the proof of Theorem \ref{Thm_Sobolev}. Thus, it remains to prove the claim \eqref{Proof_U^1}.\\
        Next, we want to apply Proposition \ref{Prop_Degree lowering} to perform the degree lowering method. The first assumption in Proposition \ref{Prop_Degree lowering} is satisfied by our induction hypothesis. The second assumption is the conclusion of Proposition \ref{Prop_Intermediate induction step} for $(k,l)=(k,1)$, and is true by our induction hypothesis. Lastly, the third assumption initially holds for $f_k$ by Theorem \ref{Thm_PET_Gowers}. By the monotonicity of the Gowers norm \eqref{Monotonicity}, we may assume that 
        \[
    \left|\Lambda_{\mathcal{P};N}(f_0,...,f_k)  \right| ^{O_{\mathcal{P}}(1)}\lesssim_{\mathcal{P}}
        \|f_k\|_{{U^{s+1}}(\TT)} ,
    \]
    for some $s \geq 2$. Therefore, by Proposition \ref{Prop_Degree lowering}, we have
    \[
    \left|\Lambda_{\mathcal{P};N}(f_0,...,f_{k})  \right| ^{O_{\mathcal{P}}(1)}\lesssim_{\mathcal{P}}
    \min_{\substack{j=0,\cdots, k \\ j \neq k} }
        \|f_j\|_{{U^{s}}(\TT)}
        \lesssim_{\mathcal{P}}
    \min_{\substack{j=0,\cdots, k \\ j \neq k} }
        \|f_j\|_{{U^{s+1}}(\TT)},
    \]
    where the last inequality follows again by the monotonicity of the Gowers norm \eqref{Monotonicity}. Since we assume that $k \geq 2$, by choosing any $i \in \{1, \cdots, k-1\}$, and by Proposition \ref{Prop_Degree lowering} again, we have
    \[
    \left|\Lambda_{\mathcal{P};N}(f_0,...,f_{k})  \right| ^{O_{\mathcal{P}}(1)}\lesssim_{\mathcal{P}}
    \min_{\substack{j=0,\cdots, k \\ j \neq i} }
        \|f_j\|_{{U^{s}}(\TT)}.
    \]
    Therefore, we have the ${U^{s}}(\TT)$ control for all $f_j$, where $j \in \{0, \cdots, k\}$. Repeatedly performing this procedure, we end up with the ${U^{2}}(\TT)$ control
    \begin{equation} \label{proof_U^2}
        \left|\Lambda_{\mathcal{P};N}(f_0,...,f_{k})  \right| ^{O_{\mathcal{P}}(1)}\lesssim_{\mathcal{P}}
    \min_{\substack{j=0,\cdots, k } }
        \|f_j\|_{{U^{2}}(\TT)}.
    \end{equation}
    To go from the ${U^{2}}(\TT)$ control to the ${U^{1}}(\TT)$ control, the claim \eqref{Proof_U^1}, the idea is almost the same as the proof in Proposition \ref{Prop_Intermediate induction step}. Now choose any $i \in \{1, \cdots, k\}$, and recall the dual function
    \[
    F^i(x)= \frac{1}{N} \int_0^N \prod_{\substack{j=0,\cdots, k \\ j \neq i}} f_j(x+P_j(y)-P_i(y)) \,dy.
    \]
    By Lemma \ref{Lma_Dual control}, \eqref{proof_U^2}, and \eqref{U^2-inverse theorem}, we have
    \begin{equation} \label{U^2_Degree lowering_0}
        \left|\Lambda_{\mathcal{P};N}(f_0,...,f_{k})  \right| ^{O_{\mathcal{P}}(1)}\lesssim_{\mathcal{P}} (I)+(II),
    \end{equation}
    where
    \begin{equation} \label{U^2_Degree lowering_I}
        (I):= \left| \frac{1}{N} \int_0^N \int_{\TT} F^i_y(x) \,dx \, dy \right| ,
    \end{equation}
    \begin{equation} \label{U^2_Degree lowering_II}
        (II):=\sup_{\xi_i \in \ZZ-\{0\}} \left| \frac{1}{N} \int_0^N \int_{\TT}e(\xi_i x) F^i_y(x)  \,dx \, dy \right| .
    \end{equation}
    For the first term \eqref{U^2_Degree lowering_I}, by our induction hypothesis that Theorem \ref{Thm_Sobolev} holds for  $\mathcal{P}^i_{k-1}:= \mathcal{P}-\{i\}$, there exist $\sigma''=\sigma''(\mathcal{P})>0$ and $C''=C''(\mathcal{P}), c''=c''(\mathcal{P})>0$ such that
    \[
     \left|\frac{1}{N} \int_0^N \int_{\TT} F^i_y(x) \,dx \, dy  \right|
    =
     \left|\frac{1}{N} \int_0^N \int_{\TT} \prod_{\substack{j=0,\cdots, k \\ j \neq i}} f_j(x+P_j(y)) \,dx \, dy \right|
    \]
    \[
    \leq
        \left|\prod_{\substack{j=0,\cdots, k \\ j \neq i}} \int_{\TT}  f_i \right| + O_{\mathcal{P}} \left(
        N^{-C''} \cdot \min_{\substack{j=0,\cdots, k \\ j \neq i}}  \|f_j\|^{c''}_{\cH^{-\sigma''}(\TT)}
        \right)
    \]
    \[
    \leq
    \min_{\substack{j=0,\cdots, k \\ j \neq i}}  \|f_j\|^{1/2}_{U^1(\TT)} + O_{\mathcal{P}} \left(
        N^{-C''} \cdot \min_{\substack{j=0,\cdots, k \\ j \neq i}}  \|f_j\|^{c''}_{\cH^{-\sigma''}(\TT)}
        \right),
    \]
    where the last inequality follows from the 1-boundedness of $f_j$.\\
    For the second term \eqref{U^2_Degree lowering_II}, by Proposition \ref{Prop_Intermediate induction step} for $(k,l)=(k,1)$, there exist $\sigma'''=\sigma'''(\mathcal{P})>0$ and $C'''=C'''(\mathcal{P}), c'''=c'''(\mathcal{P})>0$ such that
    \[
    \left| \frac{1}{N} \int_0^N \int_{\TT}e(\xi_i x) F^i_y(x)  \,dx \, dy \right|
    \]
    \[
    =\left|
     \frac{1}{N} \int_0^N \int_{\TT}e(\xi_i (x+P_i(y))) \prod_{\substack{j=0,\cdots, k \\ j \neq i}} f_j(x+P_j(y)) \,dx \, dy \right|
    \]
    \[
    \lesssim_{\mathcal{P}}
        N^{-C'''} \cdot \min_{\substack{j=0,\cdots, k \\ j \neq i} }
        \|f_j\|^{c'''}_{\cH^{-\sigma'''}(\TT)}.
    \]
    Finally, combining \eqref{U^2_Degree lowering_0}, and the estimates for \eqref{U^2_Degree lowering_I} and \eqref{U^2_Degree lowering_II} for $i \in \{1,\cdots,k \}$ completes the proof of the claim \eqref{Proof_U^1}; and hence, completes the proof of Theorem \ref{Thm_Sobolev}.
\end{proof}

\section{Proof of Theorem \ref{Thm_Progression}} \label{section 4}
It is known in the literature, as in \cite{FGP22}, that studying the counting operators with Frostman measures as input helps to find nontrivial polynomial progressions, and we largely follow their strategy. However, several technicalities must be overcome.

Firstly, the Sobolev inequality we proved in Theorem \ref{Thm_Sobolev} is not as strong as the one in \cite{FGP22}; hence, we need additional ideas from the property of the Frostman measures to make sense of the counting operators for the Frostman measures.
Secondly, since we aim to find nontrivial polynomial progressions, we must eliminate the contribution of trivial progressions from the counting operators. We achieve this goal by considering the counting operators with a smooth cutoff and studying the corresponding asymptotic formula.
Finally, we will deduce the existence of the nontrivial polynomial progressions from the results obtained in the previous steps.

\subsection{Counting operators for the Frostman measures} \label {Counting operators for the Frostman measures}
This subsection aims to prove the following theorem, which can be regarded as the Sobolev inequality for the Frostman measures. 
\begin{theorem} \label{Thm_Counting}
    Let $k \in \NN$, $N \geq 1$, and let $\mathcal{P}= \{P_1, \cdots, P_{k}\in \RR[y]\}$ be a collection of polynomials with distinct degrees and zero constant terms.
    Then there exist $\epsilon=\epsilon(\mathcal{P})>0$ and $C=C(\mathcal{P})>0$, $c=c(\mathcal{P})>0$ such that
        \[
        \Lambda_{\mathcal{P};N}(\mu_0,...,\mu_k) = \prod_{i=0}^k \int_{\TT} \mu_i
        +O_{\mathcal{P}}\left(N^{-C} \cdot \prod_{i=0}^k  \|\mu_i\|_{\cH^{-\epsilon}(\TT)}^{c} \right) ,
        \]
        for any s-Frostman measures $\mu_i$ with $s \in (1-\epsilon,1]$ and $\text{spt}(\mu_i) \subset \TT$, where $i\in \{0,\cdots,k\}$.
\end{theorem}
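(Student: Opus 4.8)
\textbf{Proof proposal for Theorem~\ref{Thm_Counting}.}
The plan is to derive Theorem~\ref{Thm_Counting} from the $1$-bounded Sobolev smoothing inequality, Theorem~\ref{Thm_Sobolev}, by mollifying the Frostman measures at a scale $\delta$ that is a small power of $1/N$, and carefully tracking the loss incurred by renormalizing the mollified measures. Fix a non-negative $\psi\in C_c^\infty(\RR)$ supported in $[-1,1]$ with $\int\psi=1$, put $\psi_\delta(x)=\delta^{-1}\psi(x/\delta)$ (periodized to $\TT$, which is harmless for $\delta<1/2$), and set $\mu_i^\delta:=\mu_i*\psi_\delta$. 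We record three elementary facts: (a) the Frostman bound gives $\|\mu_i^\delta\|_{\cL^\infty(\TT)}\lesssim_\psi\delta^{s-1}$; (b) total mass is preserved, $\int_\TT\mu_i^\delta=\int_\TT\mu_i$; (c) since $\widehat{\mu_i^\delta}(\xi)=\widehat{\mu_i}(\xi)\,\widehat\psi(\delta\xi)$ with $|\widehat\psi|\le1$, $\widehat\psi(0)=1$ and $\widehat\psi$ Lipschitz, one has $\|\mu_i^\delta\|_{\cH^{-\sigma}(\TT)}\le\|\mu_i\|_{\cH^{-\sigma}(\TT)}$ and, for $\delta'<\delta$,
\[
\|\mu_i^{\delta'}-\mu_i^{\delta}\|_{\cH^{-\sigma}(\TT)}\;\lesssim_\psi\;\delta^{\theta}\,\|\mu_i\|_{\cH^{-\epsilon}(\TT)}
\]
for some $\theta=\theta(\mathcal{P})>0$, provided $\epsilon<\sigma$; this last estimate follows from $|\widehat\psi(\delta'\xi)-\widehat\psi(\delta\xi)|\lesssim\min(1,\delta|\xi|)$, interpolation, and absorbing the resulting power of $|\xi|$ into the negative Sobolev weight. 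Here $\sigma=\sigma(\mathcal{P})$ and $c=c(\mathcal{P}),C=C(\mathcal{P})$ are the constants of Theorem~\ref{Thm_Sobolev}, and we may assume $c\le1$; by Proposition~\ref{Norm Equivalence}, $\|\mu_i\|_{\cH^{-\epsilon}(\TT)}<\infty$ whenever $s>1-\epsilon$. We understand $\Lambda_{\mathcal{P};N}(\mu_0,\dots,\mu_k):=\lim_{\delta\to0}\Lambda_{\mathcal{P};N}(\mu_0^\delta,\dots,\mu_k^\delta)$; the argument below shows this limit exists.

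\emph{Step 1 (the $1$-bounded input at scale $\delta$).} Since $\Lambda_{\mathcal{P};N}$ is multilinear and $c_\psi\delta^{1-s}\mu_i^\delta$ is $1$-bounded for a suitable $c_\psi=c_\psi(\psi)$, Theorem~\ref{Thm_Sobolev} applied to these functions, together with (b) and the elementary inequality $\min_i a_i\le(\prod_i a_i)^{1/(k+1)}$ used to convert the $\min$ into a product, yields
\[
\Bigl|\Lambda_{\mathcal{P};N}(\mu_0^\delta,\dots,\mu_k^\delta)-\prod_{i=0}^k\int_\TT\mu_i\Bigr|\;\lesssim_{\mathcal{P}}\;N^{-C}\,\delta^{-(k+1-c)(1-s)}\prod_{i=0}^k\|\mu_i\|_{\cH^{-\epsilon}(\TT)}^{\,c/(k+1)}.
\]
The factor $\delta^{-(k+1-c)(1-s)}$ is exactly the price of renormalizing the mollified measures, whose sup-norms are of size $\delta^{s-1}$, to be $1$-bounded (when $c\ge k+1$ it is instead a gain).

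\emph{Step 2 (descending to scale $0$).} To pass from $\mu_i^\delta$ to $\mu_i$ I telescope \emph{between mollified scales}: along the dyadic scales $\delta/2^m$,
\[
\Lambda_{\mathcal{P};N}(\mu_0^{\delta/2},\dots,\mu_k^{\delta/2})-\Lambda_{\mathcal{P};N}(\mu_0^{\delta},\dots,\mu_k^{\delta})=\sum_{i=0}^{k}\Lambda_{\mathcal{P};N}\bigl(\mu_0^{\delta/2},\dots,\mu_i^{\delta/2}-\mu_i^{\delta},\dots,\mu_k^{\delta}\bigr),
\]
and in each summand \emph{every} slot is a bounded function, so Theorem~\ref{Thm_Sobolev} applies after renormalization; crucially $\int_\TT(\mu_i^{\delta/2}-\mu_i^{\delta})=0$ by (b), so the main term vanishes and the summand is controlled solely by negative Sobolev norms. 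Feeding in (c) gives
\[
\Bigl|\Lambda_{\mathcal{P};N}(\mu_0^{\delta/2},\dots,\mu_k^{\delta/2})-\Lambda_{\mathcal{P};N}(\mu_0^{\delta},\dots,\mu_k^{\delta})\Bigr|\;\lesssim_{\mathcal{P}}\;N^{-C}\,\delta^{\eta}\prod_{i=0}^k\|\mu_i\|_{\cH^{-\epsilon}(\TT)}^{\,c/(k+1)},\qquad\eta:=\tfrac{\theta c}{k+1}-(k+1-c)(1-s).
\]
If $\eta>0$ the dyadic series converges geometrically, so $\lim_{\delta\to0}\Lambda_{\mathcal{P};N}(\mu_0^\delta,\dots,\mu_k^\delta)$ exists (a further telescoping shows it is independent of $\psi$), and summing the series yields
\[
\Bigl|\Lambda_{\mathcal{P};N}(\mu_0,\dots,\mu_k)-\Lambda_{\mathcal{P};N}(\mu_0^\delta,\dots,\mu_k^\delta)\Bigr|\;\lesssim_{\mathcal{P}}\;N^{-C}\,\delta^{\eta}\prod_{i=0}^k\|\mu_i\|_{\cH^{-\epsilon}(\TT)}^{\,c/(k+1)}.
\]

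\emph{Step 3 (optimization).} Combining the displays of Steps 1 and 2 and taking $\delta$ a dyadic scale comparable to $1/N$ gives
\[
\Bigl|\Lambda_{\mathcal{P};N}(\mu_0,\dots,\mu_k)-\prod_{i=0}^k\int_\TT\mu_i\Bigr|\;\lesssim_{\mathcal{P}}\;N^{-C+(k+1-c)(1-s)}\prod_{i=0}^k\|\mu_i\|_{\cH^{-\epsilon}(\TT)}^{\,c/(k+1)}.
\]
It remains to fix $\epsilon=\epsilon(\mathcal{P})>0$ small enough that, for every $s\in(1-\epsilon,1]$, one has simultaneously $(k+1-c)\epsilon<C$ (so the exponent of $N$ is a strictly negative $\mathcal{P}$-constant), $\eta>0$ (so Step 2 converges; using $\theta\gtrsim_{\mathcal{P}}1$ this is again a bound of the form $(k+1-c)(1-s)<\mathrm{const}(\mathcal{P})$), and $\epsilon<\sigma$ (so $\theta$ exists as in (c)). Since $1-s<\epsilon$, this gives the claim with $C'=C-(k+1-c)\epsilon>0$ and $c'=c/(k+1)>0$ (for bounded $N$ one fixes $\delta$ to be a small $\mathcal{P}$-constant instead). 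The main obstacle is precisely this balancing: reducing to the $1$-bounded Theorem~\ref{Thm_Sobolev} unavoidably produces the blow-up $\delta^{-(k+1-c)(1-s)}$, and the only compensation is the power gain $\delta^\theta$ of fact (c)---the negative-Sobolev smoothness of $\mu_i^{\delta'}-\mu_i^{\delta}$---which is a genuine gain, and overcomes the blow-up, exactly when $1-s$ is small, i.e.\ under the hypothesis $\dH(\text{spt}(\mu_i))>1-\epsilon$. Establishing (c) via the Frostman--Riesz-energy input of Proposition~\ref{Norm Equivalence} is the key technical point.
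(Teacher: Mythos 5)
Your argument is correct but follows a genuinely different route from the paper's. The paper decomposes each $\mu_i$ dyadically in frequency via Littlewood--Paley projections $\Pi_j$, applies an $\cL^\infty$/$\cH^{-\sigma}$-interpolated version of Theorem~\ref{Thm_Sobolev} (Proposition~\ref{Prop_Sobolev}) to every multi-indexed term $\Lambda_{\mathcal{P};N}(\Pi_{j_0}\mu_0,\dots,\Pi_{j_k}\mu_k)$, controls the $\cL^\infty$-blowup by a Dirichlet-kernel estimate (Lemma~\ref{Upperbound}, giving $\|\Pi_j\mu\|_{\cL^\infty}\lesssim_\tau 2^{j(1-s+\tau)}$), and sums the resulting $(k+1)$-fold series using the geometric gain $\|\Pi_j\mu\|_{\cH^{-\sigma}}\sim 2^{-j\sigma}\|\Pi_j\mu\|_{\cL^2}$. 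You instead mollify at a single physical scale $\delta\sim1/N$, apply Theorem~\ref{Thm_Sobolev} directly to the renormalized $\delta^{1-s}\mu_i^\delta$, and then telescope over dyadic mollification scales, with the Lipschitz bound on $\widehat\psi$ (your fact (c), $\theta=(\sigma-\epsilon)/2$) supplying the geometric decay that the paper gets from the frequency localization of $\Pi_j$. The two schemes are morally dual --- both trade the Frostman $\cL^\infty$-blowup against negative-Sobolev savings and both rely on $\min_i a_i\le(\prod_i a_i)^{1/(k+1)}$ --- but yours avoids the logarithmic loss and the $\tau$-fudge factor forced by the sharp frequency cutoff in Lemma~\ref{Upperbound}, at the price of the extra telescoping step and the $\delta$-optimization. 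One minor caveat: you and the paper implicitly define $\Lambda_{\mathcal{P};N}$ on singular measures in different (but compatible) ways --- yours as a mollification limit, the paper's as a Littlewood--Paley sum --- and your interpretation is actually closer to the Fejér-kernel mollification the paper uses downstream in Proposition~\ref{measure for the progression}, so nothing is lost.
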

As mentioned above, this is a weaker version of the Sobolev inequality compared to the one obtained in \cite{FGP22}; however, it is sufficient for finding polynomial patterns within fractal sets of large dimension. It remains an interesting question to see if one can strengthen Theorem \ref{Thm_Counting} by removing the Frostman measures assumption.

 Next, we will use the following direct consequence of Theorem \ref{Thm_Sobolev}.
\begin{proposition} \label{Prop_Sobolev}
Let $k \in \NN$, $N \geq 1$, and let $\mathcal{P}= \{P_1, \cdots, P_{k}\in \RR[y]\}$ be a collection of polynomials with distinct degrees and zero constant terms. Then there exist $\sigma=\sigma(\mathcal{P})>0$ and $C=C(\mathcal{P})$,  $c=c(\mathcal{P})>0$ \footnote{Without loss of generality, we may assume that $c=c(\mathcal{P}) \in (0,1]$. If $c>1$, we can use the fact that $\|f\|_{\cH^{-\sigma}(\TT)} \leq \|f\|_{\cL^{\infty}(\TT)}$ to reduce the case when $c=1$.} such that
        \[
        \left|\Lambda_{\mathcal{P};N}(f_0,...,f_k) -\prod_{i=0}^k \int_{\TT} f_i \right| \lesssim_{\mathcal{P}}
        N^{-C} \cdot \prod_{i=0}^k \left( \|f_i\|^{1-c}_{\cL^{\infty}(\TT)}  \|f_i\|^{c}_{\cH^{-\sigma}(\TT)} \right) ,
        \]
        for any $f_0, \cdots, f_k \in \cL^{\infty}(\TT)$.
\end{proposition}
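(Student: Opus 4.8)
\emph{Plan.} This is Theorem \ref{Thm_Sobolev} combined with a rescaling argument. The one thing that requires care is that Theorem \ref{Thm_Sobolev} bounds the error by the \emph{minimum} of the negative Sobolev norms, whereas here we want a \emph{product}; passing from one to the other costs a factor of $k+1$ in the Sobolev exponent, which is harmless since that exponent is allowed to depend on $\mathcal{P}$ (hence on $k$).

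First I would dispose of the degenerate case: if $\|f_i\|_{\cL^{\infty}(\TT)}=0$ for some $i$, then $f_i=0$ almost everywhere, so both $\Lambda_{\mathcal{P};N}(f_0,\dots,f_k)$ and $\prod_{i=0}^k\int_\TT f_i$ vanish and there is nothing to prove. So assume $a_i:=\|f_i\|_{\cL^{\infty}(\TT)}>0$ for all $i$ and set $g_i:=f_i/a_i$, which are $1$-bounded. Using the multilinearity of $\Lambda_{\mathcal{P};N}$ and the identity $\int_\TT f_i=a_i\int_\TT g_i$, one has
\[
\Lambda_{\mathcal{P};N}(f_0,\dots,f_k)-\prod_{i=0}^k\int_\TT f_i=\Big(\prod_{i=0}^k a_i\Big)\Big(\Lambda_{\mathcal{P};N}(g_0,\dots,g_k)-\prod_{i=0}^k\int_\TT g_i\Big).
\]
Applying Theorem \ref{Thm_Sobolev} to the $1$-bounded tuple $(g_0,\dots,g_k)$ produces $\sigma=\sigma(\mathcal{P})>0$ and $C=C(\mathcal{P}),\,c_0=c_0(\mathcal{P})>0$; since $\|g_i\|_{\cH^{-\sigma}(\TT)}\le\|g_i\|_{\cL^{\infty}(\TT)}\le 1$ we may replace $c_0$ by $\min(c_0,1)$ and so assume $c_0\in(0,1]$, with
\[
\Big|\Lambda_{\mathcal{P};N}(g_0,\dots,g_k)-\prod_{i=0}^k\int_\TT g_i\Big|\lesssim_{\mathcal{P}}N^{-C}\min_{0\le i\le k}\|g_i\|_{\cH^{-\sigma}(\TT)}^{c_0}.
\]

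To conclude I would invoke the elementary bound $\min_{0\le i\le k}x_i\le\big(\prod_{i=0}^k x_i\big)^{1/(k+1)}$ for nonnegative reals (each factor is at least the minimum), applied with $x_i=\|g_i\|_{\cH^{-\sigma}(\TT)}^{c_0}$, recall $\|g_i\|_{\cH^{-\sigma}(\TT)}=\|f_i\|_{\cH^{-\sigma}(\TT)}/a_i$, and multiply back by $\prod_i a_i$ to obtain
\[
\Big|\Lambda_{\mathcal{P};N}(f_0,\dots,f_k)-\prod_{i=0}^k\int_\TT f_i\Big|\lesssim_{\mathcal{P}}N^{-C}\prod_{i=0}^k\Big(\|f_i\|_{\cL^{\infty}(\TT)}^{1-c}\,\|f_i\|_{\cH^{-\sigma}(\TT)}^{c}\Big),
\]
with $c:=c_0/(k+1)\in(0,1]$; taking $C'=C$, $\sigma'=\sigma$, $c'=c$ is exactly the asserted inequality. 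There is no genuine obstacle, and in particular no negative-Sobolev input is needed beyond Theorem \ref{Thm_Sobolev} itself; one only keeps track of the fact that the exponent shrinks by the factor $k+1$ when the minimum is traded for a product.
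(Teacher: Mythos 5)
Your proof is correct, and it is the argument the paper has in mind: the paper states Proposition \ref{Prop_Sobolev} as a ``direct consequence'' of Theorem \ref{Thm_Sobolev} without writing out the deduction, while the footnote only records the reduction to $c\in(0,1]$ via $\|f\|_{\cH^{-\sigma}(\TT)}\le\|f\|_{\cL^\infty(\TT)}$. Your normalization $g_i=f_i/\|f_i\|_{\cL^\infty}$, multilinearity, and the passage from the minimum to the product via $\min_i x_i\le(\prod_i x_i)^{1/(k+1)}$ (at the cost of dividing the Sobolev exponent by $k+1$) fill in exactly the intended steps, and the degenerate case $\|f_i\|_{\cL^\infty}=0$ is correctly disposed of.
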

The main difference between Proposition \ref{Prop_Sobolev} and Theorem \ref{Thm_Sobolev} is how to remove the $\cL^{\infty}$-control. It is important in practice, since the Frostman measure we use is supported on the fractal set, and it is often an unbounded "function." To overcome this issue, we use the Littlewood-Paley decomposition.

\begin{definition}[Littlewood-Paley decomposition] For any $j \in \NN$, define 
\[
\widehat{\Pi_jf}(\xi):= \widehat{f}(\xi) 1_{\{ |\xi|\in[2^{j-1},2^j) \}}(\xi), \, \text{and} \, \Pi_0f:= \int_{\TT}f,
\]
for any $ f \in \cL^2(\TT)$.
\end{definition}

The next Lemma \ref{Upperbound} plays an essential role in this paper, despite its simplicity. It says that the $\cL^{\infty}$ norm of each Littlewood-Paley piece is well controlled when applied to the Frostman measure. Although the control is not uniformly bounded, we achieve additional savings due to the negative Sobolev norm, which ultimately enables us to prevail.

\begin{lemma} \label{Upperbound}
    Let $\mu$ be any s-Frostman for $s \in (0,1]$, then for any small $\tau>0$, we have
    \[
    \|\Pi_j\mu\|_{\cL^{\infty}(\TT)}
    \lesssim_{\tau} 2^{j(1-s+\tau)}.
    \]
\end{lemma}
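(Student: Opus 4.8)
### Proof proposal for Lemma \ref{Upperbound}

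The plan is to bound the $\cL^\infty$ norm of $\Pi_j\mu$ by estimating its Fourier coefficients pointwise and summing. Since $\Pi_j\mu$ is a trigonometric polynomial supported on frequencies $\xi$ with $|\xi|\in[2^{j-1},2^j)$, we have, for every $x\in\TT$,
\[
|\Pi_j\mu(x)| = \left| \sum_{|\xi|\in[2^{j-1},2^j)} \wh{\mu}(\xi)\, e(\xi x) \right| \leq \sum_{|\xi|\in[2^{j-1},2^j)} |\wh{\mu}(\xi)|.
\]
There are $O(2^j)$ terms in this sum, so if I can show $\sup_{\xi}|\wh\mu(\xi)| \lesssim_\tau 2^{-j(s-\tau)}$ uniformly over $|\xi|\sim 2^j$, I obtain the claimed bound $2^{j(1-s+\tau)}$. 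So the whole task reduces to an individual Fourier decay estimate for an $s$-Frostman measure: $|\wh\mu(\xi)| \lesssim_\tau |\xi|^{-(s-\tau)}$ for all $\xi\neq 0$.

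First I would establish this pointwise decay. This is a standard consequence of the Frostman condition $\mu(B(x,r))\le r^s$, obtained by splitting the integral $\wh\mu(\xi)=\int e(-\xi x)\,d\mu(x)$ according to a dyadic decomposition of the distance to a suitable point, or equivalently by a smoothing argument: convolve $\mu$ with an approximate identity at scale $\delta\sim|\xi|^{-1}$, use that the smoothed measure has $\cL^\infty$ norm $\lesssim \delta^{s-1}$ (again from the Frostman bound, since each ball of radius $\delta$ carries mass at most $\delta^s$ and the bump has height $\delta^{-1}$), estimate the error $\|\mu - \mu*\phi_\delta\|$ in a weak sense, and balance. One clean route: for the partial sum itself, use the van der Corput / Erdős–Turán type bound, or more elementarily bound $\sum_{|\xi|\sim 2^j}|\wh\mu(\xi)|^2$ via Plancherel against the energy integral. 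Indeed, by the relation between the Riesz energy and the Sobolev norm recalled in the excerpt (the torus analogue of the Proposition preceding Proposition \ref{Norm Equivalence}), an $s$-Frostman measure satisfies $\sum_\xi |\wh\mu(\xi)|^2(1+|\xi|^2)^{-(1-t)/2}\lesssim_{s,t} 1$ for every $t<s$; restricting the sum to $|\xi|\sim 2^j$ gives $\sum_{|\xi|\sim 2^j}|\wh\mu(\xi)|^2 \lesssim_t 2^{j(1-t)}$. By Cauchy–Schwarz over the $O(2^j)$ frequencies,
\[
\|\Pi_j\mu\|_{\cL^\infty(\TT)} \le \sum_{|\xi|\sim 2^j}|\wh\mu(\xi)| \le \paren{2^j}^{1/2}\paren{\sum_{|\xi|\sim 2^j}|\wh\mu(\xi)|^2}^{1/2} \lesssim_t 2^{j/2}\cdot 2^{j(1-t)/2} = 2^{j(1-t/2)}.
\]
Taking $t = 2(s-\tau)$ (legitimate as long as $\tau$ is small enough that $t<s$, i.e. $\tau < s/2$, which suffices since $\tau$ may be shrunk) yields exactly $\|\Pi_j\mu\|_{\cL^\infty(\TT)}\lesssim_\tau 2^{j(1-s+\tau)}$.

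The main obstacle, such as it is, is bookkeeping: making sure the energy–Sobolev equivalence on $\TT$ is available in the form needed (the excerpt only states the $\RR^d$ version precisely, with the torus version asserted to hold analogously via Fourier coefficients), and tracking that the implicit constants depend only on $\tau$ (and $s$, but uniformly for $s$ bounded away from $0$) rather than on $\mu$. I should also handle the small-$\tau$ constraint cleanly: the estimate is only nontrivial and is used for $\tau$ small, so restricting to $\tau\in(0,s/2)$ and noting the bound is monotone in $\tau$ covers all cases of interest. No compactness or probability-measure normalization is needed beyond the remark in the excerpt that a compactly supported Frostman measure is finite; in fact the cleanest statement does not even require finiteness since everything is controlled by the local Frostman bound.
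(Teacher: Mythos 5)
The central idea of the proposal does not work, for two related reasons.

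First, the premise that an $s$-Frostman measure satisfies the \emph{pointwise} Fourier decay $|\wh\mu(\xi)|\lesssim_\tau|\xi|^{-(s-\tau)}$ is false. The standard middle-thirds Cantor measure is (up to normalization) an $s$-Frostman measure with $s=\log 2/\log 3$, yet $\wh\mu(3^n)=\wh\mu(1)\neq0$ for all $n$, so $\sup_{|\xi|\sim 2^j}|\wh\mu(\xi)|$ does not decay at all along a subsequence of $j$'s. The Frostman condition only yields decay of the Fourier coefficients in an $\ell^2$-averaged sense over dyadic annuli, which is exactly what the energy bound (Proposition \ref{Norm Equivalence}) records.

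Second, the fallback $\ell^2$ route is too lossy. You have $\sum_{|\xi|\sim 2^j}|\wh\mu(\xi)|^2\lesssim 2^{j(1-t)}$ for every $t<s$; Cauchy--Schwarz over the $O(2^j)$ frequencies then gives
\[
\|\Pi_j\mu\|_{\cL^\infty(\TT)}\le\sum_{|\xi|\sim 2^j}|\wh\mu(\xi)|\lesssim 2^{j/2}\cdot 2^{j(1-t)/2}=2^{j(1-t/2)},
\]
whose exponent is at best $1-s/2+\eps$, not $1-s+\tau$. Since $1-s/2>1-s$ whenever $s>0$, this is strictly weaker than the lemma for small $\tau$. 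Relatedly, your substitution $t=2(s-\tau)$ requires $t<s$, which is equivalent to $\tau>s/2$, the opposite of the stated constraint $\tau<s/2$; so the substitution only reproduces the lemma's bound in the regime $\tau>s/2$, not for small $\tau$. The loss comes from two directions at once: passing from the Frostman ball condition to the energy already costs a square root on the pointwise Fourier size, and replacing $\|\Pi_j\mu\|_\infty$ by $\sum|\wh\mu(\xi)|$ throws away all phase cancellation in the Fourier series.

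What actually works (and what the paper does) is precisely the ``smoothing argument'' you mention in passing but do not carry out: write $\Pi_j\mu=(D_{2^j-1}-D_{2^{j-1}-1})*\mu$ with $D_M$ the Dirichlet kernel, and estimate $|D_M*\mu(x)|\le\int|D_M(y)|\,d\mu(x-y)$ directly. Decomposing $[0,1/2)$ into intervals $I_\ell$ of length $\sim 1/M$, using $|D_M|\lesssim M/\ell$ on $I_\ell$, and using the Frostman bound $\mu(x-I_\ell)\lesssim M^{-s}$ gives $\sum_\ell (M/\ell)M^{-s}\sim M^{1-s}\log M$, hence $\|\Pi_j\mu\|_\infty\lesssim j\,2^{j(1-s)}\lesssim_\tau 2^{j(1-s+\tau)}$. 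This exploits the Frostman condition in an $L^1(\mu)$ fashion against a kernel, rather than an $\ell^2$ fashion against the Fourier coefficients, which is exactly what recovers the full exponent $1-s$.
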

\begin{proof} [Proof of Lemma \ref{Upperbound}]
The case for $j=0$ is trivial, so let's assume from now on that $j \in \NN$.
    Recall the Dirichlet kernel
\[
D_M(x):= \sum_{|m| \leq M} e^{2\pi i mx}= \frac{\text{sin}((2M+1)\pi x)}{\text{sin}(\pi x)},
\]
where $M \in \NN$.
We first claim that 
    \begin{equation} \label{Growth rate control}
        \|D_M*\mu\|_{\cL^{\infty}(\TT)} \lesssim M^{1-s} \text{log} M.
    \end{equation}
We can finish the proof quickly with the claim \eqref{Growth rate control}. Note that 
    \[
    \Pi_jf=(D_{2^j-1}-D_{2^{j-1}-1})*f
    \]
And hence,
\[
\|\Pi_j\mu\|_{\cL^{\infty}(\TT)} \leq \|D_{2^j-1}*\mu\|_{\cL^{\infty}(\TT)}+\|D_{2^{j-1}-1}*\mu\|_{\cL^{\infty}(\TT)} 
\]
\[
\lesssim j\cdot 2^{j(1-s)}  \lesssim_{\tau} 2^{j(1-s+\tau)}.
\]
Next, let's return to the proof of the claim \eqref{Growth rate control}. We want to understand the size of $|D_M(x)|$.
We first note that $D_M(x)=D_M(-x)$, and $D_M(x)$ is 1-periodic, so it suffices to to look at $D_M(x)$ in the interval $[0,1/2)$. Also, when $x \in [0,1/2)$,
\[
|\frac{\text{sin}(\pi x)}{\pi x}| \sim 1,
\]
so we may treat 
\[
|D_M(x)| \sim |\frac{\text{sin}((2M+1)\pi x)}{\pi x}|.
\]
Next, we decompose the interval $[0,1/2)$ into a union of smaller consecutive intervals of length $1/(2M+1)$:
\[
[0,1/2)= \left( \bigcup_{j=1}^{M} [\frac{j-1}{(2M+1)},\frac{j}{(2M+1)}) \right)
\bigcup [\frac{M}{(2M+1)},\frac{1}{2}),
\]
where we denote, for $j=1, \cdots, M$,
\[
I_j:= [\frac{j-1}{(2M+1)},\frac{j}{(2M+1)}),
\]
and 
\[
I_{M+1}:= [\frac{M}{(2M+1)},\frac{1}{2}).
\]
Therefore, we have for $j=1, \cdots, M+1$,
\[
\sup_{x\in I_j} |D_M(x)| \lesssim \frac{M}{j}.
\]
Finally, we have
\[
|D_M*\mu(x)| = |\int_0^1 D_M(y) \mu (x-y) \, dy|
\]
\[
\leq |\int_0^{1/2} D_M(y) \mu (x-y) \, dy|+
|\int_{1/2}^1 D_M(y) \mu (x-y) \, dy|.
\]
For the first term, by the definition of the s-Frostman measure,
\[
|\int_0^{1/2} D_M(y) \mu (x-y) \, dy| \leq
\sum_{j=1}^{M+1} \int_{I_j} |D_M(y) \mu (x-y)| \, dy
\]
\[
\lesssim \sum_{j=1}^{M+1} \frac{M}{j} \int_{I_j} \mu (x-y) \, dy
\lesssim \sum_{j=1}^{M+1} \frac{M}{j} \cdot M^{-s} \
\]
\[
\sim M^{1-s} \text{log} M . 
\]
Similarly, for the second term, we also have 
\[
|\int_{1/2}^1 D_M(y) \mu (x-y) \, dy| \lesssim M^{1-s} \text{log} M ,
\]
and this completes the proof.
\end{proof}
Now we are ready to prove Theorem \ref{Thm_Counting}.
\begin{proof} [Proof of Theorem \ref{Thm_Counting}]
    Let $\sigma=\sigma(\mathcal{P})>0$ and $C=C(\mathcal{P})>0$,  $c=c(\mathcal{P})\in (0,1]$ be given in Proposition \ref{Prop_Sobolev}. Let $\epsilon>0$ be chosen later, and $s \in (1-\epsilon,1]$. Note that
    \[
    \Lambda_{\mathcal{P};N}(\Pi_{0}\mu_0,...,\Pi_{0}\mu_k)=\prod_{i=0}^k \int_{\TT} \mu_i,
     \quad \text{and} \quad \int_{\TT} \Pi_j\mu=0, \forall j\geq1.
    \]
Therefore, it suffices to show that
    \[
    \left|\left(\sum_{j_0=0}^\infty \cdots \sum_{j_k=0}^\infty
    \Lambda_{\mathcal{P};N}(\Pi_{j_0}\mu_0,...,\Pi_{j_k}\mu_k)\right)-\Lambda_{\mathcal{P};N}(\Pi_{0}\mu_0,...,\Pi_{0}\mu_k) \right|
    \lesssim_{\mathcal{P}} N^{-C} \cdot \prod_{i=0}^k  \|\mu_i\|_{\cH^{-\epsilon}(\TT)}^{c}.
    \]
    Also note that, for $j \in \NN \bigcup \{0\}$, we have 
    \[
    \|\Pi_jf\|^2_{\cH^{-\sigma}(\TT)}
    \sim 2^{-j \sigma} \cdot \|\Pi_jf\|^2_{\cL^2(\TT)}.
    \]
    If $c=1$, by Proposition \ref{Prop_Sobolev}, we have
    \[
    \left|\left(\sum_{j_0=0}^\infty \cdots \sum_{j_k=0}^\infty
    \Lambda_{\mathcal{P};N}(\Pi_{j_0}\mu_0,...,\Pi_{j_k}\mu_k)\right)-\Lambda_{\mathcal{P};N}(\Pi_{0}\mu_0,...,\Pi_{0}\mu_k) \right| 
    \]
    \[
    \lesssim_{\mathcal{P}}
        N^{-C} \cdot \sum_{j_0=0}^\infty \cdots \sum_{j_k=0}^\infty
        \left(  \prod_{i=0}^k \|\Pi_{j_i}\mu_i\|_{\cH^{-\sigma}(\TT)} \right)
    \]
    \[
    = N^{-C} \cdot \prod_{i=0}^k \left( \sum_{j_i=0}^\infty  \|\Pi_{j_i}\mu_i\|_{\cH^{-\sigma}(\TT)}\right)
    \]
    \[
    \sim N^{-C} \cdot \prod_{i=0}^k \left( \sum_{j_i=0}^\infty  2^{-j_i \sigma/4} \cdot \|\Pi_{j_i}\mu_i\|_{\cH^{-\sigma/2}(\TT)}\right)
    \]
    \[
    \lesssim_{\mathcal{P}}  N^{-C} \cdot \prod_{i=0}^k \|\mu_i\|_{\cH^{-\sigma/2}(\TT)},
    \]
    where we use the Cauchy-Schwarz inequality in the last step. Then, we choose $\epsilon=\frac{\sigma}{2}$ in this case.\\
If $c \in (0,1)$, by Proposition \ref{Prop_Sobolev} and Lemma \ref{Upperbound}, we have
    \[
    \left|\Lambda_{\mathcal{P};N}(\Pi_{j_0}\mu_0,...,\Pi_{j_k}\mu_k) \right|
    \]
    \[
    \lesssim_{\mathcal{P}}
        N^{-C} \cdot \prod_{i=0}^k \left( \|\Pi_{j_i}\mu_i\|^{1-c}_{\cL^{\infty}(\TT)}  \|\Pi_{j_i}\mu_i\|^{c}_{\cH^{-\sigma}(\TT)} \right) 
    \]
    \[
    \lesssim_{\tau} 
    N^{-C} \cdot 2^{[(1-c)(1-s+\tau)-c\sigma/2](\sum_{i=0}^k j_i)} \cdot \prod_{i=0}^k  \|\Pi_{j_i}\mu_i\|^c_{\cL^2(\TT)}  ,
    \]
    for any small $\tau>0$, and any $(j_0,\cdots,j_k) \in \left( \NN \cup \{0\} \right)^{k+1}-\{(0,\cdots,0)\}$. Let $\epsilon<\frac{c\sigma}{9(1-c)}$, $s \in (1-\epsilon,1]$, and $\tau= \frac{c\sigma}{8(1-c)}$, so that 
    \[
    (1-c)(1-s+\tau)-\frac{c\sigma}{2}< -\frac{c\sigma}{4}
    \Leftrightarrow 1-s< \frac{c\sigma}{8(1-c)}.
    \]
    To conclude, by our choice of the parameters, we have shown that
    \[
    \left|\Lambda_{\mathcal{P};N}(\Pi_{j_0}\mu_0,...,\Pi_{j_k}\mu_k) \right| \lesssim_{\mathcal{P}}
    N^{-C} \cdot 2^{(-c\sigma/4)(\sum_{i=0}^k j_i)}\cdot \prod_{i=0}^k  \|\Pi_{j_i}\mu_i\|^c_{\cL^2(\TT)}.
    \]
    Therefore, we have
    \[
    \left|\left(\sum_{j_0=0}^\infty \cdots \sum_{j_k=0}^\infty
    \Lambda_{\mathcal{P};N}(\Pi_{j_0}\mu_0,...,\Pi_{j_k}\mu_k)\right)-\Lambda_{\mathcal{P};N}(\Pi_{0}\mu_0,...,\Pi_{0}\mu_k) \right| 
    \]
    \[
    \lesssim_{\mathcal{P}} N^{-C} \cdot
    \sum_{j_0=0}^\infty \cdots \sum_{j_k=0}^\infty  2^{(-c\sigma/4)(\sum_{i=0}^k j_i)}\cdot \prod_{i=0}^k \ \|\Pi_{j_i}\mu_i\|^c_{\cL^2(\TT)} 
    \]
    \[
    = N^{-C} \cdot \prod_{i=0}^k \left( \sum_{j_i=0}^\infty  2^{-j_ic \sigma/4} \cdot \|\Pi_{j_i}\mu_i\|^c_{\cL^{2}(\TT)}\right)
    \]
    \[
    \sim N^{-C} \cdot \prod_{i=0}^k \left( \sum_{j_i=0}^\infty  2^{-j_ic \sigma/8} \cdot \|\Pi_{j_i}\mu_i\|^c_{\cH^{-\sigma/4}(\TT)}\right)
    \]
    \[
    \lesssim_{\mathcal{P}} N^{-C} \cdot \prod_{i=0}^k  \|\mu_i\|_{\cH^{-\sigma/4}(\TT)}^{c},
    \]
     where we use Hölder's inequality in the last step. Finally, we choose $\epsilon=\min\{\frac{c\sigma}{9(1-c)},\frac{\sigma}{4}\}$ in this case, and the proof is complete.
\end{proof}

\subsection{Counting operators with a smooth cutoff}
In this subsection, we want to obtain a similar consequence as in Theorem \ref{Thm_Counting} when the $y$-variable is contained in an interval away from the origin. The main reason is that we want to find a nontrivial polynomial progression corresponding to the condition that $y \neq 0$.
\\Let $\chi$ be any smooth cutoff function supported on $[0,1]$, and $\chi_N(y):=\frac{1}{N} \chi(\frac{y}{N})$. We define the counting operators with a smooth cutoff as follows
\[
\Lambda_{\mathcal{P};N}(\chi; f_0,...,f_{k}):=
    \frac{1}{N} \int_0^N \int_{\TT}f_0(x) \prod_{i=1}^k f_i( x+ P_i(y)) \chi_N(y) \, dx \,dy.
\]

Now we state a version of the Sobolev inequality with a smooth cutoff.
\begin{proposition} \label{Prop_Sobolev_cutoff}
Let $k \in \NN$, $N \geq 1$, and $f_0, \cdots, f_k$ be 1-bounded. Let $\mathcal{P}= \{P_1, \cdots, P_{k}\in \RR[y]\}$ be a collection of polynomials with distinct degrees and zero constant terms. Then there exist $\sigma=\sigma(\mathcal{P})>0$ and $C=C(\mathcal{P})$,  $c=c(\mathcal{P})>0$ such that
        \[
        \left|\Lambda_{\mathcal{P};N}(\chi;f_0,...,f_k) - \int_0^1\chi \cdot \prod_{i=0}^k \int_{\TT} f_i \right| \lesssim_{\mathcal{P}, \chi}
        N^{-C} \cdot \min_{i=0,\cdots, k}    \|f_i\|^{c}_{\cH^{-\sigma}(\TT)}.
        \]
\end{proposition}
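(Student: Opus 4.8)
The plan is to deduce Proposition~\ref{Prop_Sobolev_cutoff} from Theorem~\ref{Thm_Sobolev} by writing the smoothly cut‑off average as a superposition of the sharp averages $\Lambda_{\mathcal{P};t}(f_0,\ldots,f_k)$ over a range of scales $t$ and then inserting the asymptotic formula at each scale. First I would unravel the definition: after the change of variables $y=Nu$ one has $\Lambda_{\mathcal{P};N}(\chi;f_0,\ldots,f_k)=\int_0^1\chi(u)\,F(Nu)\,du$, where $F(s):=\int_\TT f_0(x)\prod_{i=1}^k f_i(x+P_i(s))\,dx$. Since $\chi$ is smooth and supported in $[c_0,1]$ for some $c_0\in(0,1)$ — which is the relevant case, the whole point of $\chi$ being to confine $y$ to an interval bounded away from the origin — we may write $\chi(u)=-\int_u^1\chi'(a)\,da$, and then Fubini together with $\int_0^a F(Nu)\,du=\tfrac1N\int_0^{aN}F(s)\,ds=a\,\Lambda_{\mathcal{P};aN}(f_0,\ldots,f_k)$ yields the layer‑cake identity
\[
\Lambda_{\mathcal{P};N}(\chi;f_0,\ldots,f_k)=-\int_0^1\chi'(a)\,a\,\Lambda_{\mathcal{P};aN}(f_0,\ldots,f_k)\,da .
\]

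Next, since $\chi'$ is also supported in $[c_0,1]$, for $N\ge 1/c_0$ every scale $aN$ occurring in this integral satisfies $aN\ge c_0N\ge 1$, so Theorem~\ref{Thm_Sobolev} applies and gives $\Lambda_{\mathcal{P};aN}(f_0,\ldots,f_k)=\prod_{i=0}^k\int_\TT f_i+O_{\mathcal{P}}\!\big((aN)^{-C}\min_i\|f_i\|_{\cH^{-\sigma}(\TT)}^{c}\big)$. Substituting this in and integrating by parts, the main term becomes $\big(-\int_0^1\chi'(a)\,a\,da\big)\prod_i\int_\TT f_i=\big(\int_0^1\chi\big)\prod_i\int_\TT f_i$, while the error is bounded by $N^{-C}\min_i\|f_i\|_{\cH^{-\sigma}}^{c}\int_0^1|\chi'(a)|\,a^{1-C}\,da\lesssim_{\mathcal{P},\chi}N^{-C}\min_i\|f_i\|_{\cH^{-\sigma}}^{c}$, the $a$‑integral being harmless because $a\ge c_0>0$ on $\text{spt}(\chi')$. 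This proves the Proposition for all $N\ge 1/c_0$, with the same $\sigma$ as in Theorem~\ref{Thm_Sobolev}.

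The step I expect to need the most care is the residual bounded range $1\le N<1/c_0$: there the scales $aN$ need not reach the regime $t\ge1$ in which Theorem~\ref{Thm_Sobolev} is stated, and — tellingly — these small scales correspond precisely to the $y\approx0$ region where the trivial configurations live. I would handle this by rescaling: writing $\Lambda_{\mathcal{P};t}(f_0,\ldots,f_k)=\Lambda_{t\cdot\mathcal{P};1}(f_0,\ldots,f_k)$ with $t\cdot\mathcal{P}:=\{P_i(t\,\cdot\,)\}_{i=1}^{k}$ (the $P_i$ rescaled by $t$), the collections $t\cdot\mathcal{P}$ with $t$ in the compact interval $[c_0,\;c_1/c_0]$, where $[c_0,c_1]\supseteq\text{spt}(\chi')$, form a compact family of polynomial collections with distinct degrees and uniformly nondegenerate coefficients; since the constants produced in the proof of Theorem~\ref{Thm_Sobolev} depend on $\mathcal{P}$ only through the degrees and upper/lower bounds on the relevant coefficients, the asymptotic of the previous paragraph holds uniformly for $t$ in that range, and feeding it back into the layer‑cake identity finishes the proof. (If one only needs the statement for $N$ large — which is all that is used downstream, where $N\to\infty$ — this last step is unnecessary: simply take, e.g., $\text{spt}(\chi)\subseteq[1/2,1]$ and $N\ge2$.) Apart from this uniformity/rescaling point, the whole argument is a mechanical consequence of Theorem~\ref{Thm_Sobolev}.
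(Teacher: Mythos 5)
Your argument is correct (modulo the small-$N$ technicality you yourself flag) but takes a genuinely different route from the paper. You reduce the cut-off average directly to the sharp one via the layer-cake identity
\[
\Lambda_{\mathcal{P};N}(\chi;f_0,\ldots,f_k)=-\int_0^1\chi'(a)\,a\,\Lambda_{\mathcal{P};aN}(f_0,\ldots,f_k)\,da,
\]
valid because $\chi$ is smooth and compactly supported in $[0,1]$ (so $\chi(1)=0$), and then you simply integrate the Theorem~\ref{Thm_Sobolev} asymptotic over the compact range of scales $aN\in[c_0N,N]$. This treats Theorem~\ref{Thm_Sobolev} purely as a black box and needs nothing further about the internal mechanism. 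By contrast, the paper re-runs the degree-lowering scheme for the cut-off operator: it first obtains a $U^2$ control \eqref{special U^2 control with smooth cutoff} by applying Cauchy--Schwarz in $y$ (which removes the cutoff) and invoking Theorem~\ref{Thm_Sobolev}, and then repeats the dualization and $U^2$-inversion steps of Section~\ref{Proof of Theorem Thm_Sobolev} with the cutoff in place, substituting the cut-off oscillatory integral estimate \eqref{Oscillatory integral_ cutoff} for Lemma~\ref{Lma_Oscillatory integral}. Your route is shorter and cleaner in the regime that matters (indeed the downstream applications only need $N$ large, and you note that taking $\mathrm{spt}(\chi)\subset[c_0,1]$ with $N\geq 1/c_0$ completely dispenses with the residual range); the paper's route is more robust, in particular avoiding the uniformity-in-$\mathcal{P}$ question that your rescaling argument for $1\leq N<1/c_0$ raises but does not verify. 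That uniformity claim — that the constants of Theorem~\ref{Thm_Sobolev} are stable under dilating $\mathcal{P}$ by $t\in[c_0,c_1/c_0]$ — is plausible but is not something the paper states, so if one needed the Proposition at small $N$ this step would require an actual check of the constants through PET and degree-lowering. (As a side remark, with the paper's literal definitions $\Lambda_{\mathcal{P};N}(\chi;\cdots)=\frac{1}{N}\int_0^N\cdots\chi_N(y)\,dy$ and $\chi_N=\frac1N\chi(\cdot/N)$ there is a spurious extra $1/N$; your reading $\Lambda_{\mathcal{P};N}(\chi;\cdots)=\int_0^1\chi(u)F(Nu)\,du$ is the one consistent with the stated main term $\int_0^1\chi\cdot\prod_i\int_\TT f_i$.)
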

\begin{proof}[Proof of Proposition \ref{Prop_Sobolev_cutoff}]
    The proof is quite similar to the proof of Theorem \ref{Thm_Sobolev}. However, we can apply Theorem \ref{Thm_Sobolev} as a black box instead of repeating all the arguments. Therefore, we will only sketch the proof idea, and refer the readers to the details of those missing steps in the proof of Theorem \ref{Thm_Sobolev}.\\ 
    The key idea is that we can obtain a $U^2$-norm control without using PET induction. A similar idea has appeared in the proof of Lemma \ref{Lma_Intermediate induction step}. We claim that
    \begin{equation} \label{special U^2 control with smooth cutoff}
        \left|\Lambda_{\mathcal{P};N}(\chi;f_0,...,f_{k}) \right|^{O_{\mathcal{P}}(1)} \lesssim_{\mathcal{P},\chi}
    \min_{\substack{j=0,\cdots, k} }\|f_j\|_{U^2(\TT)}.
    \end{equation}
    The proof is almost identical to that of \eqref{Special_U^2 control}. The only difference is that we apply Theorem \ref{Thm_Sobolev} instead of the induction hypothesis used in the proof of \eqref{Special_U^2 control}. Next, we define the i-th dual function with a smooth cutoff as follows:
\[
    F^i_{\chi}(x)=F^i_{\mathcal{P};N;\chi}(x):= \frac{1}{N} \int_0^N \prod_{\substack{j=0,\cdots, k \\ j \neq i}} f_j(x+P_j(y)-P_i(y)) \chi_N(y)\,dy,
\]
where we denote $P_0(y)=0$.
Then we have 
\begin{equation} \label{pairing}
    \Lambda_{\mathcal{P};N}(\chi;f_0,...,f_k)= \int_{\TT} f_i(x)F^i_{\chi}(x) \, dx.
\end{equation}
Apply the Cauchy-Schwarz inequality to \eqref{pairing} as in the Lemma \ref{Lma_Dual control}, and use \eqref{special U^2 control with smooth cutoff}, we conclude that
\begin{equation} \label{special U^2 control with smooth cutoff for dual}
        \left|\Lambda_{\mathcal{P};N}(\chi;f_0,...,f_{k}) \right|^{O_{\mathcal{P}}(1)} \lesssim_{\mathcal{P},\chi}
    \min_{\substack{j=0,\cdots, k} }\|F^j_{\chi}\|_{U^2(\TT)}.
    \end{equation}
    Finally, we can repeat the same argument as in Section \ref{Proof of Theorem Thm_Sobolev} to deduce the conclusion of Proposition \ref{Prop_Sobolev_cutoff} from \eqref{special U^2 control with smooth cutoff for dual}. The only additional ingredient is the following variant of the oscillatory integral estimate (see \cite[Proposition 2.1]{SW01}, for example)
    \begin{equation} \label{Oscillatory integral_ cutoff}
        \left|\int_{0}^1  e(P(y)) \chi(y)\, dy\right| \lesssim_{d,\chi} (\max_i \, |a_i|)^{-1/d},
    \end{equation}
    where $P(y)= \sum_{i=1}^d a_i y^i \in \RR[y]$. We replace the use of Lemma \ref{Lma_Oscillatory integral} by \eqref{Oscillatory integral_ cutoff} in Section \ref{Proof of Theorem Thm_Sobolev} (also, the corresponding intermediate induction step in Section \ref{Intermediate induction step}) in the last step, and the proof is complete.
\end{proof}
Finally, we can follow the same argument as in Section \ref{Counting operators for the Frostman measures} to upgrade from Proposition \ref{Prop_Sobolev_cutoff} to Proposition \ref{prop_counting_cutoff}, which establishes the Sobolev inequality for the Frostman measures with a smooth cutoff. 

\begin{proposition} \label{prop_counting_cutoff}
     Let $k \in \NN$, $N \geq 1$, and let $\mathcal{P}= \{P_1, \cdots, P_{k}\in \RR[y]\}$ be a collection of polynomials with distinct degrees and zero constant terms.
    Then there exist $\epsilon=\epsilon(\mathcal{P})>0$ and $C=C(\mathcal{P})>0$, $c=c(\mathcal{P})>0$ such that
        \[
        \Lambda_{\mathcal{P};N}(\chi;\mu_0,...,\mu_k) = \int_0^1\chi \cdot\prod_{i=0}^k \int_{\TT} \mu_i
        +O_{\mathcal{P},\chi}\left(N^{-C} \cdot \prod_{i=0}^k  \|\mu_i\|_{\cH^{-\epsilon}(\TT)}^{c} \right) ,
        \]
        for any s-Frostman measures $\mu_i$ with $s \in (1-\epsilon,1]$ and $\text{spt}(\mu_i) \subset \TT$, where $i\in \{0,\cdots,k\}$.
\end{proposition}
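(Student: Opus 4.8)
The plan is to repeat the argument of Section~\ref{Counting operators for the Frostman measures} that upgraded Proposition~\ref{Prop_Sobolev} to Theorem~\ref{Thm_Counting}, now with Proposition~\ref{Prop_Sobolev_cutoff} in place of Proposition~\ref{Prop_Sobolev}. First I would record the interpolated form of Proposition~\ref{Prop_Sobolev_cutoff}: using $\|f\|_{\cH^{-\sigma}(\TT)}\leq\|f\|_{\cL^{\infty}(\TT)}$ we may assume the exponent $c$ lies in $(0,1]$, and then, by tracking the $\cL^{\infty}$-norms through the proof of Proposition~\ref{Prop_Sobolev_cutoff} exactly as in Proposition~\ref{Prop_Sobolev}, one obtains
\[
\left|\Lambda_{\mathcal{P};N}(\chi;f_0,\dots,f_k)-\int_0^1\chi\cdot\prod_{i=0}^k\int_{\TT}f_i\right|\lesssim_{\mathcal{P},\chi} N^{-C}\prod_{i=0}^k\left(\|f_i\|^{1-c}_{\cL^{\infty}(\TT)}\,\|f_i\|^{c}_{\cH^{-\sigma}(\TT)}\right)
\]
for all $f_0,\dots,f_k\in\cL^{\infty}(\TT)$.

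Next I would Littlewood--Paley decompose each $\mu_i=\sum_{j_i\geq0}\Pi_{j_i}\mu_i$ and expand $\Lambda_{\mathcal{P};N}(\chi;\mu_0,\dots,\mu_k)$ multilinearly. The term with all indices zero equals $\int_0^1\chi\cdot\prod_{i=0}^k\int_{\TT}\mu_i$, since $\Pi_0\mu_i=\int_{\TT}\mu_i$; every other term contains a factor with $\int_{\TT}\Pi_{j}\mu=0$, so it remains to bound $\sum_{(j_0,\dots,j_k)\neq(0,\dots,0)}\left|\Lambda_{\mathcal{P};N}(\chi;\Pi_{j_0}\mu_0,\dots,\Pi_{j_k}\mu_k)\right|$. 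On each such term I would apply the interpolated inequality above together with Lemma~\ref{Upperbound} (with a small auxiliary $\tau>0$) to estimate $\|\Pi_{j_i}\mu_i\|_{\cL^{\infty}(\TT)}\lesssim_\tau 2^{j_i(1-s+\tau)}$, and the identity $\|\Pi_{j}f\|^2_{\cH^{-\sigma}(\TT)}\sim 2^{-j\sigma}\|\Pi_{j}f\|^2_{\cL^2(\TT)}$, arriving at a bound $N^{-C}2^{[(1-c)(1-s+\tau)-c\sigma/2](\sum_i j_i)}\prod_i\|\Pi_{j_i}\mu_i\|^c_{\cL^2(\TT)}$. Choosing $\tau$ and $\epsilon$ exactly as in the proof of Theorem~\ref{Thm_Counting} (so that $s>1-\epsilon$ forces the bracketed exponent below $-c\sigma/4$; when $c=1$ one uses Cauchy--Schwarz directly instead), summing the resulting geometric series over all $(j_0,\dots,j_k)$ and applying H\"older's inequality yields $N^{-C}\prod_{i=0}^k\|\mu_i\|^c_{\cH^{-\sigma/4}(\TT)}$, which is the claim with $\epsilon=\min\{\tfrac{c\sigma}{9(1-c)},\tfrac{\sigma}{4}\}$ (or $\epsilon=\sigma/2$ if $c=1$).

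Since Proposition~\ref{Prop_Sobolev_cutoff} supplies all the new harmonic analysis and Lemma~\ref{Upperbound} together with Section~\ref{Counting operators for the Frostman measures} supply the rest, I do not expect a genuine obstacle here; the only care needed is bookkeeping. Concretely, one must check that the constants produced by Proposition~\ref{Prop_Sobolev_cutoff} depend only on $\mathcal{P}$ and $\chi$ (and not on $N$ or on the measures $\mu_i$), so that the geometric summation and its absorption into $\lesssim_{\mathcal{P},\chi}N^{-C}$ are legitimate, and that the thresholds for $\epsilon$, $\tau$, and $s$ can be fixed uniformly in $N$. Everything else is a line-by-line transcription of the proof of Theorem~\ref{Thm_Counting}.
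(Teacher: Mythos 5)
Your proposal is correct and is exactly the argument the paper intends: the paper dispatches this proposition with the single remark that one "follows the same argument as in Section~\ref{Counting operators for the Frostman measures}" to upgrade Proposition~\ref{Prop_Sobolev_cutoff} to Proposition~\ref{prop_counting_cutoff}, and your step-by-step transcription (interpolated form of the cutoff Sobolev inequality, Littlewood--Paley decomposition, Lemma~\ref{Upperbound}, geometric summation with the same choice of $\epsilon$ and $\tau$, Cauchy--Schwarz when $c=1$ and H\"older otherwise) is precisely that argument carried out.
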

\begin{remark} \label{rmk_counting_cutoff}
    The conclusion of \ref{prop_counting_cutoff} can apply to a broader class of measures than Frostman measures. If we look back at the proof, we only use the Frostman condition when applying Lemma \ref{Upperbound}. Therefore, the conclusion still holds if we allow the measure $\mu$ to be a complex measure, and its absolute value $|\mu|$ still satisfies the Frostman condition.
\end{remark}

\subsection{Finding the nontrivial progressions}
This subsection aims to prove Theorem \ref{Thm_Progression}, following \cite[Section 6]{FGP22} for the most part.
Recall the Fejér kernel
\[
K_M(x):= \sum_{|m| \leq M} \left(  1-\frac{|m|}{M+1} \right) e^{2\pi i mx}= \frac{1}{M+1} \left( \frac{\text{sin}((2M+1)\pi x)}{\text{sin}(\pi x)} \right)^2,
\]
where $M \in \NN$. Also, note that $K_M \geq 0$ and $\int_{\TT} K_M(x) \, dx =1$.\\
We start with constructing a nonnegative measure $\nu$ supported on the set of polynomial progressions.
\begin{proposition} \label{measure for the progression}
    Let $k \in \NN$, $N \geq 1$, and let $\mathcal{P}= \{P_1, \cdots, P_{k}\in \RR[y]\}$ be a collection of polynomials with distinct degrees and zero constant terms.
    Then there exist $\epsilon=\epsilon(\mathcal{P})>0$ such that for any s-Frostman measures $\mu$ with $s \in (1-\epsilon,1]$ and $\text{spt}(\mu) \subset \TT$, the following linear functional $\nu(\mu)=\nu_{\mathcal{P};N;\chi}(\mu)$ is well-defined
    \begin{equation} \label{nu}
        \langle \nu(\mu),g \rangle :=
        \lim_{M \to \infty} \frac{1}{N}\int_{0}^N \int_{\TT} g(x,y) (K_M*\mu)(x) \prod_{i=1}^k (K_M*\mu)( x+ P_i(y)) \chi_N(y) \, dx \,dy,
    \end{equation}
    for every continuous function $g:\TT \times [0,N] \to \RR$. Moreover, we have 
    \begin{equation} \label{nu_infty}
        |\langle \nu(\mu),g \rangle| \lesssim_{\mathcal{P},N,\chi,\mu}
    \|g\|_{\cL^{\infty}(\TT \times [0,N])}.
    \end{equation}
\end{proposition}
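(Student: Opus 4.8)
The plan is to show that, for every $g\in C(\TT\times[0,N])$, the pre-limit quantities
\[
a_M(g):=\frac{1}{N}\int_0^N\int_{\TT} g(x,y)\,(K_M*\mu)(x)\prod_{i=1}^k (K_M*\mu)(x+P_i(y))\,\chi_N(y)\,dx\,dy
\]
converge as $M\to\infty$, and then to deduce \eqref{nu_infty} from positivity. The first step is the uniform bound $\sup_M|a_M(g)|\lesssim_{\mathcal{P},N,\chi,\mu}\|g\|_{\cL^{\infty}}$. Since $\chi\geq 0$, $K_M\geq 0$ and $\mu\geq 0$, we have $a_M(g)\geq 0$ whenever $g\geq 0$, so $|a_M(g)|\leq\|g\|_{\cL^{\infty}}\,a_M(1)$ with $a_M(1)=\Lambda_{\mathcal{P};N}(\chi;K_M*\mu,\dots,K_M*\mu)$. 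The proof of Proposition \ref{prop_counting_cutoff} applies with $K_M*\mu$ in place of $\mu$, because the only property of $\mu$ it uses is the Littlewood--Paley bound of Lemma \ref{Upperbound}, and
\[
\|\Pi_j(K_M*\mu)\|_{\cL^{\infty}}=\|K_M*\Pi_j\mu\|_{\cL^{\infty}}\leq\|K_M\|_{\cL^{1}}\|\Pi_j\mu\|_{\cL^{\infty}}\lesssim_\tau 2^{j(1-s+\tau)},\qquad \|\Pi_j(K_M*\mu)\|_{\cH^{-\sigma}}\leq\|\Pi_j\mu\|_{\cH^{-\sigma}}
\]
hold with constants independent of $M$; hence $\sup_M a_M(1)\lesssim_{\mathcal{P},N,\chi}1+\|\mu\|_{\cH^{-\epsilon}}^{(k+1)c}<\infty$ by Proposition \ref{Norm Equivalence}.

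For convergence, Stone--Weierstrass shows that finite sums $\sum_j e(m_jx)\psi_j(y)$ with $m_j\in\ZZ$ and $\psi_j\in C([0,N])$ are dense in $C(\TT\times[0,N])$, so by the uniform bound and a $3\eta$-argument it suffices to prove that $a_M(g)$ converges when $g(x,y)=e(mx)\psi(y)$; approximating $\psi$ uniformly by smooth functions and using the uniform bound again reduces to $\psi$ smooth. Absorbing $\psi$ into the cutoff, $a_M(g)=\Lambda_{\mathcal{P};N}(\Phi;e(m\cdot)(K_M*\mu),K_M*\mu,\dots,K_M*\mu)$ with $\Phi(t)=\psi(Nt)\chi(t)$ smooth and supported in $[0,1]$.

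To see $(a_M(g))_M$ is Cauchy, I would write $K_M*\mu-K_{M'}*\mu=\sum_{j\geq 0}(K_M-K_{M'})*\Pi_j\mu$ (a finite sum for fixed $M,M'$, since $K_M*\mu$ is a trigonometric polynomial), telescope the difference of the two $(k+1)$-fold products one slot at a time, Littlewood--Paley decompose every factor, and apply Proposition \ref{Prop_Sobolev_cutoff} after normalizing each dyadic piece by its $\cL^{\infty}$ norm (bounded by $\|\Pi_j\mu\|_{\cL^{\infty}}$ uniformly, while $\|e(m\cdot)h\|_{\cH^{-\sigma}}\lesssim_{m,\sigma}\|h\|_{\cH^{-\sigma}}$ absorbs the extra factor in the zeroth slot). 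The estimate tends to $0$ because: (i) $\widehat{K_M-K_{M'}}(0)=0$ and $\Pi_j\mu$ has mean zero for $j\geq 1$, so the main terms produced by Proposition \ref{Prop_Sobolev_cutoff} either vanish identically or, after re-inserting the normalization, equal a constant times $\big[(1-\tfrac{|m|}{M+1})_+-(1-\tfrac{|m|}{M'+1})_+\big]\widehat\mu(-m)$, which tends to $0$; and (ii) for each fixed $j$, $\|(K_M-K_{M'})*\Pi_j\mu\|_{\cH^{-\sigma}}\leq\min\!\big(2^j\big|\tfrac{1}{M+1}-\tfrac{1}{M'+1}\big|,2\big)\,\|\Pi_j\mu\|_{\cH^{-\sigma}}\to 0$, while the geometric factors $2^{-\sigma'(j_0+\dots+j_k)}$ coming from the negative Sobolev savings make the multi-index sum (and the sum over which slot carries the difference) uniformly summable, so dominated convergence permits exchanging the limit and the sum. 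This yields a well-defined functional $\langle\nu(\mu),g\rangle:=\lim_M a_M(g)$, first on the dense subalgebra and then, via the uniform bound, on all of $C(\TT\times[0,N])$, with linearity immediate.

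Finally, \eqref{nu_infty} follows from positivity: passing to the limit in $a_M(g)\geq 0$ (for $g\geq 0$) shows $g\mapsto\langle\nu(\mu),g\rangle$ is a positive linear functional, so applying it to $\|g\|_{\cL^{\infty}}\pm g$ yields $|\langle\nu(\mu),g\rangle|\leq\|g\|_{\cL^{\infty}}\langle\nu(\mu),1\rangle$, and $\langle\nu(\mu),1\rangle=\lim_M a_M(1)$ is finite by the first step (in fact, by Proposition \ref{prop_counting_cutoff} and Remark \ref{rmk_counting_cutoff} one can identify it as $\Lambda_{\mathcal{P};N}(\chi;\mu,\dots,\mu)=\int_0^1\chi\cdot(\int_{\TT}\mu)^{k+1}+O_{\mathcal{P},\chi}(N^{-C}\|\mu\|_{\cH^{-\epsilon}}^{(k+1)c})$). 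The main obstacle is the Cauchy estimate above: reconciling the telescoping in $M$ with the Littlewood--Paley decomposition forced by the $1$-bounded inequality of Proposition \ref{Prop_Sobolev_cutoff}, and checking that each main term either vanishes or decays while the remainder series is summable uniformly enough to interchange limit and sum.
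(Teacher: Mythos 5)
Your proposal is correct and reaches the same conclusion, but it takes a genuinely different route at the two places where the argument has real content, so let me compare.

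\emph{Density reduction.} The paper reduces by Fourier series to test functions $g(x,y)=e(l_1 x+\tfrac{l_2}{N}y)$ and disposes of the $y$-exponential by either folding it into $f_1$ (when $\deg P_1=1$, writing $P_1(y)=cy$) or by adjoining an auxiliary linear polynomial $P_0(y)=\tfrac{l_2}{N}y$ to $\mathcal P$ (when $\deg P_1>1$), which forces a case split. You instead invoke Stone--Weierstrass to reduce to $g(x,y)=e(mx)\psi(y)$ with $\psi$ smooth and absorb $\psi$ into the smooth cutoff, setting $\Phi(t)=\psi(Nt)\chi(t)$. Since Proposition~\ref{Prop_Sobolev_cutoff} and Proposition~\ref{prop_counting_cutoff} are stated for an arbitrary smooth cutoff, this is legitimate, and it eliminates the paper's case split and the need to enlarge $\mathcal P$; this is a cleaner handling of the $y$-dependence.

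\emph{Cauchy estimate.} You telescope the $(k+1)$ slots, then re-derive the needed multilinear bound from scratch: Littlewood--Paley decompose each factor, normalize each dyadic piece in $\cL^\infty$, invoke Proposition~\ref{Prop_Sobolev_cutoff}, and re-sum using the $2^{-\sigma j}$ gain against Lemma~\ref{Upperbound}. This works, but it is a re-proof of Proposition~\ref{prop_counting_cutoff} (and Theorem~\ref{Thm_Counting}) inlined into the argument. The paper's route is shorter: after telescoping, each term is $\Lambda_{\mathcal P;N}(\Phi;\ldots,(K_M-K_{M'})*\mu,\ldots)$, the factor $(K_M-K_{M'})*\mu$ is a signed measure whose total variation is dominated by the Frostman measure $K_M*\mu+K_{M'}*\mu$, so Proposition~\ref{prop_counting_cutoff} together with Remark~\ref{rmk_counting_cutoff} applies directly; the main term vanishes (or converges, for the zeroth slot where you carry $e(m\cdot)$), and the error is $O\bigl(N^{-C}\prod_i\|\mu_i\|_{\cH^{-\epsilon}}^c\bigr)$ with one factor $\|(K_M-K_{M'})*\mu\|_{\cH^{-\epsilon}}^c\to 0$. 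You could have cited the already-established Proposition~\ref{prop_counting_cutoff} in exactly this way and skipped the Littlewood--Paley re-derivation. Similarly, for the uniform bound you re-derive the Littlewood--Paley estimates for $K_M*\mu$, whereas it suffices to note directly that $K_M\ge 0$ with $\int K_M=1$ makes $K_M*\mu$ again an $s$-Frostman measure, so Proposition~\ref{prop_counting_cutoff} applies verbatim.

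\emph{Positivity step.} Your derivation of \eqref{nu_infty} via positivity of the limiting functional is equivalent to the paper's, which bounds the pre-limit quantities uniformly; both are fine. In summary: no gap, but you are duplicating work that Proposition~\ref{prop_counting_cutoff} and Remark~\ref{rmk_counting_cutoff} already package, while your treatment of the $y$-dependence of $g$ is a genuine and welcome simplification over the paper's case analysis.
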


\begin{proof}[Proof of Proposition \ref{measure for the progression}]
    Let $\epsilon$ be given as in Proposition \ref{prop_counting_cutoff}, and $\mu$ be any s-Frostman measure with $s \in (1-\epsilon,1]$. Note that for any $M \in \NN$, $K_M*\mu$ is also an s-Frostman measure. Moreover, we have $\int_{\TT}K_M*\mu=\int_{\TT}\mu$, and $\|K_M*\mu\|_{\cH^{-\epsilon}(\TT)}\leq \|\mu\|_{\cH^{-\epsilon}(\TT)}$. Therefore, by Proposition \ref{prop_counting_cutoff}, we have
    \[
    \left| \frac{1}{N}\int_{0}^N \int_{\TT} g(x,y) (K_M*\mu)(x) \prod_{i=1}^k (K_M*\mu)( x+ P_i(y)) \chi_N(y) \, dx \,dy \right|
    \]
    \[
    \leq \|g\|_{\cL^{\infty}(\TT \times [0,N])} \cdot
    \left( \frac{1}{N}\int_{0}^N \int_{\TT}  (K_M*\mu)(x) \prod_{i=1}^k (K_M*\mu)( x+ P_i(y)) \chi_N(y) \, dx \,dy \right)
    \]
    \[
    \leq \|g\|_{\cL^{\infty}(\TT \times [0,N])} \cdot
    \left( \int_0^1\chi \cdot (\int_{\TT} \mu)^{k+1}
        +O_{\mathcal{P},\chi}\left(N^{-C} \cdot   \|\mu\|_{\cH^{-\epsilon}(\TT)}^{(k+1)c} \right) \right)
    \]
    \[
    \lesssim_{\mathcal{P},N,\chi,\mu}
    \|g\|_{\cL^{\infty}(\TT \times [0,N])},
    \]
    for any $M \in \NN$, and this proves \eqref{nu_infty}.\\
    Next, to show that \eqref{nu} is well-defined for continuous functions $g:\TT \times [0,N] \to \RR$, it suffices to consider the following special form by density argument:
    \[
    g(x,y)=e(l_1x+\frac{l_2}{N}y)
    \]
    where $l_1,l_2 \in \NN$. To be more precise, we approximate any continuous function by a function whose Fourier series consists of a finite number of terms. Without loss of generality, we assume that $1 \leq \text{deg}(P_1) < \cdots < \text{deg}(P_{k})$. If $\text{deg}(P_1)=1$, then $P_1(y)=cy$ for some $c\in\RR -\{0\}$. In this case, we can write
    \[
    e(l_1x+\frac{l_2}{N}y) f(x) f(x+cy)=
    f'(x) f'' (x+cy),
    \]
    where $f'(x):=e((l_1-\frac{l_2}{cN})x)f(x)$ and $f''(x):=e(\frac{l_2}{cN}x)f(x)$. Note that we have
    \[
    K_M*\mu \rightarrow \mu, \, (K_M*\mu)' \rightarrow \mu', \, (K_M*\mu)'' \rightarrow \mu''\quad \text{in}\quad \cH^{-\eps}(\TT).
    \]
    Therefore, \eqref{nu} is well-defined by applying Proposition \ref{prop_counting_cutoff} and Remark \ref{rmk_counting_cutoff} if $\text{deg}(P_1)=1$.\\
    If $\text{deg}(P_1)>1$, we set $P_0(y)=\frac{l_2}{N}y$, if $l_2 \neq 0$, and consider $\mathcal{P}':=\mathcal{P}\cup\{P_0\}$. In this case, we can write
    \[
    e(l_1x+\frac{l_2}{N}y) f(x)=
    f'(x) e(x+\frac{l_2}{N}y),
    \]
    where $f'(x):=e((l_1-1)x)f(x)$. Therefore, \eqref{nu} is well-defined by applying Proposition \ref{prop_counting_cutoff} and Remark \ref{rmk_counting_cutoff} to $\mathcal{P}'$ with the same argument we have explained above.
    Finally, if $l_2=0$, we simply write $f'(x):=e(l_1x)f(x)$, and we can use the same argument to deduce the well-definedness of \eqref{nu}. Therefore, the proof is complete.
\end{proof}

Now, we are ready to prove Theorem \ref{Thm_Progression}.
\begin{proof}[Proof of Theorem \ref{Thm_Progression}]
    Let $\epsilon$ be given as in Proposition \ref{prop_counting_cutoff}, and $E \subset \TT$ with $\dH (E)>1-\epsilon$. By Frostman Lemma, there exists an s-Frostman measure $\mu_E$ with $\text{spt}(\mu_E) \subset E$, where $s=\dH (E)$.
Next, we choose any smooth, nonnegative cutoff function $\chi$ with $\text{spt}(\chi) \subset [1/10,1]$, and $\int_0^1\chi>0$. By Proposition \ref{prop_counting_cutoff}, we can choose $N$ large enough so that 
    \begin{equation} \label{lower bound of measure}
        \Lambda_{\mathcal{P};N}(\chi;\mu_E,...,\mu_E) >\frac{1}{2}\left( \int_0^1\chi \cdot (\int_{\TT} \mu_E)^{k+1} \right)>0.
    \end{equation}
    With the choice of $\mathcal{P},N,\chi$ and $\mu_E$, we consider the linear functional $\nu(\mu_E)=\nu_{\mathcal{P};N;\chi}(\mu_E)$ as defined in \eqref{nu}. Then by Proposition \ref{measure for the progression} and the Riesz representation theorem, $\nu(\mu_E)$ is a nonnegative measure. Finally, it is a routine check that 
    \begin{equation} \label{support of nu}
        \text{spt}(\nu(\mu_E)) \subset \{(x,y)\in\TT\times[\frac{N}{10},N]: x,x+P_1(y),\cdots,x+P_k(y) \in E\}.
    \end{equation}
    Therefore, by \eqref{lower bound of measure} and \eqref{support of nu},  we can find $y\neq 0$ so that $\{x,x+P_1(y), \cdots,x+P_k(y)\} \subset E$, and we are done.
\end{proof}

\section{Proof of Theorem \ref{Thm_Convergence}}
To prove the almost everywhere convergence result, a strong quantitative estimate for the norm convergence, together with a lacunary subsequence trick, is sufficient. A similar strategy has appeared before, as in \cite{CDKR22}. For a more detailed discussion of the lacunary subsequence tricks, we refer readers to \cite[Appendix]{FLW12}, for example.

Before we start the proof, note that for any $f \in \cL^{\infty}(\TT)$ with $f \geq0$, and $f \neq 0$, the measure defined by $\frac{f(x)}{\|f\|_{\cL^{\infty}}} \,dx$ is a 1-Frostman measure.
    We will use the following direct consequence of Theorem \ref{Thm_Counting}.\footnote{For any general 1-bounded function $f$, we may decompose $f$ as $f^+ + f^-$, where $f^+(x) := f(x)1_{\{f(x)\geq0\}}(x)$ and $f^-(x) := f(x)1_{\{f(x)<0\}}(x)$. And then we apply Theorem \ref{Thm_Counting} to $f^+$ and $f^-$ respectively.}
    \begin{proposition} \label{Prop_Counting}
        Let $k \in \NN$, $N \geq1$, and let $f_0, \cdots, f_{k}$ be 1-bounded. Let $\mathcal{P}= \{P_1, \cdots, P_{k}\in \RR[y]\}$ be a collection of polynomials with distinct degrees and zero constant terms.
    Then there exist $\epsilon=\epsilon(\mathcal{P})>0$ and $C=C(\mathcal{P})>0$ such that
        \[
        \left| \Lambda_{\mathcal{P};N}(\mu,f_1,...,f_k) -\int_{\TT} \mu \cdot \prod_{i=1}^k \int_{\TT} f_i \right|
        \lesssim_{\mathcal{P}}N^{-C} ,
        \]
        for any s-Frostman measures $\mu$ with $s \in (1-\epsilon,1]$ and $\text{spt}(\mu) \subset \TT$.
    \end{proposition}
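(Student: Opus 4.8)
The plan is to deduce Proposition \ref{Prop_Counting} directly from Theorem \ref{Thm_Counting} by choosing suitable inputs $\mu_0, \ldots, \mu_k$. The natural idea is to take $\mu_0 = \mu$ (which is already an $s$-Frostman measure for $s \in (1-\epsilon,1]$) and to build, for each $i \in \{1,\ldots,k\}$, a measure $\mu_i$ from the $1$-bounded function $f_i$. The obvious candidate $d\mu_i(x) = f_i(x)\,dx$ is not a positive measure, so I would first reduce to the case where each $f_i$ is nonnegative. As the footnote indicates, given a general $1$-bounded $f$ we write $f = f^+ + f^-$ with $f^+ = f\cdot 1_{\{f\geq 0\}}$ and $f^- = f\cdot 1_{\{f<0\}}$, and expand $\Lambda_{\mathcal{P};N}(\mu, f_1,\ldots,f_k)$ multilinearly into $2^k$ terms, each of which has nonnegative (or nonpositive) $1$-bounded entries in the last $k$ slots; by the triangle inequality it suffices to treat each such term. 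So from now on I would assume $0 \le f_i \le 1$.

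Next, for $0 \le f_i \le 1$ the measure $d\mu_i(x) := f_i(x)\,dx$ satisfies $\mu_i(B(x,r)) \le |B(x,r) \cap \TT| \le 2r \le r^s$ for $r$ not too large (and trivially for large $r$ since $\mu_i(\TT) \le 1$); more precisely one checks $\mu_i(B(x,r)) \le \min(2r,1) \le r^s$ holds for all $s \in (1-\epsilon,1]$ provided $\epsilon$ is small, so $\mu_i$ is an $s$-Frostman measure supported on $\TT$. Moreover $\|\mu_i\|_{\cH^{-\epsilon}(\TT)} \le \|\mu_i\|_{\cL^2(\TT)} = \|f_i\|_{\cL^2(\TT)} \le 1$ and likewise $\|\mu_0\|_{\cH^{-\epsilon}(\TT)} = \|\mu\|_{\cH^{-\epsilon}(\TT)} \lesssim_{s,\epsilon} 1$ by Proposition \ref{Norm Equivalence} (since $\mu$ is $s$-Frostman with compact support in $\TT$). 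Applying Theorem \ref{Thm_Counting} with these inputs gives
\[
\Lambda_{\mathcal{P};N}(\mu, f_1,\ldots,f_k) = \int_{\TT}\mu \cdot \prod_{i=1}^k \int_{\TT} f_i + O_{\mathcal{P}}\!\left(N^{-C}\prod_{i=0}^k \|\mu_i\|_{\cH^{-\epsilon}(\TT)}^{c}\right),
\]
and since every factor in the product is $\lesssim_{\mathcal{P}} 1$ by the bounds just noted, the error term is $O_{\mathcal{P}}(N^{-C})$, which is exactly the claimed estimate for nonnegative $f_i$. Undoing the reduction (summing the $2^k$ pieces, with the constant absorbing the factor $2^k = O_{\mathcal{P}}(1)$) completes the proof.

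The only mildly delicate points are bookkeeping rather than substance: one must check that the Frostman constant and the $\cH^{-\epsilon}$ bound for $d\mu_i = f_i\,dx$ are uniform and that they (together with Proposition \ref{Norm Equivalence} for $\mu_0$) make the product of the $k+1$ Sobolev norms bounded by a $\mathcal{P}$-dependent constant, so that the $\prod_{i} \|\mu_i\|_{\cH^{-\epsilon}}^c$ factor can simply be dropped into the implied constant. I do not anticipate any real obstacle here; the statement is essentially a specialization of Theorem \ref{Thm_Counting}, and the main content — the Sobolev smoothing machinery and the Littlewood--Paley argument upgrading it to Frostman measures — has already been carried out.
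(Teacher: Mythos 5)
Your proposal is correct and follows the paper's intended argument exactly: the paper's own justification (the paragraph preceding Proposition \ref{Prop_Counting} together with the footnote) is precisely to split each $f_i$ into its nonnegative and nonpositive parts, note that a nonnegative $1$-bounded density defines a Frostman measure after trivial normalization, and then invoke Theorem \ref{Thm_Counting}, with Proposition \ref{Norm Equivalence} controlling $\|\mu\|_{\cH^{-\epsilon}(\TT)}$. The only bookkeeping point you flag — uniformity of that bound over $s \in (1-\epsilon,1]$ — is resolved in the standard way by shrinking $\epsilon$ slightly relative to the $\epsilon$ of Theorem \ref{Thm_Counting}, and poses no real obstacle.
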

Now, we first state our strong quantitative estimates for the norm convergence.
\begin{proposition} \label{Prop_norm convergence}
    Let $k \in \NN$, $N \geq1$, and let $f_0, \cdots, f_{k}$ be 1-bounded. Let $\mathcal{P}= \{P_1, \cdots, P_{k}\in \RR[y]\}$ be a collection of polynomials with distinct degrees and zero constant terms.
    Then there exist $\epsilon=\epsilon(\mathcal{P})>0$ and $C=C(\mathcal{P})>0$ such that
    \[
    \|\mathcal{A}_{\mathcal{P};N}(f_1,...,f_k)-\prod_{i=1}^k \int_{\TT}f_i\|_{\cL^1(\mu)} \lesssim_{\mathcal{P}} N^{-C},
    \]
    for any s-Frostman measures $\mu$ with $s \in (1-\epsilon,1]$ and $\text{spt}(\mu) \subset \TT$.
\end{proposition}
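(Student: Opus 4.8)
The idea is to linearize the absolute value and reduce the $\cL^1(\mu)$ estimate directly to the counting asymptotic of Proposition \ref{Prop_Counting}, at the cost of replacing the positive Frostman measure $\mu$ by a complex measure dominated by $\mu$ (in the spirit of Remark \ref{rmk_counting_cutoff}). Set
\[
g_N(x):=\mathcal{A}_{\mathcal{P};N}(f_1,\cdots,f_k)(x)-\prod_{i=1}^{k}\int_{\TT}f_i ,
\]
which is a bounded measurable function of $x$ by Fubini, and note that the claimed bound is exactly $\int_{\TT}|g_N|\,d\mu\lesssim_{\mathcal{P}}N^{-C}$.

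\textbf{Main steps.} First I would introduce the measurable unit-modulus weight $\sigma(x):=\overline{g_N(x)}/|g_N(x)|$ where $g_N(x)\neq 0$ and $\sigma(x):=0$ otherwise, so that $|g_N(x)|=\sigma(x)g_N(x)$ and $|\sigma|\leq 1$. Define the complex measure $d\mu'(x):=\sigma(x)\,d\mu(x)$; its total variation satisfies $|\mu'|\leq\mu$, hence $|\mu'|(B(x,r))\leq r^{s}$ for all $x,r$, and writing the Jordan decomposition $\mu'=(\mu'_1-\mu'_2)+i(\mu'_3-\mu'_4)$ each $\mu'_j$ is a nonnegative $s$-Frostman measure with $\mathrm{spt}(\mu'_j)\subset\TT$ and $\mu'_j(\TT)\leq\mu(\TT)\leq 1$. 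Next, since $\Lambda_{\mathcal{P};N}$ is linear in its first slot and, for a measure-valued first argument, equals $\int_{\TT}\mathcal{A}_{\mathcal{P};N}(f_1,\cdots,f_k)(x)\,d\mu'(x)$ (Fubini again), unwinding the definitions gives
\[
\int_{\TT}|g_N|\,d\mu=\int_{\TT}g_N\,d\mu'=\Lambda_{\mathcal{P};N}(\mu',f_1,\cdots,f_k)-\mu'(\TT)\prod_{i=1}^{k}\int_{\TT}f_i .
\]
Finally I would apply Proposition \ref{Prop_Counting} to each nonnegative piece $\mu'_j$ and sum, using $\mu'_j(\TT)\leq 1$ and $1$-boundedness of the $f_i$, to conclude that the right-hand side is $O_{\mathcal{P}}(N^{-C})$, which is the proposition.

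\textbf{The main obstacle.} The only point needing care is the extension of the counting asymptotic of Theorem \ref{Thm_Counting}/Proposition \ref{Prop_Counting} from nonnegative Frostman measures to the complex measure $\mu'$ with $|\mu'|$ Frostman. As just indicated this is immediate from the Jordan decomposition plus linearity of $\Lambda_{\mathcal{P};N}$ and of the main term in the first argument; alternatively, one can follow Remark \ref{rmk_counting_cutoff} and observe that positivity of the measure is used in the proof of Theorem \ref{Thm_Counting} only through Lemma \ref{Upperbound}, whose argument survives for a complex measure via $|D_M*\mu'(x)|\leq\int|D_M(y)|\,d|\mu'|(x-y)$. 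I expect this bookkeeping — rather than anything deeper — to be the only real issue; in particular the naive alternative of bounding $\|g_N\|_{\cL^1(\mu)}\leq\mu(\TT)^{1/2}\|g_N\|_{\cL^2(\mu)}$ and expanding the square is worse, since it produces a two-parameter configuration $\{P_1(y),\dots,P_k(y),P_1(y'),\dots,P_k(y')\}$ to which Theorem \ref{Thm_Counting} does not directly apply, whereas the duality argument above uses only the single-parameter estimate already available.
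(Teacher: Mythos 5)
Your proof is correct and follows essentially the same route as the paper: linearize the absolute value with a unit-modulus weight on $\mu$, split the resulting signed/complex measure into nonnegative $s$-Frostman pieces, and apply Proposition~\ref{Prop_Counting} to each piece. The one small difference is that you linearize $\bigl|\mathcal{A}_{\mathcal{P};N}-\prod_i\int_\TT f_i\bigr|$ directly, so the main terms cancel automatically after the counting estimate, whereas the paper first treats the case $\prod_i\int_\TT f_i=0$ with a real sign $S\in\{\pm1\}$ and then reduces the general case to it by expanding each $f_i$ into its mean and mean-zero parts; your version is marginally more economical (no $2^k$-fold expansion) and handles complex-valued $f_i$ without the real-sign assumption the paper tacitly makes.
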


\begin{proof} [Proof of Proposition \ref{Prop_norm convergence}]
Let $\epsilon>0$ and $C>0$ be given in Proposition \ref{Prop_Counting}.
    Note that 
    \[
    \|\mathcal{A}_{\mathcal{P};N}(f_1,...,f_k)\|_{\cL^1(\mu)}
    = \int_{\TT} \mu(x) |\mathcal{A}_{\mathcal{P};N}(f_1,...,f_k)(x)| \, dx
    \]
    \[
    = \int_{\TT} \mu(x) S_{\mathcal{P};N}(f_1,...,f_k)(x) \mathcal{A}_{\mathcal{P};N}(f_1,...,f_k)(x) \, dx,
    \]
    where $S_{\mathcal{P};N}(f_1,...,f_k)(x)$ is defined so that 
    \[
    |\mathcal{A}_{\mathcal{P};N}(f_1,...,f_k)(x)|=S_{\mathcal{P};N}(f_1,...,f_k)(x)\mathcal{A}_{\mathcal{P};N}(f_1,...,f_k)(x).
    \]
    If $\mathcal{A}_{\mathcal{P};N}(f_1,...,f_k)(x)=0$, we define that $S_{\mathcal{P};N}(f_1,...,f_k)(x)=1$.
    From the definition, we have $|S_{\mathcal{P};N}(f_1,...,f_k)(x)|=1$. According to the sign of $S_{\mathcal{P};N}(f_1,...,f_k)(x)$, we define
    \[
    S^{+}_{\mathcal{P};N}(f_1,...,f_k)(x) := S_{\mathcal{P};N}(f_1,...,f_k)(x) 1_{\{S_{\mathcal{P};N}(f_1,...,f_k)(x)>0\}}(x),
    \]
    \[
    S^{-}_{\mathcal{P};N}(f_1,...,f_k)(x) := S_{\mathcal{P};N}(f_1,...,f_k)(x) 1_{\{S_{\mathcal{P};N}(f_1,...,f_k)(x)<0\}}(x),
    \]
    so that we have
    \[
    S_{\mathcal{P};N}(f_1,...,f_k)(x)=
    S^{+}_{\mathcal{P};N}(f_1,...,f_k)(x)+S^{-}_{\mathcal{P};N}(f_1,...,f_k)(x).
    \]
    We first assume that 
    \[
    \prod_{i=1}^k \int_{\TT} f_i=0.
    \]
    By using the easy fact that both $S^{+}_{\mathcal{P};N}\mu$ and $-S^{-}_{\mathcal{P};N}\mu$ are s-Frostman measures, by Proposition \ref{Prop_Counting}, we have
    \[
    \|\mathcal{A}_{\mathcal{P};N}(f_1,...,f_k)\|_{\cL^1(\mu)}
    \]
    \[
    =\Lambda_{\mathcal{P};N}(S^{+}_{\mathcal{P};N}\mu,f_1,...,f_k)+\Lambda_{\mathcal{P};N}(S^{-}_{\mathcal{P};N}\mu,f_1,...,f_k)
    \]
    \[
    \lesssim_{\mathcal{P}} N^{-C}.
    \]
     Finally, if we denote$\int_{\TT}f_i:=f_{i,0}$ and $f_i-\int_{\TT}f_i:=f_{i,1}$, then we have $f_i=f_{i,0}+f_{i,1}$, and $\int_{\TT}f_{i,1}=0$. Therefore,
    \[
    \|\mathcal{A}_{\mathcal{P};N}(f_1,...,f_k)-\prod_{i=1}^k \int_{\TT}f_i\|_{\cL^1(\mu)}
    \]
    \[
    =\|\sum_{\substack{\underline{w}\in\{0,1\}^k\\ \exists i, w_i\neq0 }}\mathcal{A}_{\mathcal{P};N}(f_{1,w_1},...,f_{k,w_k})\|_{\cL^1(\mu)}
    \]
    \[
    \leq \sum_{\substack{\underline{w}\in\{0,1\}^k\\ \exists i, w_i=1 }} \|\mathcal{A}_{\mathcal{P};N}(f_{1,w_1},...,f_{k,w_k})\|_{\cL^1(\mu)}
    \]
    \[
    \lesssim_{\mathcal{P}} N^{-C},
    \]
    and this completes the proof.
\end{proof}
The next step is to upgrade Proposition \ref{Prop_norm convergence}, the norm convergence result, to the almost everywhere convergence result by applying the lacunary subsequence trick.
\begin{proposition}\label{Prop_AEConvergence}
    Let $k \in \NN$ and $\mathcal{P}= \{P_1, \cdots, P_{k}\in \RR[y]\}$ be a collection of polynomials with distinct degrees and zero constant terms. Then there exists $\epsilon=\epsilon(\mathcal{P})>0$ such that given any $f_1, \cdots, f_k \in \cL^{\infty}(\TT)$, and s-Frostman measures $\mu$ with $s \in (1-\epsilon,1]$ and $\text{spt}(\mu) \subset \TT$,
    \[
    \lim_{N \to \infty} \mathcal{A}_{\mathcal{P};N}(f_1,...,f_k)(x)=
    \prod_{i=1}^k \int_{\TT}f_i
    \]
     holds for $\mu$-almost every $x$.
\end{proposition}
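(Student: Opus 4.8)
The plan is to run the standard lacunary-subsequence argument, using Proposition \ref{Prop_norm convergence} as the quantitative input. By normalizing (replacing each $f_i$ by $f_i/\|f_i\|_{\cL^{\infty}(\TT)}$ and multiplying the conclusion back by $\prod_{i=1}^k\|f_i\|_{\cL^{\infty}(\TT)}$) we may assume $f_1,\dots,f_k$ are $1$-bounded; write $L:=\prod_{i=1}^k\int_{\TT}f_i$ for the target value, and recall $\mathcal{A}_{\mathcal{P};N}(f_1,\dots,f_k)(x)=\frac1N\int_0^N G(y)\,dy$ with $G(y):=\prod_{i=1}^k f_i(x+P_i(y))$, so that $|G|\leq 1$.

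First I would fix a ratio $\lambda>1$ and set $N_j:=\lambda^j$ for $j\in\NN$. Proposition \ref{Prop_norm convergence} gives $\|\mathcal{A}_{\mathcal{P};N_j}(f_1,\dots,f_k)-L\|_{\cL^1(\mu)}\lesssim_{\mathcal{P}}N_j^{-C}=\lambda^{-jC}$, which is summable in $j$; hence $\sum_{j\geq1}|\mathcal{A}_{\mathcal{P};N_j}(f_1,\dots,f_k)(x)-L|$ has finite $\cL^1(\mu)$-norm, so the series converges for $\mu$-a.e.\ $x$, and in particular $\mathcal{A}_{\mathcal{P};N_j}(f_1,\dots,f_k)(x)\to L$ as $j\to\infty$ for $\mu$-a.e.\ $x$. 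Next I would fill the gaps between consecutive dyadic-type scales: for $N\in[N_j,N_{j+1}]$ the elementary identity $\mathcal{A}_{\mathcal{P};N}(x)-\mathcal{A}_{\mathcal{P};N_j}(x)=\big(\frac1N-\frac1{N_j}\big)\int_0^{N_j}G+\frac1N\int_{N_j}^N G$ together with $|G|\leq1$ yields $|\mathcal{A}_{\mathcal{P};N}(x)-\mathcal{A}_{\mathcal{P};N_j}(x)|\leq 2\,\frac{N_{j+1}-N_j}{N_j}=2(\lambda-1)$ uniformly in $x$. Combining the two estimates gives $\limsup_{N\to\infty}|\mathcal{A}_{\mathcal{P};N}(f_1,\dots,f_k)(x)-L|\leq 2(\lambda-1)$ for $\mu$-a.e.\ $x$.

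Finally I would let $\lambda$ decrease to $1$ along a countable sequence: applying the above with $\lambda=1+\tfrac1m$ produces, for each $m\in\NN$, a $\mu$-null set $E_m$ off which $\limsup_{N\to\infty}|\mathcal{A}_{\mathcal{P};N}(f_1,\dots,f_k)(x)-L|\leq 2/m$; then on the complement of the $\mu$-null set $\bigcup_m E_m$ the $\limsup$ is $\leq 2/m$ for every $m$, hence equal to $0$, which is exactly the assertion. I do not anticipate a serious obstacle: the only point requiring a little care is the gap-filling step, which is what forces us to pass to the limit $\lambda\downarrow1$ rather than work with a single lacunary scale, and the verification that the telescoping bound is genuinely uniform in $x$ (so that the exceptional null sets do not depend on the continuous parameter $N$). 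Everything else is a direct consequence of Proposition \ref{Prop_norm convergence} and Borel–Cantelli-type reasoning.
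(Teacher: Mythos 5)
Your proposal is correct and follows essentially the same route as the paper: normalize to 1-bounded inputs, apply Proposition \ref{Prop_norm convergence} along a lacunary sequence $(1+\tau)^l$ to get $\mu$-a.e.\ subsequential convergence, control the oscillation between consecutive scales by $O(\tau)$ uniformly in $x$, and then intersect over $\tau=1/m$. The only cosmetic difference is that you deduce a.e.\ subsequential convergence from the $\cL^1$-summability of $|\mathcal{A}_{\mathcal{P};N_j}-L|$ directly, whereas the paper runs Markov plus Borel--Cantelli on the sets $\mathcal{D}(\tau,l,\delta)$; these are the same argument in substance.
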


At first glance, the pointwise convergence result in Proposition \ref{Prop_AEConvergence} only holds for $\mu$-almost every $x$. However, since the conclusion is true for any s-Frostman measures $\mu$ with $s \in (1-\epsilon,1]$,
we can upgrade it to the $\cH^s$-almost everywhere convergence result by a standard reduction. Let's first see this implication.
\begin{proof} [Proof of Theorem \ref{Thm_Convergence} assuming Proposition \ref{Prop_AEConvergence}]
Given any $s \in (1-\epsilon,1]$ from Proposition \ref{Prop_AEConvergence}. Consider the following divergent set:
\[
\mathcal{D}:= \left\{ x \in \TT: \lim_{N \to\infty}\mathcal{A}_{\mathcal{P};N}(f_1,...,f_k)(x) \neq \prod_{i=1}^k \int_{\TT} f_i \right\} .
\]
It is not hard to see that $\mathcal{D}$ is a Borel set.
Suppose that $\dH(\mathcal{D})\geq s$, then by Frostman lemma, there exist an s-Frostman $\mu$ with $\text{spt}(\mu) \subset \mathcal{D}$. By Proposition \ref{Prop_AEConvergence}, we have for $\mu$-almost every $x$, we have
    \[
    \lim_{N \to \infty} \mathcal{A}_{\mathcal{P};N}(f_1,...,f_k)(x)=
    \prod_{i=1}^k \int_{\TT}f_i.
    \]
However, this contradicts our definition of the divergence set $\mathcal{D}$. Therefore, $\dH(\mathcal{D})< s$ for all $s \in (1-\epsilon,1]$. In particular, $\dH(\mathcal{D}) \leq 1-\epsilon$, and this finishes the proof.
\end{proof}

Finally, it remains to prove Proposition \ref{Prop_AEConvergence}.
\begin{proof} [Proof of Proposition \ref{Prop_AEConvergence}]
    Let $\epsilon>0$ be given so that Proposition \ref{Prop_norm convergence} is true. After normalization, we may assume that $f_1, \cdots, f_k$ are 1-bounded. Consider the following lacunary sequence $\{N(\tau,l):=(1+\tau)^l\}_{l \in \NN}$. We first claim that
    \begin{equation} \label{Application of B-C Lemma(III)}
        \mu\left( \bigcup_{m\in \NN} \left\{ x \in \TT: \lim_{l \to\infty}\mathcal{A}_{\mathcal{P};N(\frac{1}{m},l)}(f_1,...,f_k)(x) \neq \prod_{i=1}^k \int_{\TT} f_i \right\}\right)=0.
    \end{equation}
    Given any $\delta>0$, denote 
    \[
    \mathcal{D}(\tau,l,\delta):= \left\{ x \in \TT: \left|\mathcal{A}_{\mathcal{P};N(\tau,l)}(f_1,...,f_k)(x) - \prod_{i=1}^k \int_{\TT} f_i \right| > \delta \right\}.
    \]
By Proposition \ref{Prop_norm convergence}, there exists $C=C(\mathcal{P})>0$ such that
\[
\sum_{l=1}^{\infty} \mu \left(  \mathcal{D}(\tau,l,\delta) \right) \lesssim_{\mathcal{P}} \frac{1}{\delta} \cdot \sum_{l=1}^{\infty}  N_l^{-C} 
\]
\[
= \frac{1}{\delta} \cdot \sum_{l=1}^{\infty}  (1+\tau)^{-Cl}  < \infty,
\]
for any $\tau>0$. Therefore, by the Borel–Cantelli lemma, we have
\begin{equation} \label{Application of B-C Lemma(II)}
    \mu \left( \limsup_{l \to\infty} \mathcal{D}(\tau,l,\delta) \right)=0.
\end{equation}
Note that 
\[
\left\{ x \in \TT: \lim_{l \to\infty}\mathcal{A}_{\mathcal{P};N(\tau,l)}(f_1,...,f_k)(x) \neq \prod_{i=1}^k \int_{\TT} f_i \right\} \subset
\bigcup_{m \in \NN} \limsup_{l \to\infty} \mathcal{D}(\tau,l,\frac{1}{m}).
\]
Applying the union bound on \eqref{Application of B-C Lemma(II)} implies that for any $\tau>0$,
    \begin{equation} \label{Application of B-C Lemma}
        \mu\left(  \left\{ x \in \TT: \lim_{l \to\infty}\mathcal{A}_{\mathcal{P};N(\tau,l)}(f_1,...,f_k)(x) \neq \prod_{i=1}^k \int_{\TT} f_i \right\}  \right)=0.
    \end{equation} \\
Applying the union bound again on \eqref{Application of B-C Lemma} gives the proof of \eqref{Application of B-C Lemma(III)}.\\
Next, we claim that
    \begin{equation} \label{difference}
        \sup_{M \in [N(\tau,l),N(\tau,l+1)]}
    |\mathcal{A}_{\mathcal{P};N(\tau,l)}(x)-\mathcal{A}_{\mathcal{P};M}(x)|\lesssim \tau.
    \end{equation}
    for any $l \in \NN$. 
Note that for any $M \in [N(\tau,l),N(\tau,l+1)]$, we have
\[
|\mathcal{A}_{\mathcal{P};N(\tau,l)}(x)-\mathcal{A}_{\mathcal{P};M}(x)|
\]
\[
\leq  \left(\frac{M-N(\tau,l)}{M} \right) \cdot \left| \frac{1}{N(\tau,l)} \int_0^{N(\tau,l)} \prod_{\substack{j=1,\cdots, k}} f_j(x+P_j(y)) \,dy \right| 
\]
\[
+ \left( \frac{M-N(\tau,l)}{M} \right) \cdot \left| \frac{1}{M-N(\tau,l)} \int_{N(\tau,l)}^M \prod_{\substack{j=1,\cdots, k}} f_j(x+P_j(y)) \,dy \right| 
\]
\[
\lesssim \left( \frac{M-N(\tau,l)}{M} \right) 
\lesssim \left( \frac{N(\tau,l+1)-N(\tau,l)}{N(\tau,l)} \right) 
=\tau,
\]
as desired. Finally, combining \eqref{Application of B-C Lemma(III)} and \eqref{difference} shows that for $\mu$-almost every $x$, we have
    \[
    \lim_{N \to \infty} \mathcal{A}_{\mathcal{P};N}(f_1,...,f_k)(x)=
    \prod_{i=1}^k \int_{\TT}f_i,
    \]
    which finishes the proof.
\end{proof}

\bibliographystyle{alpha}
\bibliography{bibliography}

\end{document}